\newcounter{makeconstant}
\newenvironment{makeconstant}%
{\refstepcounter{makeconstant}}%
{}
\newcounter{nonumber}
\theoremstyle{plain}
\newtheorem{theorem}[equation]{Theorem}
\newtheorem{lemma}[equation]{Lemma}
\newtheorem{corollary}[equation]{Corollary}
\newtheorem{proposition}[equation]{Proposition}
\newtheorem{theoremn}[nonumber]{Theorem}
\theoremstyle{definition}
\newtheorem{example}[equation]{Example}
\newtheorem{remark}[equation]{Remark}
\newtheorem{notitle}[equation]{}
\numberwithin{equation}{section} 
\def\CC{\mathbf{C}}
\def\FF{\mathbf{F}}
\def\GG{\mathbf{G}}
\def\NN{\mathbf{N}}
\def\RR{\mathbf{R}}
\def\ZZ{\mathbf{Z}} 
\def\A{{\rm A}}
\def\C{{\rm C}}
\def\E{{\rm E}}
\def\F{{\rm F}}
\def\G{{\rm G}}
\def\H{{\rm H}}
\def\J{{\rm J}}
\def\L{{\rm L}}
\def\M{{\rm M}}
\def\N{{\rm N}}
\def\O{{\rm O}}
\def\P{{\rm P}}
\def\Q{{\rm Q}}
\def\R{{\rm R}}
\def\T{{\rm T}}
\def\U{{\rm U}}
\def\V{{\rm V}}
\def\Aa{\mathcal{A}}
\def\Ee{\mathscr{E}}
\def\Ff{\mathcal{F}}
\def\Gg{\mathcal{G}}
\def\Hh{\mathscr{H}}
\def\Ll{\mathcal{L}}
\def\Nn{\mathcal{N}}
\def\Pp{\mathcal{P}}
\def\Rr{\mathfrak{R}}
\def\Tt{\mathcal{T}}
\def\Vv{\mathcal{V}}
\def\Ww{\mathcal{W}}
\def\Zz{\mathscr{Z}}
\def\ee{\mathrm{e}}
\def\hh{\mathfrak{h}}
\def\vv{\mathrm{v}}
\def\so{\mathsf{o}}
\def\an{\mathsf{an}}
\def\d{\delta}
\def\e{\varepsilon}
\def\bee{{\boldsymbol\varepsilon}}
\def\g{\gamma}
\def\vphi{\varphi}
\def\ii{{\mathrm i}}
\def\l{\lambda}
\def\o{\mathfrak{o}}
\def\p{\mathfrak{p}}
\def\s{\sigma}
\def\w{\varpi}
\def\z{\zeta}
\def\La{\Lambda}
\def\om{\omega}
\def\({\left(}
\def\){\right)}
\def\>{\geqslant}
\def\<{\leqslant}
\def\le{\leqslant}
\def\ge{\geqslant}
\def\lvert{\left\vert}
\def\rvert{\right\vert}
\def\mid{:}
\def\Hom{\operatorname{Hom}}
\def\End{\operatorname{End}}
\def\Aut{\operatorname{Aut}}
\def\GL{\operatorname{GL}}
\def\Gal{\operatorname{Gal}}
\def\Im{\operatorname{Im}}
\def\Res{\operatorname{Res}}
\def\Ind{\operatorname{Ind}}
\def\cind{\operatorname{c-Ind}}
\def\Jord{\operatorname{Jord}}
\def\Mod{\operatorname{Mod-}}
\def\mrank{\operatorname{-rank}}
\def\Cent{\operatorname{Cent}}
\def\Red{\operatorname{Red}}
\def\IRed{\operatorname{IRed}}
\def\diag{\operatorname{diag}}
\def\st{\operatorname{st}}
\def\GSp{\operatorname{GSp}}
\def\Sp{\operatorname{Sp}}
\def\GO{\operatorname{GO}}
\def\SO{\operatorname{SO}}
\def\GSO{\operatorname{GSO}}
\def\SL{\operatorname{SL}}
\def\hG{{\widehat{\G}}}
\def\tGg{{\widetilde{\Gg}}}
\def\tLl{{\widetilde{\Ll}}}
\def\tPp{{\widetilde{\Pp}}}
\def\BJ{{\bf J}}
\def\ov#1{{\overline{#1}}}
\def\la{\langle}
\def\ra{\rangle}
\def\boI{{\boldsymbol{1}}}
\def\ie{i.e.\ }
\def\ignore#1{\relax}
\begin{document}

\title{On depth zero~$\L$-packets for classical groups}
\author{Jaime Lust}\address{(JL) Department of Mathematics, 
University of Iowa, Iowa City, IA 52242, USA}
\author{Shaun Stevens}\address{(SS) School of Mathematics,
University of East Anglia, Norwich NR4 7TJ, UK}
\date{\today}

\subjclass[2010]{22E50}
\thanks{SS was supported by EPSRC grants EP/G001480/1 and EP/H00534X/1}

\keywords{Classical group, depth zero representation, $\L$-packet}

\begin{abstract}
By computing reducibility points of parabolically induced representations, 
we construct, to within at most two unramified quadratic characters, the 
Langlands parameter of an arbitrary depth zero irreducible cuspidal 
representation~$\pi$ of a classical group (which may be not-quasi-split) over 
a nonarchimedean local field of odd residual characteristic. From this, we can explicitly
describe all the irreducible cuspidal representations in the union of one, two, 
or four~$L$-packets, containing~$\pi$. These results generalize the work of 
DeBacker--Reeder (in the case of classical groups) from regular to arbitrary tame 
Langlands parameters.
\end{abstract}

\maketitle

\section{Introduction}

The representation theory of~$p$-adic groups has largely been
motivated, over the last half-century, by the Langlands
conjectures, seeking an understanding of the absolute Galois (or Weil) groups of
local and global fields. Many parts of the local conjectures are now
theorems, notably for representations of~$\GL_n$~\cite{HT,H00},
of~$\SL_n$~\cite{GK,HS} and, more recently, of classical groups~\cite{Arthur,Mok,KMSW}.

At the same time as having local Langlands correspondences, one would
like to be able to use them to translate fine arithmetical data
between representations of~$p$-adic groups and representations of the
local Weil group. To this end, one seeks to make the correspondence
explicit/effective. For~$\GL_n$, this has been the subject of a series
of papers by Bushnell--Henniart~\cite{BHet1,BHet2,BHet3,BHmem}; for
other groups, work has concentrated on \emph{regular depth zero
irreducible cuspidal representations}~\cite{DeBR,Kal1,Kal2} and \emph{epipelagic
irreducible cuspidal representations}~\cite{Reeder,GReeder,Kal3,RY,Kal4,BHepi}, 
with the most general work by Kaletha~\cite{Kal5} on \emph{regular cuspidal representations}.

In this spirit, we look here at depth zero irreducible cuspidal representations of
a classical group~$\G$ -- by which we mean a symplectic, (special) orthogonal,
or unitary group, which may be non-quasi-split -- over a
nonarchimedean locally compact local field \emph{of odd residual
characteristic} (this is the only restriction on the field). When
these representations are also \emph{regular} (more precisely, the
corresponding Langlands parameter is \emph{tame regular semisimple in
general position}), these have already been considered, for more
general groups but with some conditions on the field, by
DeBacker--Reeder~\cite{DeBR} and Kaletha~\cite{Kal1,Kal2}; however,
our approach here is different, and allows us to treat all depth zero
irreducible cuspidal representations. Thus our work, and methods, are 
complementary to those of~\cite{Kal5}.

Given a Langlands parameter for~$\G$, the Langlands correspondence
should determine an \emph{$\L$-packet} of irreducible smooth complex
representations of~$\G$. These representations should share many
properties; for example, they should have all the
same~$\L$-functions, at least where these have been defined. Since, by
the results of Shahidi~\cite{Sh90},
poles of~$\L$-functions correspond to reducibility points of parabolic
induction, we detect representations in the same~$\L$-packet by
computing these reducibility points, and this does not require, for
example, genericity of the representation.

For now, we are not able completely to compute reducibility points,
but only up to \emph{twist by a certain unramified character} (see below for
more details). However, an even-dimensional irreducible tame
representation of the Weil group is symplectic if and only if this
unramified twist is orthogonal; thus, using the Langlands
correspondence, for example for symplectic groups, we can see which of
the twists must occur, and the only ambiguity is in the
reducibility points for quadratic characters of~$\GL_1$. In any case,
we are able to recover the irreducible cuspidal representations in the union of
either one, two or four~$\L$-packets.

This paper can be regarded as a first step in a programme to treat all
discrete series 
representations of classical groups -- see~\cite{BHS} for the case of
arbitrary irreducible cuspidal representations of symplectic groups. Depth zero is the
base case, since general irreducible cuspidal representations are built from a
``wild part'' and a depth zero part (see~\cite{S5}). In the depth zero
case, we avoid the complication of wild ramification; on the other
hand, the geometric complications arise essentially from the depth
zero part so that the results and techniques here already resolve many
difficulties for the general case.

\medskip

Now let us state our results more carefully; although we have
interpreted them above via the Langlands correspondence, they are in
fact results on the automorphic side. Let~$\F/\F_\so$ be an
extension of degree at most two of nonarchimedean local fields of odd
residual characteristic, and let~$\G$ be (the group of rational points
of) a symplectic, special orthogonal or unitary group over~$\F_\so$,
the connected component of the group of isometries of
an~$\F/\F_\so$-hermitian space; this group may be non-quasi-split. We
also write~$\Ww_\F$ for the Weil group of~$\F$ and~$\hG$ for the
complex dual group of~$\G$, acting naturally on a vector space of
dimension~$N_\hG$.

In their classification of discrete series representations of
(quasi-split)~$p$-adic classical groups~\cite{MT,Mo1,Mo4}, M\oe
glin--Tadi\'c use the notion of a \emph{Jordan set} attached to an
irreducible discrete series representation of~$\G$. For an
irreducible cuspidal representation~$\pi$ of~$\G$, this can be described via
the \emph{reducibility set}~$\Red(\pi)$ as follows. 

We denote by~$\Aa^\s(\F)$ the set of (equivalence
classes of) self-dual irreducible cuspidal representations of
some~$\GL_n(\F)$ (see Section~\ref{S.cuspidals}).
For~$\rho\in\Aa^\s(\F)$ there is at most one real 
number~$s=s_\pi(\rho)\ge 0$ such that the normalized parabolically
induced representation~$\Ind \rho|\det(\cdot)|_\F^s\otimes\pi$ is reducible,
where~$\lvert\,\cdot\,\rvert_\F$ is the normalized absolute value
on~$\F$; when there is no such real number (which can happen only for
even-dimensional special orthogonal groups and~$\rho$ a quadratic
character of~$\GL_1(\F)$), we set~$s_\pi(\rho)=0$. Then
\[
\Red(\pi)\ =\ 
\left\{(\rho,m)\mid \rho\in\Aa^\s(\F),
~m\in\NN\text{ with }2s_\pi(\rho)=m+1\right\}.
\]
M\oe glin proves in~\cite{Mo2} that, again for~$\pi$ irreducible cuspidal, the
Jordan set is
\begin{eqnarray}
\Jord(\pi)& =& 
\left\{(\rho,m)\mid \rho\in\Aa^\s(\F),
~m\in\NN\text{ with }2s_\pi(\rho)-(m+1)\in2\ZZ_{\ge 0}\right\} \notag\\
&=& \left\{(\rho,m)\mid (\rho,m_\rho)\in\Red(\pi)\text{ and
  }m_\rho-m\in2\ZZ_{\ge 0}\right\}, \label{eqn:noholes}
\end{eqnarray}
so that~$\Red(\pi)$ is the set of maximal elements of~$\Jord(\pi)$.

It is expected (and in at least when~$G$ is quasi-split, known -- see, for example,~\cite{Mo5}) that the Jordan set should
precisely predict the Langlands
parameter~$\vphi:\Ww_\F\times\SL_2(\CC)\to\hG\rtimes\Ww_\F$ whose~$\L$-packet~$\Pi_\vphi$
contains~$\pi$, by
\[
\vphi\ =\ \bigoplus_{(\rho,m)\in\Jord(\pi)} \vphi_\rho\otimes\st_m,
\]
where~$\vphi_\rho$ is the (irreducible) representation of the Weil
group~$\Ww_\F$ corresponding to~$\rho$ via the Langlands
correspondence for general linear groups, and~$\st_m$ is
the~$m$-dimensional irreducible representation of~$\SL_2(\CC)$. In
particular, writing~$n_\rho$ for the unique natural number such
that~$\rho$ is a representation of~$\GL_{n_\rho}(\F)$, we should have
\begin{equation}\label{eqn:expected0}
\sum_{(\rho,m)\in\Jord(\pi)}mn_\rho\ =\ N_{\hG}.
\end{equation}
By~\eqref{eqn:noholes}, this equality is equivalent to
\begin{equation}\label{eqn:expected}
\sum_{\rho\in\Aa^\s(\F)}\left\lfloor\(s_\pi(\rho)\)^2\right\rfloor n_\rho\ =\ N_{\hG},
\end{equation}
where~$\lfloor x\rfloor$ denotes the greatest integer not exceeding~$x$.
Note that almost all terms in this sum are zero since~$s_\rho(\pi)<1$
for all but finitely many~$\rho\in\Aa^\s(\F)$.

\medskip

Suppose now that the representation~$\pi$ is of depth zero; equivalently, 
the Langlands parameter is \emph{tame} (\ie~trivial on restriction to the
wild inertia subgroup of~$\Ww_\F$). 
For clarity of exposition, we specialize temporarily to the case of a 
symplectic group~$\G$, in which case~$\hG$ is a special orthogonal group 
with~$N_{\hG}$ odd. On the other hand, by a result of Blondel~\cite{Bl}, there are 
self-dual irreducible cuspidal representations of~$\GL_n(\F)$ only for~$n$ 
even or~$n=1$; in the latter case, we get the pair of unramified 
characters of order dividing two, and the pair of (tamely) ramified
quadratic characters. Since~$N_{\hG}$ is odd, equation~\eqref{eqn:expected0} implies 
that there is exactly one pair for which the multiplicities of the two characters 
in~$\vphi|_{\Ww_\F}$ have the same parity; we denote by~$\vphi'$ the Langlands 
parameter obtained from~$\vphi$ by exchanging the multiplicities of the two characters
in this pair. (In particular, we have~$\vphi'=\vphi$ when the multiplicities 
are equal.)

\begin{theoremn}
\begin{enumerate}
\item Given a tame Langlands parameter~$\vphi$ for a symplectic group as above, there is an explicit 
description of the cuspidal representations in the union 
of~$\L$-packets~$\Pi_\vphi\cup\Pi_{\vphi'}$.
\item Conversely, given a depth-zero cuspidal irreducible representation~$\pi$ 
of a symplectic group as above, there is an explicit description of the pair~$\{\vphi,\vphi'\}$ of 
tame Langlands parameters, such that~$\pi\in\Pi_\vphi\cup\Pi_{\vphi'}$.
\end{enumerate}
\end{theoremn}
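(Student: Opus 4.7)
The plan is to work on the automorphic side, computing the reducibility set $\Red(\pi)$ for each depth zero irreducible cuspidal $\pi$, extracting the Jordan set $\Jord(\pi)$ via~\eqref{eqn:noholes}, and then assembling the Langlands parameter up to a controlled ambiguity. The starting point is the Moy--Prasad description: any such $\pi$ is compactly induced from a maximal parahoric subgroup $\Pp$ of $\G$, inflating a cuspidal representation $\tau$ of the finite reductive quotient $\Pp/\Pp^+$, which is a product of (possibly twisted) finite classical groups. Lusztig's classification parametrizes such $\tau$ by semisimple conjugacy classes in the dual finite group together with cuspidal symbol data, giving the explicit combinatorial datum on which both statements of the theorem will rest.

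To compute $s_\pi(\rho)$ for each $\rho\in\Aa^\s(\F)$, I would construct a cover of $\tau\otimes\rho$ in a suitable self-dual Levi subgroup, following Bushnell--Kutzko and Blondel, and read the reducibility point off the parameters of the two-generator affine Hecke algebra associated to the cover. These parameters are determined by the parahoric and finite-group data: for $\rho$ a character of $\GL_1(\F)$ the relevant input is the restriction of the semisimple parameter of $\tau$ to a corresponding factor, while for $n_\rho>1$ one tracks how $\rho$ glues together different parahoric factors. Assembling the resulting values produces $\Red(\pi)$ explicitly in terms of the parametrizing datum; then~\eqref{eqn:noholes} fills in $\Jord(\pi)$, and the candidate parameter is $\vphi=\bigoplus_{(\rho,m)\in\Jord(\pi)}\vphi_\rho\otimes\st_m$. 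For $n_\rho>1$, the symplectic or orthogonal type of each tame $\vphi_\rho$ is forced by the requirement that $\vphi$ land in $\hG\rtimes\Ww_\F$, so only the four self-dual characters of $\GL_1(\F)$ can contribute genuine ambiguity; since $N_{\hG}$ is odd, parity considerations deriving from Blondel's description of self-dual cuspidals force exactly one such pair to carry multiplicities of the same parity, and exchanging those multiplicities yields the second parameter $\vphi'$.

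Part~(i) then follows by running this recipe in reverse: given a tame $\vphi$, enumerate the Moy--Prasad data $(\Pp,\tau)$ whose computed Jordan set matches that of $\vphi$ up to the quadratic-character swap, and read off the resulting $\L$-packet(s). The main technical obstacle will be the Hecke algebra parameter computation, namely matching the fine numerical invariants of $\tau$ (residue field degrees, cuspidal symbol parameters, twisted versus split types, outer-form signs) to the reducibility exponents with sufficient precision to determine $s_\pi(\rho)$ as a function of the unramified-twist class of $\rho$, and in particular to confirm that for $n_\rho>1$ an unramified quadratic twist of $\rho$ produces a distinct value of $s_\pi$, so that the symplectic/orthogonal dichotomy genuinely removes the ambiguity. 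The ambiguity which irreducibly remains, that for quadratic characters of $\GL_1(\F)$, is precisely what is packaged in the pair $\{\vphi,\vphi'\}$ of the theorem.
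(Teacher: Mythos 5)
Your proposal follows essentially the same route as the paper: compact induction from a maximal parahoric with Lusztig's parametrization of the finite cuspidal $\tau$, computation of the reducibility points via the Miyauchi--Stevens covers and Blondel's translation into the two Hecke algebra parameters $f_1,f_2$, verification of the counting identity~\eqref{eqn:expected}, and resolution of all ambiguity except for the quadratic characters of $\GL_1(\F)$ by the symplectic/orthogonal parity of tame $\vphi_\rho$. The one point your outline underestimates is that the finite reductive quotients ($\Sp$, $\SO^\pm$) have \emph{disconnected} centre, so the finite Hecke algebra parameters cannot simply be read off Lusztig's Jordan decomposition; deciding when the parameter degenerates to $f_i=0$ requires knowing when $\tilde\tau$ is normalized by the relevant Weyl element and when a cuspidal of $\SO_{2N}^{\pm}$ extends to $\O_{2N}^{\pm}$, which is the content of Lemmas~\ref{lem:paramtau}--\ref{lem:wnormrhotildeSp} and Proposition~\ref{prop:orthogonalextension} and constitutes a large part of the paper's actual work.
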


We remark that, in the situation of \emph{regular} depth zero
irreducible cuspidal representations, the multiplicities of the characters 
are all at most one, so that~$\vphi'=\vphi$; thus we recover the description 
of the representations in an~$L$-packet consisting solely of cuspidal representations 
from~\cite{DeBR} in this case.

\medskip

We return to the case of depth zero representations of a general classical group~$\G$ 
and describe the result here, which is a reinterpretation of the theorem above in terms 
of the set~$\Red(\pi)$. More precisely,
denote by~$[\rho]$ the \emph{inertial equivalence class}
of~$\rho\in\Aa^\s(\F)$, that is, the set of unramified twists of~$\rho$; note
that~$[\rho]\cap\Aa^\s(\F)=\{\rho,\rho'\}$ consists of exactly two
(inequivalent) representations. We have~$\rho'=\rho\chi$, for~$\chi$ an unramified 
character with~$\rho\chi^2=\rho$; since~$\rho$ has depth zero, the 
character~$\chi$ has order~$2n_\rho$. 

Here, we compute the \emph{inertial reducibility multiset}
\[
\IRed(\pi)\ =\ \{\!\{([\rho],m)\mid(\rho,m)\in\Red(\pi)\}\!\}.
\]
This is often in fact a set: since~$\pi$ has depth zero, the only inertial 
classes~$[\rho]$ which can occur with multiplicity are the quadratic characters 
of~$\GL_1(\F)$. Indeed, for~$\rho\in\Aa^\s(\F)$ of depth zero with~$n_\rho>1$  
and~$\rho'$ its self-dual unramified twist, the exterior square~$L$-function of exactly one 
of~$\rho,\rho'$ has a pole at~$s=0$, while for the other representation it is the 
symmetric square~$L$-function which has a pole (the same comments apply to the Asai 
and twisted Asai~$L$-functions when~$\F/\F_\so$ is quadratic); thus the parity of~$m$, 
such that~$(\rho,m)\in\Red(\pi)$ should be independent of~$\pi$ (\ie depend only on~$\rho$), 
and the parity will be the opposite of that for~$\rho'$. Moreover, this means that by 
computing~$\IRed(\pi)$, we in fact know all elements of~$\Red(\pi)$ apart from those 
associated to characters of~$\GL_1(\F)$ of order at most two, where an ambiguity may remain.

The results we prove here can be described by the following:

\begin{theoremn} Let~$\pi$ be a depth zero irreducible cuspidal representation
  of~$\G$.
\begin{enumerate}
\item Equation~\eqref{eqn:expected} holds.
\item The multiset~$\IRed(\pi)$ can be computed explicitly from the
  local data defining~$\pi$ as a compactly induced representation. 
\item The set of irreducible cuspidal representations~$\pi'$ of~$\G$
  with~$\IRed(\pi)=\IRed(\pi')$ can be described explicitly in terms of
  the local data defining~$\pi$. Moreover, the number of such representations
  is the expected number in one, two or four~$L$-packets, this number depending
  again on the local data.
\end{enumerate}
\end{theoremn}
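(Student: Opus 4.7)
The plan is to exploit the structure of depth zero irreducible cuspidal representations of~$\G$ as compactly induced representations $\cind_K^\G\sigma$, where $K$ is the normalizer in~$\G$ of a maximal parahoric subgroup $\G_x$ attached to a vertex~$x$ in the Bruhat--Tits building, and $\sigma$ is the inflation of an irreducible cuspidal representation of the reductive quotient $\Mm_x=\G_x/\G_x^+$. Since~$\G$ is classical, $\Mm_x$ is a product of finite classical groups indexed by the Witt summands at~$x$, and $\sigma=\sigma_1\otimes\cdots\otimes\sigma_r$ decomposes accordingly. By the Lusztig classification, each~$\sigma_i$ corresponds to a pair consisting of a semisimple conjugacy class in a dual finite classical group together with a cuspidal unipotent representation of its centralizer; this is the ``local data defining~$\pi$'' referred to in the statement.

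For parts~(i) and~(ii), I would compute $s_\pi(\rho)$ for each $\rho\in\Aa^\s(\F)$ via Bushnell--Kutzko type theory and the machinery of covers for classical groups. For each inertial class~$[\rho]$ there is a cover of $\sigma\otimes\Lambda_\rho$, where $\Lambda_\rho$ is a suitable type for~$[\rho]$ in a relevant $\GL$-factor of a Levi subgroup of~$\G$, whose Hecke algebra is a rank one affine Hecke algebra of type~$BC$. The two parameters of this Hecke algebra yield the reducibility points attached to~$\rho$ and to its self-dual unramified twist~$\rho'$; these parameters can be read off from the Lusztig data of the~$\sigma_i$, controlled by the multiplicities of the (Frobenius-compatible) unramified lift of~$\rho$ in the semisimple classes at those Witt summands where~$\rho$ actually contributes, together with an orientation coming from whether the associated centralizer factor is of symplectic or orthogonal type. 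Summing these contributions over $[\rho]$ and matching them against the rank count of the pieces of $\Mm_x$ yields~\eqref{eqn:expected}, proving~(i); the explicit readout of the parameters themselves gives~(ii).

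For part~(iii), the condition $\IRed(\pi')=\IRed(\pi)$ forces~$\pi'$ to come from a vertex with the same Witt-summand shape, on~$\G$ or on a pure inner form of it, with cuspidal representations~$\sigma_i'$ of the same finite classical groups lying in Lusztig series whose semisimple classes agree with those of the~$\sigma_i$ up to the Frobenius twist that interchanges~$\rho$ with~$\rho'$. The remaining freedom is parameterized by the choice of pure inner form or vertex, and, for each contributing inertial class, the choice of a cuspidal unipotent representation in the relevant centralizer. A combinatorial count of these choices matches the size of $\Pi_\vphi$ predicted by the Pontryagin dual of the component group of the associated Langlands parameter~$\vphi$; the factor~$2$ or~$4$ arises precisely when the ambiguity over the self-dual characters of~$\GL_1(\F)$, discussed in the introduction, permits several~$\vphi$ to be consistent with the same~$\IRed(\pi)$.

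The main obstacle will be~(ii): matching the Hecke algebra parameters produced by the cover machinery with the reducibility points attached to the Langlands parameter. This requires a uniform, case-independent identification of the cuspidal unipotent data of the finite classical groups appearing in~$\Mm_x$ with the local pieces of~$\vphi$, and in particular a careful treatment of the orthogonal/symplectic dichotomy for self-dual~$\rho$ of dimension greater than one, where the exterior/symmetric square $L$-function argument sketched in the introduction must be made precise at the level of finite-group data. Once this identification is in hand, parts~(i) and~(iii) follow from relatively short combinatorial and counting arguments.
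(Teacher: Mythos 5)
Your overall strategy coincides with the paper's: write $\pi=\cind\l_\pi$ from the normalizer of a maximal parahoric, use the Miyauchi--Stevens covers and Blondel's result to reduce the reducibility points $\{s_\pi(\rho),s_\pi(\rho')\}$ to the two parameters $f_1,f_2$ of a generic Hecke algebra on an infinite dihedral group, read those parameters off from the Lusztig data (the integers $m_P^{(i)}$ attached to the characteristic polynomials of the semisimple parameters of $\tau_\pi^{(1)},\tau_\pi^{(2)}$), sum to get~\eqref{eqn:expected}, and then produce all $\pi'$ with the same $\IRed$ by exchanging the data $m_P^{(1)}\leftrightarrow m_P^{(2)}$ subject to parity constraints. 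That is exactly the architecture of Sections~\ref{S.covers}--\ref{S.examples}.

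However, there is a genuine gap at the step you describe as ``the parameters can be read off from the Lusztig data.'' The formula of~\cite[Theorem~8.6]{L} that computes $q^{f_i}$ as a ratio of dimensions via the Jordan decomposition applies to connected reductive groups with \emph{connected centre}, whereas the reductive quotients here are products of finite symplectic and special orthogonal groups. One must embed these into $\GSp$ and $\GSO$, and then determine (a) when an irreducible cuspidal $\tau$ of the small group underlies a cuspidal $\tilde\tau$ of the similitude group that is still normalized by the relevant Weyl element $w$ --- if $w$ normalizes $\tau$ but not $\tilde\tau$, the parameter is forced to be $f_\tau=0$ rather than the value predicted by the connected-centre tables (Lemma~\ref{lem:paramtau}); and (b) for even special orthogonal groups, when the cuspidal of $\SO_{2N}^\pm$ extends to the full orthogonal group, since otherwise the affine Hecke algebra parameter degenerates to $0$ as well (case~(i) in Section~\ref{S.reduction}). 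The answers (Lemma~\ref{lem:wnormrhotildeSp}, Proposition~\ref{prop:orthogonalextension}, Corollary~\ref{cor:wnormrhotildeSO}) depend on whether $\pm1$ are eigenvalues of the semisimple parameter and are not visible from the naive orientation datum you invoke; Proposition~\ref{prop:orthogonalextension} in particular requires an argument in the Clifford group and is the technical heart of the paper. Without this analysis your computed $f_i$, and hence the verification of~\eqref{eqn:expected} and the counts in part~(iii), would be wrong precisely in the cases $P(X)=X\pm1$ for symplectic, ramified unitary and orthogonal groups. Relatedly, in~(iii) the relevant count is of \emph{cuspidal} members of the packet(s), namely $2^{e(\pi)-e_0(\pi)}$ rather than the full component-group count, the parametrizing maps $\bee$ are constrained by a parity condition on $\Q_0(\pi)$, and for even orthogonal groups one can obtain only \emph{half} the expected number (Example~\ref{ex:ortho}), which must be explained via extension to the full orthogonal group; your proposal does not account for these phenomena. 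Finally, pure inner forms play no role in part~(iii) as stated: the group $\G$ is fixed and only the conjugacy class of maximal parahoric and the finite cuspidal data vary.
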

For a discussion of the expected number of irreducible cuspidal representations in an~$\L$-packet,
see the beginning of Section~\ref{S.examples}. One also needs to take care with this in 
the case of even orthogonal groups (see Example~\ref{ex:ortho}).

The computation of reducibility points required for this theorem is
achieved using Bushnell--Kutzko's theory of covers~\cite{BK1}, together with
results of Blondel~\cite{Blb}, which translate the problem to the
computation of parameters in the Hecke algebra of a cover (see
Sections~\ref{S.covers}--\ref{S.reduction}). These covers were
constructed in~\cite{MiS}, and the parameters can be computed
using Morris's explicit description of depth zero irreducible cuspidal
representations (see Section~\ref{S.cuspidals}), together with results
of Lusztig on representations of finite reductive groups (see
Section~\ref{S.computation}). However, care must be taken since the
finite reductive groups which occur do not, in general, have connected
centre. There is particular difficulty for (even-dimensional) special
orthogonal groups and the results we obtain here may be of independent
interest; in particular, we compute when an irreducible cuspidal
representation of an even-dimensional special orthogonal group over a
finite field extends to the full orthogonal group (see
Proposition~\ref{prop:orthogonalextension}), generalizing results of
Lusztig and Waldspurger. All these ingredients are then put together
to prove the theorem in Sections~\ref{S.synthesis}
and~\ref{S.examples}; the latter includes some illustrative examples.

\subsection*{Acknowledgements}
The research of SS was supported by EPSRC grants EP/G001480/1 and EP/H00534X/1. He
would like to thank Meinolf Geck, and particularly Marc Cabanes
for his patience in explaining Deligne--Lusztig theory -- any
remaining mistakes are entirely the authors'. He would also like to
thank Corinne Blondel and Guy Henniart for their patience in waiting
for this paper to get written up.

\section{Notation and background}
\label{S.notation}

We fix some notation for the rest of the paper (with the exception of
Section~\ref{S.computation}, whose notation is
independent). Let~$\F_\so$ be a locally compact nonarchimedean local
field of \emph{odd} residual characteristic~$p$, and let~$\F/\F_\so$
be an extension of degree at most~$2$. We write~$\s:\l\mapsto\ov\l$
for the generator of the Galois group of~$\F/\F_\so$. For~$\E$ any
field containing~$\F_\so$, we write~$\o_\E$ for its ring of
integers,~$\p_\E$ for its maximal ideal, and~$k_\E=\o_\E/\p_\E$ for
its residue field, of cardinality~$q_\E$; in particular we
abbreviate~$q=q_{\F}$. We also abbreviate~$\o_\so=\o_{\F_{\so}}$
etc. We also fix a uniformizer~$\w_\F$ of~$\F$ such
that~$\ov{\w_\F}=-\w_\F$ if~$F/F_\so$ is quadratic ramified,
and~$\ov{\w_\F}=\w_\F$ otherwise, and
write~$\N_{\F/\F_\so}:\F^\times\to\F_\so^\times$ for the norm map,
which is given by~$\l\mapsto\l\ov\l$ if~$[\F:\F_\so]=2$.

We fix a sign~$\e=\pm 1$, and let~$(\V,h)$ be a
nondegenerate~$\F/\F_\so$-$\e$-hermitian space of Witt index~$N$ and
dimension~$2N+N^\an$; thus~$\V$ is an~$\F$-vector space, the form~$h$
satisfies
\[
h(\l v,\mu w)= \l\ov\mu h(v,w) = \e\l\ov\mu \overline{h(w,v)},
\qquad\hbox{for }v,w\in\V,\ \l,\mu\in\F,
\]
and we have a Witt decomposition
\[
\V=\V^-\oplus\V^\an\oplus\V^+,
\] 
with~$\dim_\F\V^\pm=N$ and~$\dim_\F\V^\an=N^\an$, such that the
restriction of~$h$ to~$\V^\pm$ is totally isotropic, while its
restriction~$h^\an$ to~$\V^\an$ is anisotropic. We denote
by~$\H=\H^-\oplus\H^+$ the hyperbolic plane; that is,~$\H^\pm$ is
a~$1$-dimensional~$\F$-vector space with basis~$\ee_\pm$ and~$\H$ is
equipped with the form~$h_\H$ given by
\[
h_\H(\l_-\ee_- + \l_+\ee_+ , \mu_-\ee_- + \mu_+\ee_+ ) = \l_-\ov{\mu_+} + \e\l_+\ov{\mu_-},
\qquad\hbox{for }\l_\pm,\mu_\pm\in\F.
\]
Thus the restriction of~$h$ to~$\V^-\oplus\V^+$ is (isometric to) an
orthogonal direct sum of~$N$ copies of~$\H$. We choose a Witt basis
for~$\V$, that is:~$\ee_1^+,\ldots,\ee_N^+$ a basis for~$\V^+$, with
dual basis~$\ee_1^-,\ldots,\ee_N^-$ for~$\V^-$,
and~$\ee_1^\an,\ldots,\ee_{N^\an}^\an$ a basis~$\V^\an$ with respect
to which~$h^\an$ has diagonal Gram matrix. We order this basis
\[
\ee_N^-,\ldots,\ee_1^-,\ee_1^\an,\ldots,\ee_{N^\an}^\an,\ee_1^+,\ldots,\ee_N^+.
\]

For~$n\ge 0$, we denote by~$n\H$ the orthogonal direct sum of~$n$
copies of~$\H$, and put
\[
\V_n=\V\oplus n\H
\] 
with the form~$h_n=h\oplus h_\H\oplus\cdots\oplus h_\H$, so that the
decomposition above is orthogonal and we have a Witt decomposition
\[
\V_n=\V_n^-\oplus\V^\an\oplus\V_n^+,\qquad\hbox{with }\V_n^\pm=\V^\pm\oplus n\H^\pm.
\]
Thus~$\(\V_n\mid n\ge 0\)$ is a Witt tower
over~$\V_0=\V$. Writing~$\ee^\pm_{N+i}$ for the image in~$\V_n$
of~$\ee_\pm$ in the~$i^{\rm th}$ copy of~$\H$, the space~$\V_n$ has
the ordered Witt basis
\[
\ee_{N+n}^-,\ldots,\ee_1^-,\ee_1^\an,\ldots,\ee_{N^\an}^\an,\ee_1^+,\ldots,\ee_{N+n}^+.
\]

For~$n\ge 0$, we put~$\G_n^+=\U(\V_n)$, the group of~$\F_\so$-rational
points of the reductive algebraic group over~$\F_\so$ determined
by~$(\V_n,h_n)$, so that
\[
\G_n^+=\{g\in\Aut_\F(\V_n)\mid h_n(gv,gw)=h_n(v,w)\hbox{ for all }v,w,\in\V\};
\]
thus~$\G_n^+$ is (the group of points of) a unitary, symplectic or
(full) orthogonal group. We also put~$\G_n=\U(\V_n)^\so$, the group
of~$\F_\so$-rational points of the connected component, so that
\[
\G_n=\{g\in\G_n^+\mid \N_{\F/\F_\so}\textstyle\det_\F(g)=1\};
\]
thus~$\G_n=\G_n^+$ unless~$\G_n^+$ is an orthogonal group
(so~$\F=\F_\so$ and~$\e=1$), in which case~$\G_n$ is the special
orthogonal group, of index~$2$ in~$\G_n^+$. 
We will abbreviate~$\G=\G_0$ and~$\G^+=\G_0^+$. 

The stabilizer in~$\G_n$ of the decomposition
\[
\V_n = n\H^-\oplus \V\oplus n\H^+
\]
is a Levi subgroup~$\M_n$ of~$\G_n$, which is standard with respect to 
the chosen Witt basis, and we have an isomorphism~$\M_n\simeq \GL_n(\F)\times \G$ 
given by~$g\mapsto (g|_{n\H^-},g|_{\V})$; moreover, the
stabilizer of the subspace~$n\H^-$ is a standard parabolic
subgroup~$\P_n$ of~$\G_n$, with Levi component~$\M_n$. Thus, writing
elements of~$\G_n$ as matrices with respect to the Witt basis, the
group~$\P_n$ is block upper triangular and~$\M_n$ is block diagonal.

\medskip

We end this section with a description of the maximal parahoric
subgroups of~$\G$ and of their reductive quotients (see also~\cite{M0}).
For~$\L$
an~$\o_\F$-lattice in~$\V$, we denote by~$\L^\#$ the dual lattice
\[
\L^\#=\{v\in\L\mid h(v,\L)\subseteq\p_\F\}.
\]
We say that~$\L$ is \emph{almost self-dual} if
\[
\L\supseteq \L^\# \supseteq\p_\F\L;
\]
in that case, the stabilizer~$\J=\J_\L$ in~$\G$ of~$\L$ is a maximal
compact subgroup of~$\G$, and every maximal compact subgroup arises in
this way for a unique self-dual lattice~$\L$. We write~$\J^1$ for the
pro-unipotent radical of~$\J$, that is the subgroup consisting of
those elements~$g$ which induce the identity map on the~$k_\F$-vector
spaces~$\ov\V_{(1)}:=\L/\L^\#$ and~$\ov\V_{(2)}:=\L^\#/\p_\F\L$.

The form~$h$ induces nondegenerate~$k_\F/k_\so$-forms on~$\ov\V_{(1)}$
and~$\ov\V_{(2)}$ by
\[
\begin{cases}
h_{\ov\V}(v+\L^\#,w+\L^\#) := h(v,w)+\p_\F, &\hbox{for }v,w\in\L,\\[5pt]
h_{\ov\V^\#}(v'+\p_\F\L,w'+\p_\F\L) := \w_\F^{-1}h(v',w')+\p_F, &\hbox{for }v',w'\in\L^\#.
\end{cases}
\]
The form~$h_{\ov\V_{(1)}}$ is~$\e$-hermitian, while the form~$h_{\ov\V_{(2)}}$
is~$(-\e)$-hermitian if~$\F/\F_\so$ is quadratic ramified,
and~$\e$-hermitian otherwise, by our choice of uniformizer. Thus we
get an induced map
\[
\J \to \U(\ov\V_{(1)}) \times \U(\ov\V_{(2)}),
\]
with kernel~$\J^1$, and hence the quotient~$\Gg=\Gg_\L=\J/\J^1$ is
naturally a subgroup of the finite reductive group~$\U(\ov\V_{(1)}) \times
\U(\ov\V_{(2)})$. In fact,~$\Gg$ identifies with the subgroup
\[
\left\{(g_1,g_2)\in \U(\ov\V_{(1)}) \times \U(\ov\V_{(2)}) \mid \N_{k_\F/k_\so}(\textstyle\det_{k_\F}(g_1)\textstyle\det_{k_\F}(g_2))=1\right\},
\]
which has connected component~$\Gg^\so=\U(\ov\V_{(1)})^\so \times
\U(\ov\V_{(2)})^\so$. We denote by~$\J^\so=\J^\so_\L$ the inverse image
in~$\J$ of~$\Gg^\so$; this is a parahoric subgroup of~$\G$ and~$\J$ is
its normalizer in~$\G$. It is \emph{not} always a maximal parahoric
subgroup of~$G$ (it is so if and only if neither factor~$\U(\ov\V_{(i)})^\so$ 
is a two-dimensional special orthogonal group)
but every maximal parahoric subgroup does arise in
this way. If either~$\F/\F_\so$ is quadratic ramified and the
orthogonal space among~$\ov\V_{(1)},\ov\V_{(2)}$ is non-zero,
or~$\F=\F_\so$,~$\e=1$ and both~$\ov\V_{(1)},\ov\V_{(2)}$ are non-zero,
then~$\J^\so$ has index~$2$ in~$\J$; otherwise we have~$\J=\J^\so$.

\medskip

Restricting first to the case of the hermitian space~$(\V^\an,h^\an)$,
there is a unique almost self-dual lattice~$\L_\an$ in~$\V^\an$, and
the corresponding group~$\G_\an=\U(\V^\an)^\so$ is compact and
normalizes the unique (maximal) parahoric
subgroup~$\J_\an^\so=\J_{\L_\an}^\so$, with connected
component~$\Gg_\an^\so$. We set
\[
\ov\V^\an_{(1)}=\L_\an/\L_\an^\#,\qquad \ov\V^\an_{(2)}=\L_\an^\#/\p_\F\L_\an,
\]
and~$N^\an_i=\dim_{k_\F}\ov\V^\an_{(i)}$, so
that~$N^\an=N^\an_1+N^\an_2$. Then we have the following possibilities
for~$\Gg_\an^\so=\U(\ov\V^\an_{(1)})^\so \times \U(\ov\V^\an_{(2)})^\so$,
where we write~$\SO(M_1,M_2,k_\F)$ for the special orthogonal group
with form of Witt index~$M_2$ and anisotropic part of
dimension~$M_1-M_2\le 2$:
\begin{itemize}
\item If~$\F=\F_\so$ and~$\e=-1$ then~$N^\an=0$ so~$\Gg_\an^\so$ is trivial.
\item If~$\F=\F_\so$ and~$\e=1$ then~$\Gg_\an^\so\simeq\SO(N^\an_1,0,k_\F)\times\SO(N^\an_2,0,k_\F)$, with~$N^\an_i\le 2$.
\item If~$\F/\F_\so$ is unramified quadratic, then~$\Gg_\an^\so \simeq\U(N^\an_1,k_\F/k_{\F_\so})\times\U(N^\an_2,k_\F/k_{\F_\so})$, the product of two unitary groups with~$N^\an_i\le 1$.
\item If~$\F/\F_\so$ is ramified quadratic then
\[
\Gg_\an^\so\simeq\begin{cases}
\SO(N^\an_1,0,k_\F)&\text{ if }\e=+1,\\
\SO(N^\an_2,0,k_\F)&\text{ if }\e=-1,
\end{cases}
\]
with~$N_i^\an\le 2$ and only one~$N_i^\an$ non-zero.
\end{itemize}

\medskip

Returning to the general case of the space~$(\V,h)$, the
\emph{standard} almost self-dual lattices are those of the following
form: for~$0\le N_1,N_2$ with~$N_1+N_2=N$, set
\[
\L_{N_1,N_2}:=\o_\F\ee_N^-\oplus\cdots\oplus\o_\F\ee_1^-\oplus\L^\an\oplus\o_\F\ee_1^+\oplus\cdots\oplus\o_\F\ee_{N_1}^+\oplus\p_\F\ee_{N_1+1}^+\oplus\cdots\oplus\p_\F\ee_N^+,
\]
where~$\L^\an$ is the unique almost self-dual lattice in~$\V^\an$. We
write~$\J_{N_1,N_2}$ 
for the stabilizer of~$\L_{N_1,N_2}$ and~$\J_{N_1,N_2}^\so$ for the corresponding
parahoric subgroup. Every almost self-dual lattice has the form~$g\L_{N_1,N_2}$, for
some~$g\in\G$ and a unique standard lattice~$\L_{N_1,N_2}$; thus every maximal
compact (respectively, maximal parahoric) subgroup is conjugate to a
unique standard one~$\J_{N_1,N_2}$ (respectively,~$\J_{N_1,N_2}^\so$). The choice of
Witt basis and the forms on~$\ov\V_{(1)},\ov\V_{(2)}$ then give us the
following identifications for the connected reductive
quotients~$\Gg_{N_1,N_2}^\so=\J_{N_1,N_2}^\so/\J_{N_1,N_2}^1$:
\begin{itemize}
\item If~$\F=\F_\so$ and~$\e=-1$ then
\[
\Gg^\so_{N_1,N_2}\ \simeq\ \Sp(2N_1,k_\F)\times\Sp(2N_2,k_\F).
\]
\item If~$\F=\F_\so$ and~$\e=1$ then
\[
\Gg^\so_{N_1,N_2}\ \simeq\ \SO(N_1+N^\an_1,N_1,k_\F)\times\SO(N_2+N^\an_2,N_2,k_\F).
\]
\item If~$\F/\F_\so$ is quadratic unramified then 
\[
\Gg^\so_{N_1,N_2}\ \simeq\ \U(2N_1+N^\an_1,k_\F/k_\so)\times\U(2N_2+N^\an_2,k_\F/k_\so).
\]
\item If~$\F/\F_\so$ is quadratic ramified then
\[
\Gg^\so_{N_1,N_2}\ \simeq\ 
\begin{cases}
\SO(N_1+N^\an_1,N_1,k_\F)\times\Sp(2N_2,k_\F)&\text{ if }\e=+1, \\
\Sp(2N_1,k_\F)\times\SO(N_2+N^\an_2,N_2,k_\F)&\text{ if }\e=-1.
\end{cases}
\] 
\end{itemize}
Writing~$\ov\H$ for hyperbolic space over~$k_\F$, we can unify these by writing
\[
\Gg^\so_{N_1,N_2}\ \simeq\ \Gg^{(1)}_{N_1}\times\Gg^{(2)}_{N_2},
\]
with~$\Gg^{(i)}_{N_i}=\U(N_i\ov\H\oplus\ov\V^\an_{(i)})^\so$,
for~$i=1,2$.

We note that~$\J_{N_1,N_2}^\so$ is a maximal parahoric subgroup except where one of the factors here is~$\SO(1,1,k_\F)$ but~$\G$ is not itself a~$2$-dimensional special orthogonal group; that is, in the following cases:
\begin{itemize}
\item $\F=\F_\so$,~$\e=1$,~$(N,N^\an)\ne (1,0)$,
 with~$(N_i,N^\an_i)=(1,0)$, for~$i=1$ or~$2$;
\item $\F/\F_\so$ is quadratic ramified,~$N^\an=0$
 and~$(\e,N_1)=(1,1)$ or~$(\e,N_2)=(-1,1)$.
\end{itemize}

\section{Depth zero cuspidal representations}
\label{S.cuspidals}

In this section, we recall the classification of the depth zero
irreducible cuspidal representations of~$\GL_n(\F)$ and of the classical
group~$\G$, beginning with the former.

We write~$\Aa_n(\F)$ for the set of equivalence classes of irreducible
cuspidal representations of~$\GL_n(\F)$ and put~$\Aa(\F)=\bigcup_{n\ge
 1}\Aa_n(\F)$. We will abuse notation by writing~$\rho\in\Aa(\F)$ to
mean~$\rho$ is an irreducible cuspidal representation of
some~$\GL_n(\F)$, where~$n=n_\rho$ is of course uniquely determined
by~$\rho$. For~$\rho\in\Aa_n(\F)$, we denote by~$\rho^\s$ the
representation 
\[
\rho^\s(g) = \rho(\s(g^{-1})^\T), \qquad\hbox{ for }g\in\GL_n(\F),
\]
where~$\s(g)$ denotes the matrix obtained by applying the
generator~$\s$ of~$\Gal(\F/\F_\so)$ to each entry, and~$g^\T$ denotes
the transpose matrix. We say that~$\rho$ is \emph{self-dual}
if~$\rho^\s\simeq\rho$, and write~$\Aa_n^\s(\F)$ for the set of
equivalence classes of self-dual irreducible cuspidal representations
of~$\GL_n(\F)$, and~$\Aa^\s(\F)=\bigcup_{n\ge 1}\Aa_n^\s(\F)$ for the
set of equivalence classes of self-dual representations in~$\Aa(\F)$.

We do not recall here the general notion of depth, only that a
representation~$\rho\in\Aa(\F)$ is said to be of \emph{depth zero}
if it has fixed vectors under the pro-unipotent radical of the maximal
parahoric subgroup~$\GL_{n_\rho}(\o_\F)$
of~$\GL_{n_\rho}(\F)$. We denote by~$\Aa_{[0]}(\F)$ the set of
equivalence classes of depth zero representations in~$\Aa(\F)$, and
by~$\Aa_{[0]}^\s(\F)$ the set of equivalence classes of self-dual
depth zero representations in~$\Aa(\F)$.

Any depth zero representation~$\rho\in\Aa_n(\F)$ can be written
\[
\rho\ =\ \cind_{\BJ_\rho}^{\GL_n(\F)} \La_\rho,
\]
where~$\BJ_\rho=\F^\times\GL_n(\o_\F)$ is the normalizer of the
maximal parahoric subgroup~$\J_\rho=\GL_n(\o_\F)$ of~$\GL_n(\F)$,
and~$\La_\rho$ is an irreducible representation of~$\BJ_\rho$ whose
restriction~$\l_\rho=\La_\rho|_{\J_\rho}$ is the inflation of an
irreducible cuspidal representation~$\tau_\rho$ of the reductive
quotient~$\J_\rho/\J_\rho^1\simeq\GL_n(k_\F)$. Moreover, the
(equivalence class of the) representation~$\tau=\tau_\rho$ is uniquely
determined by~$\rho$. Further,~$\rho$ is self-dual if and only
if~$\tau$ is self-dual; that is, denoting again by~$\s$ the generator
of~$\Gal(k_\F/k_\so)$ and by~$\tau^\s$ the
representation~$\tau^\s(g)=\tau_\rho(\s(g^{-1})^\T)$,
for~$g\in\GL_n(k_\F)$, we have~$\tau^\s\simeq\tau$.

The (equivalence classes of) irreducible cuspidal
representations~$\tau$ of~$\GL_n(k_\F)$ were first classified by
Green~\cite{Green}, and are parametrized by regular characters of the
multiplicative group of the degree~$n$ extension of~$k_\F$, 
or, equivalently (after making choices), by monic irreducible
degree~$n$ polynomials~$P=P_\tau\in k_\F[X]$ with~$P(0)\ne
0$. Writing~$\sigma(P)$ for the polynomial obtained by
applying~$\sigma$ to the coefficients of~$P$, the
representation~$\tau^\s$ then corresponds to the
polynomial~$P^\sigma(X):=\sigma(P(0))^{-1}X^{\deg(P)}\sigma(P)(1/X)$. Thus~$\tau$
is self-dual if and only if~$P_\tau=P_\tau^\sigma$. If~$k_\F=k_\so$,
such polynomials exist if and only if~$n=1$ or~$n$ is even
(see~\cite{Adler}); if~$k_\F/k_\so$ is quadratic, then such
polynomials exist if and only if~$n$ is odd
(see~\cite[\S5.4]{Kariyama}).

\medskip

Similarly, we write~$\Aa(\G)$ for the set of equivalence classes
of irreducible cuspidal representations of~$\G$, and~$\Aa_{[0]}(\G)$
for the subset of equivalence classes of depth zero representations. 
Then, for~$\pi\in\Aa_{[0]}(\G)$, we can write
\[
\pi\ =\ \cind_{\J_\pi}^{\G}\l_\pi,
\]
where~$\J_\pi=\J_{N_1,N_2}$ is the (compact open) normalizer of a
standard maximal parahoric subgroup~$\J^\so_{\pi}$ and~$\l_\pi$ is an
irreducible representation of~$\J_{\pi}$ whose
restriction~$\l^\so_\pi=\l_\pi|_{\J^\so_{\pi}}$ is a sum of conjugates (under~$\J_\pi$)
of the inflation of an irreducible cuspidal representation~$\tau_\pi$
of the reductive quotient~$\Gg^\so_{N_1,N_2}$. By~\cite{MP,Vigneras},
the standard maximal parahoric subgroup~$\J^\so_{\pi}$ is uniquely
determined by~$\pi$, so that~$N_1,N_2$ here are determined by~$\pi$,
and the representation~$\tau_\pi$ is determined
up to conjugacy by an element of~$\Gg_{N_1,N_2}$. Since the
group~$\Gg_{N_1,N_2}^\so$ decomposes as~$\Gg^{(1)}_{N_1}\times\Gg^{(2)}_{N_2}$, 
we can also write~$\tau_\pi=\tau_\pi^{(1)}\otimes\tau_\pi^{(2)}$,
with~$\tau_\pi^{(i)}$ an irreducible cuspidal representation of~$\Gg^{(i)}_{N_i}$.

The irreducible cuspidal representations~$\tau$ of the groups~$\Gg^\so_{N_1,N_2}$ were
classified by Lusztig~\cite{L77,Ldisc}, in terms of semisimple
elements~$s$ of the dual group and unipotent irreducible cuspidal representations
of the centralizer of~$s$, generalizing the classification of
Green. (The only unipotent irreducible cuspidal representation of~$\GL_n(k_\F)$ is
the trivial representation of~$\GL_1(k_\F)$.) We will recall this
later, in Section~\ref{S.computation}, when we require it.

\section{Reducibility of parabolic induction}
\label{S.reducibility}

In this section, we recall some basic results, in particular due to
Silberger, on reducibility of parabolic induction. We continue with
the same notation, so that~$\G=\U(\V)^\so$ is our classical group. We
recall that we have the group~$\G_n=\U(\V_n)^\so$, with Levi
subgroup~$\M_n\simeq\GL_n(\F)\times\G$ (with the isomorphism
determined by the chosen Witt basis) and standard parabolic
subgroup~$\P_n=\M_n\N_n$. Let~$\rho\in\Aa_n(\F)$ and~$\pi\in\Aa(\G)$,
so that we can consider~$\rho\otimes\pi$ as a representation
of~$\M_n$. 

We are interested in the (ir)reducibility of the normalized
parabolically induced representation
\[
I(\rho,\pi,s) = \Ind_{\P_n}^{\G_n}\rho|\det(\cdot)|_\F^s\otimes\pi,
\]
for~$s\in\CC$, where~$|\cdot|_\F$ is the normalized absolute value
on~$\F$ (with image~$q^\ZZ$) and~$\det$ is the determinant
on~$\GL_n(\F)$. We note that replacing~$\rho$ by an unramified twist
just has the effect of translating the parameter~$s$; that is
\[
I(\rho|\det(\cdot)|_\F^{s_0},\pi,s)=I(\rho,\pi,s+s_0).
\]
Thus we lose no information if we replace our base-point~$\rho$ with
any unramified twist. We have the following fundamental result of Silberger:
the first part comes from~\cite[Corollaries~5.4.2.2--3]{Silbook} and
the second from~\cite[Theorem~1.6]{Sil}.

\begin{theorem}\label{thm:reducibility}
\begin{enumerate}
\item If~$I(\rho,\pi,s)$ is reducible for some~$s\in\RR$, then there
 exists~$s_0\in\RR$ such that~$\rho|\det(\cdot)|_\F^{s_0}$ is self-dual.
\item If~$\rho$ is self-dual and~$I(\rho,\pi,s)$ is reducible for
 some~$s\in\RR$, then there is a (unique) real number~$s_\pi(\rho)\ge
 0$ such that, for~$s\in\RR$,
\[
I(\rho,\pi,s)\text{ is reducible if and only if }s=\pm s_\pi(\rho).
\]
\end{enumerate}
\end{theorem}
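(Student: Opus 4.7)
My plan is to approach both parts through Harish-Chandra's theory of the standard intertwining operator and the associated Plancherel measure attached to~$\P_n$, exploiting that the relative Weyl group $W(\M_n,\G_n)$ has order two because $\M_n$ is a maximal Levi subgroup of~$\G_n$.

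For part~(i), the first step is to compute the Jacquet module of $I(\rho,\pi,s)$ along~$\P_n$ via the Bernstein--Zelevinsky geometric lemma. Since $\pi$ is cuspidal, this module has composition length at most two, with graded pieces $\rho|\det(\cdot)|_\F^s\otimes\pi$ and $w_0\cdot(\rho|\det(\cdot)|_\F^s\otimes\pi)$, where $w_0$ is the non-trivial element of $W(\M_n,\G_n)$. A short calculation using the identification $\M_n\simeq\GL_n(\F)\times\G$ from Section~\ref{S.notation} shows the latter piece equals $\rho^\s|\det(\cdot)|_\F^{-s}\otimes\pi$, with $\rho^\s$ the duality introduced in Section~\ref{S.cuspidals}. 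By Frobenius reciprocity, reducibility of $I(\rho,\pi,s)$ forces these two factors to be isomorphic, so $\rho^\s\simeq\rho|\det(\cdot)|_\F^{2s}$; equivalently, $\rho|\det(\cdot)|_\F^s$ is self-dual, and taking $s_0=s$ completes this part.

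For part~(ii), with $\rho$ self-dual, the substitution $s\mapsto-s$ preserves the relevant Bernstein component, which is reflected in the symmetry $\mu(s)=\mu(-s)$ of the Plancherel measure $\mu(s)=\mu(\rho,\pi,s)$. I would then appeal to Silberger's explicit analysis, which identifies $\mu(s)$ as a rational function of $q^{-s}$ and characterises the real reducibility points of $I(\rho,\pi,s)$ as the real zeros (and poles, on the imaginary axis) of $\mu$; a rank-one Hecke-algebra argument then gives at most one positive critical value, yielding the pair $\pm s_\pi(\rho)$.

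The hard part will be the uniqueness in~(ii). The geometric lemma furnishes only a necessary condition for reducibility and does not bound the number of real reducibility points; that bound really requires the full Harish-Chandra--Silberger rank-one reduction together with the explicit form of the Plancherel measure, which is the content of the cited~[Sil, Theorem~1.6] rather than something one proves by hand in a few lines.
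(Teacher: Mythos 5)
The paper gives no proof of this theorem: both parts are quoted directly from Silberger (\cite{Silbook}, Corollaries~5.4.2.2--3, and \cite{Sil}, Theorem~1.6), so your proposal --- the standard Bruhat/Jacquet-module argument for (i) and an appeal to Silberger's Plancherel-measure analysis for the uniqueness in (ii) --- is exactly in the spirit of what the paper does, and you are right that bounding the number of real reducibility points is the genuinely deep input that one cites rather than reproves. One small caveat in your computation for (i): for even special orthogonal groups the nontrivial relative Weyl element may also move $\pi$ to an outer conjugate $\pi^c$ (conjugation by an element of the full orthogonal group), so the second Jacquet factor is $\rho^\s|\det(\cdot)|_\F^{-s}\otimes\pi^c$ rather than $\rho^\s|\det(\cdot)|_\F^{-s}\otimes\pi$; this does not affect the conclusion of (i), but it is precisely the source of the exceptional irreducibility recorded in Remark~\ref{rmk:nored}.
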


\begin{remark}\label{rmk:nored}
In the situation of Theorem~\ref{thm:reducibility}, it is almost true
that, if~$\rho$ is self-dual then~$I(\rho,\pi,s)$ is reducible for
some~$s\in\RR$. The only exception comes from even special orthogonal
groups, where we have an extra subtlety (see~\cite{Jantzen} for more
details): if~$\pi$ is an irreducible cuspidal representation of~$\G$ which
is \emph{not} normalized by the full orthogonal group~$\G^+$ and~$n=1$
(so that~$\rho$ is a trivial or quadratic character of~$\F^\times$)
then~$I(\rho,\pi,s)$ is irreducible for all~$s\in\RR$. On the other
hand, in this situation, putting~$\pi^+=\Ind_{\G}^{\G^+}\pi$, which is
an irreducible cuspidal representation of~$\G^+$,
then~$I(\rho,\pi^+,s)$ does have reducibility, at~$s=0$.
\end{remark}

From Theorem~\ref{thm:reducibility}, for a fixed~$\pi\in\Aa(\G)$, we get a map
\[
s_\pi:\Aa^\s(\F)\to\RR_{\ge 0},
\]
where we define~$s_\pi(\rho)=0$ if~$I(\rho,\pi,s)$ is irreducible for
all~$s\in\RR$. 
Part of the well-known ``Basic Assumption'' made in~\cite{MT} is that
the image of this map is in fact in~$\frac 12\ZZ$ (indeed, this is now
known in many cases -- at least when~$\G$ is quasi-split -- through
the work of Arthur, M\oe glin, Waldspurger). We will prove here
(independently) that this is indeed the case for depth zero
representations~$\pi$.

Silberger's results in fact give a little more than stated here, since
we have stated them only for \emph{real} values of~$s$. Indeed,
if~$\rho\in\Aa^\s(\F)$ then there are (up to equivalence) exactly
\emph{two} unramified twists of~$\rho$ which are self-dual:~$\rho$ and
another one~$\rho'$. If, moreover,~$\rho$ is a depth zero
representation then this second representation is easy to describe: it
is~$\rho':=\rho|\det(\cdot)|_\F^{\pi\ii/n\log q}$. Thus Silberger's
result in fact gives a qualitative description of all complex~$s$ for
which~$I(\rho,\pi,s)$ is reducible.

In general, we will here only be able to compute the pair of
numbers~$\{s_\pi(\rho),s_\pi(\rho')\}$, rather than distinguishing
them individually. However, this is sufficient to prove the
equality~\eqref{eqn:expected}.

\section{Covers and Hecke algebras}
\label{S.covers}

The theory of types and covers was developed by Bushnell--Kutzko to
give a strategy and framework to describe the structure of the
category of smooth representations of a connected reductive
group. Here we are interested only in a rather special case (in
particular, we have only maximal proper Levi subgroups and depth zero
representations for classical groups) so we do not give definitions
and results in their full generality. In particular, we are
specializing to depth zero the results of~\cite[\S3.2]{Blb}.

We continue in the notation of the previous section but restrict to
depth zero. Thus we have~$\rho\in\Aa^\s_{[0]}(\F)$, a representation
of~$\GL_n(\F)$, and~$\pi\in\Aa_{[0]}(\G)$, giving us a
representation~$\rho\otimes\pi$ of the Levi
subgroup~$\M_n\simeq\GL_n(\F)\times\G$ of~$\G_n$.
We write~$\rho= \cind_{\BJ_\rho}^{\GL_n(\F)} \La_\rho$
and~$\pi = \cind_{\J_\pi}^{\G}\l_\pi$, as in Section~\ref{S.cuspidals}, and
use all the associated notation from there. We write~$\P_n=\M_n\N_n$ and denote 
by~$\P^-_m=\M_n\N^-_n$ the opposite parabolic subgroup.

We put~$\J_\M=\J_\rho\times\J_\pi$, a compact open subgroup of~$\M_n$,
and~$\l_\M=\l_\rho\otimes\l_\pi$, an irreducible representation
of~$\J_\M$. From~\cite[Theorem~1.1]{MiS}, there is
a~\emph{cover}~$(\J,\l)$ of~$(\J_\M,\l_\M)$, that is
\begin{itemize}
\item $\J$ is a compact open subgroup of~$\G_n$ which has an Iwahori decomposition with respect to~$(\M_n,\P_n)$ and such that~$\J\cap\M_n=\J_\M$;
\item $\l$ is an irreducible representation of~$\J$ whose restriction to~$\J_\M$ is~$\l_\M$ and whose restriction to~$\J\cap\N^{\pm}_n$ is a multiple of the trivial representation;
\item the Hecke algebra~$\Hh(\G_n,\l)$ contains an invertible element whose support is the~$(\J,\J)$-double coset of a strongly positive element of the centre of~$\M_n$.
\end{itemize}
Moreover, we have a description of the Hecke algebra~$\Hh(\G_n,\l)$
given by~\cite[Theorem~1.2]{MiS}:
\begin{itemize}
\item[(i)] If there is some~$g\in\G_n\setminus\M_n$ which
 normalizes~$\M_n$ and such that the conjugate by~$g$
 of~$\rho\otimes\pi$ is equivalent to an unramified twist
 of~$\rho\otimes\pi$, then~$\Hh(\G_n,\l)$ is a generic Hecke algebra
 on an infinite dihedral group; that is, it is generated
 by~$T_1,T_2$, each supported on a single~$(\J,\J)$-double coset,
 with relations
\[
(T_i-q^{f_i})(T_i+1)=0,
\]
for some half-integers~$f_i\ge 0$. Moreover, there is a recipe to
compute the~$f_i$, which we revisit in Section~\ref{S.reduction}
below.
\item[(ii)] Otherwise,~$\Hh(\G_n,\l)$ is abelian, isomorphic
 to~$\CC\left[Z^{\pm 1}\right]$.
\end{itemize}
In the second case, the induced representation~$I(\rho,\pi,s)$ is
irreducible for any~$s\in\CC$ so we restrict our interest to the first
case. Since~$\rho$ is self-dual, the condition in (i) is always
satisfied, unless~$\G$ is an even-dimensional special orthogonal
group, the representation~$\pi$ is~\emph{not} normalized by the full
orthogonal group~$\G^+$, and~$n=1$. (See Remark~\ref{rmk:nored}.)

\medskip

We write~$\Rr^{[\rho,\pi]}(\M_n)$ for the full subcategory of (smooth complex)
representations of~$\M_n$ all of whose irreducible subquotients are
unramified twists of~$\rho\otimes\pi$, and~$\Rr^{[\rho,\pi]}(\G_n)$
for the full subcategory of representations of~$\G_n$ all of whose
irreducible subquotients have supercuspidal support an unramified
twist of~$\rho\otimes\pi$. Then, since~$(\J,\l)$ is a cover
of~$(\J_\M,\l_\M)$ which is a type, we have a normalized embedding of
Hecke algebras~$t:\Hh(\M_n,\l_\M)\hookrightarrow\Hh(\G_n,\l)$ giving
us a commutative diagram
\[
\xymatrix{
\Rr^{[\rho,\pi]}(\G_n) \ar[r]^{\!\!\!\hh\ \ }&\Mod\Hh(\G_n,\l) \\
\Rr^{[\rho,\pi]}(\M_n) \ar[u]^{\Ind_{\P_n}^{\G_n}}
\ar[r]^{\!\!\!\hh_\M\ \ } &\Mod\Hh(\M_n,\l_\M)\ar[u]_{t_*}
}
\]
Here, the functor~$t_*$ maps a right~$\Hh(\M_n,\l_\M)$-module~$X$
to~$\Hom_{\Hh(\M_n,\l_\M)}(\Hh(\G_n,\l),X)$, where
the~$\Hh(\M_n,\l_\M)$-module structure on~$\Hh(\G_n,\l)$ is given
by~$t$. The horizontal arrows are equivalences of categories: the
functor~$\hh_\M$ is given by~$\xi\mapsto\Hom_{\J_\M}(\l_M,\xi)$, and
similarly for~$\hh$.

The Hecke algebra~$\Hh(\M_n,\l_\M)$ is isomorphic to~$\CC[Z^{\pm 1}]$,
where~$Z$ is supported on~$\z\J_\M$ and~$\z$ is the element of the
centre of~$\M_n$ which acts on~$\V_n=n\H^-\oplus\V\oplus n\H^+$ as~$1$
on~$\V$ and as~$\varpi_\F$ on~$n\H^+$. The
element~$T_2T_1\in\Hh(\G_n,\l)$ is supported on the double
coset~$\J\z\J$ and, since~$t(Z)$ is supported on the same double
coset, we may (and do) normalize~$Z$ so that~$t(Z)=T_2T_1$.

Now, from~\cite[Proposition~3.12]{Blb}, we get that,
if~$I(\rho,\pi,s)$ is reducible, then the real part of~$s$ belongs to
the set
\begin{equation}\label{eqn:redpoints}
\left\{\pm\frac{(f_1\pm f_2)}{2n}\right\}.
\end{equation}
We will see these values are always half-integers. Thus, in the
notation of Section~\ref{S.reducibility}, we have
\[
\{s_\pi(\rho),s_\pi(\rho')\}=\left\{\frac{|f_1\pm f_2|}{2n}\right\},
\]
where~$|\cdot|$ is the usual (real) absolute value, and we recall
that~$\rho'$ is the unique self-dual unramified twist of~$\rho$ which
is not equivalent to~$\rho$.

\section{Reduction to the finite case}
\label{S.reduction}

We now describe how to relate the parameters~$f_i$ of the
previous section to a problem in the representation theory of finite
reductive groups, and rephrase the equality~\eqref{eqn:expected} in
these terms. The recipe for computing these parameters is described
in~\cite[\S6.3]{MiS}, which is considerably simplified by only
treating depth zero representations; in particular, the
character~$\chi$ of \emph{loc.~cit.} is trivial. Thus the content of
this section is all proved in \emph{loc.~cit.} and here we just
explicate it in our special case.

We recall that~$\pi=\cind_{\J_\pi}^{\G}\l_\pi$, where~$\J_\pi$ is the
normalizer of a standard maximal parahoric subgroup~$\J^\so_{N_1,N_2}$,
with~$N_1,N_2$ uniquely determined, and~$\l_\pi|_{\J^\so_{N_1,N_2}}$
contains an irreducible representation~$\l_\pi^\so$ inflated from an irreducible cuspidal
representation~$\tau_\pi\simeq\tau_\pi^{(1)}\otimes\tau_\pi^{(2)}$ of the
(connected) reductive
quotient~$\Gg_{N_1,N_2}^\so\simeq\Gg_{N_1}^{(1)}\times\Gg_{N_2}^{(2)}$. More explicitly, as in
Section~\ref{S.notation}, we write
\[
\Gg_{N_i}^{(i)}=\U(N_i\ov\H\oplus\ov\V^\an_{(i)})^\so,\qquad
\text{for }i=1,2. 
\]
We will also write~$\Gg_{N_i}^{(i)+}$ for the full finite classical group of
which~$\Gg_{N_i}^{(i)}$ is the connected component. We will need to
distinguish the cases when~$\tau_\pi^{(i)}$ is normalized by~$\Gg_{N_i}^{(i)+}$
from those where it is not.

By construction, the group~$\J$ in~$\G_n$ has reductive quotient
isomorphic to~$\GL_n(k_\F)\times\Gg_{N_1,N_2}$, and we denote by~$\J^\so$ the
inverse image of its connected component. The parahoric
subgroup~$\J^\so$ is contained in precisely two maximal
compact subgroups of~$\G_n$, namely the standard maximal
compact~$\J_2:=\J_{N_1,N_2+n}$ and a maximal compact group~$\J_1$
conjugate to~$\J_{N_1+n,N_2}$. 
More precisely, both~$\J_1,\J_2$ would be standard with respect to the ordered
Witt basis
\[\begin{split}
&\ee_{N}^-,\ldots,\ee_{N_1+1}^-,
\ee_{N+n}^-,\ldots\ee_{N+1}^-,
\ee_{N_1}^-,\ldots,\ee_1^-,
\ee_1^\an,\ldots,\ee_{N^\an}^\an,\\
&\hspace{4.4cm}\ee_{1}^+,\ldots,\ee_{N_1}^+,
\ee_{N+1}^+,\ldots,\ee_{N+n}^+,
\ee_{N_1+1}^+,\ldots,\ee_{N}^+.
\end{split}\]
These maximal compact subgroups have
reductive quotients~$\Gg_1,\Gg_2$ isomorphic to~$\Gg_{N_1+n,N_2}$
and~$\Gg_{N_1,N_2+n}$ respectively, and the image of~$\J^\so$ in each
of these is a parabolic subgroup with Levi component isomorphic
to~$\GL_n(k_\F)\times\Gg_{N_1,N_2}^\so$. More explicitly,
the~$\Gg_i$ have connected components 
\begin{eqnarray*}
&&\Gg_1^{\so}\simeq \U((N_1+n)\ov\H\oplus\ov\V^\an_{(1)})^\so \times
\Gg_{N_2}^{(2)} \supseteq 
 \(\GL_n(k_\F)\times\Gg_{N_1}^{(1)} \) \times \Gg_{N_2}^{(2)}, \\
&&\Gg_2^{\so}\simeq \Gg_{N_1}^{(1)} \times
\U((N_2+n)\ov\H\oplus\ov\V^\an_{(2)})^\so \supseteq 
 \Gg_{N_1}^{(1)} \times \(\GL_n(k_\F)\times \Gg_{N_2}^{(2)}\).
\end{eqnarray*}
Moreover, we have certain Weyl group elements~$s_i\in\J_i$, defined
in~\cite[\S5.6]{MiS}. (See also~\cite[\S6.2]{S5} where, in most cases,
they are denoted~$s^\varpi_1,s_1$ respectively, though there is an
added complication when~$n=1$ and~$\G$ is a special orthogonal group,
explained in~\cite[\S5.6]{MiS}.) More explicitly, both~$s_1,s_2$
exchange (up to scalars) the vectors~$\ee_{N+j}^+$ and~$\ee_{N+j}^-$,
for~$1\le j\le n$ and preserve the subspace~$\V$ of~$\V_n$.

Now let~$(\J,\l)$ be the cover
of~$(\J_\rho\times\J_\pi,\l_\rho\otimes,\l_\pi)$ from
Section~\ref{S.covers}. The proof of the existence of this cover,
in~\cite{MiS}, goes via first constructing a cover~$(\J^\so,\l^\so)$
of~$(\J_\rho\times\J^\so_\pi,\l_\rho\otimes\l^\so_\pi)$. The Hecke
algebras~$\Hh(\G_n,\l^\so)$ and~$\Hh(\G_n,\l)$ are not in general
isomorphic, but they are closely related, as we now describe.

The Hecke algebra~$\Hh(\G_n,\l)$ is generated by two
elements~$T_1,T_2$, with~$T_i$ supported on~$\J s_i\J$ and satisfying
a quadratic relation~$(T_i-q^{f_i})(T_i+1)=0$. Then:
\begin{enumerate}
\item If~$n=1$, and either~$\Gg_{N_i}^{(i)+}$ is the trivial
 orthogonal group (\ie the orthogonal group on a trivial space) or
the irreducible cuspidal representation~$\tau_\pi^{(i)}$ of~$\Gg_{N_i}^{(i)}$ is \emph{not}
normalized by~$\Gg_{N_i}^{(i)+}$, then we always have~$f_i=0$; that is,~$T_i^2=1$.
Note that our assumption that we have reducibility implies that this
can be the case for at most one value of~$i$, and this is never
the case if~$\G$ is either symplectic or unramified unitary.
\item Otherwise, there is a corresponding
 element~$T_i^\so\in\Hh(\G_n,\l^\so)$ with support~$\J^\so s_i\J^\so$
 and satisfying the same quadratic relation as~$T_i$ (with the same
 parameter~$f_i$).
\end{enumerate}
It remains to describe the parameter~$f_i$ in the latter case and we
assume from now on that we are in that situation.

By inflation, we have a support-preserving algebra injection
\[
\Hh(\Gg_i^{\so},\tau_\rho\otimes\tau_\pi^\so)\hookrightarrow 
\Hh(\G_n,\l^\so),
\]
and~$T_i^\so$ is in the image of this map. We also have isomorphisms
\[
\Hh(\Gg_i^{\so},\tau_\rho\otimes\tau_\pi^\so) \simeq
\Hh(\U((N_i+n)\ov\H\oplus\ov\V^\an_{(i)})^\so,\tau_\rho\otimes\tau_\pi^{(i)}),
\]
and the algebra on the right is described (at least in the case that
the ambient finite classical group has connected centre)
in~\cite[Theorem~8.6]{L}: they are two-dimensional, generated by
an element satisfying the same quadratic relation, with~$q^{f_i}$ the
quotient of the dimensions of the two irreducible factors of the
representation parabolically induced
from~$\tau_\rho\otimes\tau_\pi^{(i)}$. Moreover, one can compute this by
using Lusztig's Jordan decomposition of characters, as we describe
in the next section.

\section{Computation of parameters}
\label{S.computation}

In this section, we undertake the computation of the parameters in the
finite Hecke algebras from above. When the finite reductive group
arising has connected centre, this can more-or-less be read off from
the Jordan decomposition of characters and the case of unipotent
irreducible cuspidals, for which there are tables in~\cite{LCBMS}. In general, one
must first embed the group into one with connected centre, and then
make the comparison. A special case is already carried out
in~\cite{KM}, where they look at the Hecke algebra coming from
inducing a self-dual irreducible cuspidal representation of the Siegel parabolic
of a classical group. 

In order to fit with the usual notations for finite reductive groups,
the notation in this section is independent of that in the rest of the
paper. We will not recall here the definitions of~\emph{geometric} and~\emph{rational Lusztig series}, both of which give partitions of the set of irreducible representations of a connected finite reductive group coming from Deligne--Lusztig induction; we refer the reader instead, for example, to~\cite{CE} or~\cite{DM}.

\subsection{Self-dual polynomials}
\label{SS.polynomials}

We begin with a brief section on irreducible self-dual polynomials
over finite fields, since these will be used to parametrize the
irreducible cuspidal representations of our finite reductive
groups. We fix~$\FF_q$ a finite field of odd cardinality~$q$ and
let~$\FF_{q_\so}$ be a subfield of index at most~$2$. We denote
by~$\s$ the automorphism generating~$\Gal(\FF_q/\FF_{q_\so})$, and use
the same notation for the induced automorphism of the polynomial
ring~$\FF_q[X]$, obtained by applying~$\s$ to all coefficients. 

For~$P\in\FF_q[X]$, we put
\[
P^\s(X)\ :=\ \s(P(0))^{-1}X^{\deg(P)}\s(P)(1/X).
\]
We say that a monic polynomial~$P\in\FF_q[X]$ is
\emph{$\FF_q/\FF_{q_\so}$-self-dual} if~$P=P^\s$; thus~$P$ is
$\FF_q/\FF_{q_\so}$-self-dual if and only if:
\begin{enumerate}
\item when~$\FF_q=\FF_{q_\so}$, for each root~$\zeta$ of~$P$
(in some algebraic closure of~$\FF_q$), the element~$\zeta^{-1}$
is also a root of~$P$;
\item when~$\FF_q\ne\FF_{q_\so}$, for each root~$\zeta$ of~$P$, the
 element~$\zeta^{-q_\so}$ is also a root of~$P$.
\end{enumerate}
When~$\FF_q=\FF_{q_\so}$, we will just speak of \emph{self-dual}
polynomials; these might more often elsewhere be called \emph{reciprocal}.

If we now restrict to \emph{irreducible} $\FF_q/\FF_{q_\so}$-self-dual
monic polynomials~$P$, the possibilities are somewhat constrained:
\begin{enumerate}
\item when~$\FF_q=\FF_{q_\so}$, either~$P(X)=X\pm 1$ or else~$\deg(P)$
 is even;
\item when~$\FF_q\ne\FF_{q_\so}$, we must have that~$\deg(P)$ is odd.
\end{enumerate}

\subsection{Connected centre}
\label{SS.connected}

We now turn to the problem at hand, beginning in the case of a group
with connected centre, so that the
centralizer of any semisimple element of the dual group is
connected. Let~$\Gg$ be a connected reductive group of \emph{classical type},
over a finite field~$\FF_q$ of odd characteristic~$p$, \emph{with
connected centre} and with Frobenius map~$\Ff$. By classical type here
we mean that~$\Gg$ is one of:
\begin{enumerate}
\item an odd-dimensional special orthogonal group~$\SO_{2N+1}$;
\item a group of symplectic similitudes~$\GSp_{2N}$;
\item a group of orthogonal similitudes~$\GSO_{2N}^+$ or~$\GSO_{2N}^-$,
 of Witt index~$N$,~$N-1$ respectively. (Note that we mean here
 that~$\GSO_{2N}^{\pm}$ is the connected component of the full group of
 orthogonal similitudes~$\GO_{2N}^{\pm}$.) In this case we do
 \emph{not} allow the group~$\GSO_2^+$.
\end{enumerate}
In each case, the Frobenius map~$\Ff$ is the standard one.
We denote by~$\Gg^*$ the dual group and write~$\Ff$ again for the
(dual) Frobenius on it. The dual group acts naturally on
an~$\overline\FF_q$-vector space~$\Vv$, with an~$\FF_q$-structure and a
form, of dimension~$2N,2N+1,2N$ respectively in the three cases above. 
In case (iii), we say that~$\Vv$ is of type~$+1$ if it has Witt index~$N$, 
and of type~$-1$ otherwise; we say that the zero space has type~$+1$.

Write~$\Ee(\Gg^\Ff)$ for the set of equivalence classes of irreducible
(complex) representations of~$\Gg^\Ff$. Then (see for example~\cite[\S7.6]{L77})
there is a partition into \emph{geometric Lusztig series}
\[
\Ee(\Gg^\Ff)\ =\ \bigcup_s \Ee(\Gg^\Ff,s),
\]
where~$s$ runs over the conjugacy classes of semisimple elements
of~$\Gg^{*,\Ff}$. (Note that rational and geometric conjugacy classes
coincide as the centre of~$\Gg$ is connected.) The partition is given
as follows: for any~$\Ff$-stable maximal torus~$\Tt$ of~$\Gg^*$
containing~$s$, we have the Deligne--Lusztig
representation~$\R_\Tt^\Gg s$; then an irreducible
representation~$\pi$ of~$\Gg^\Ff$ lies in~$\Ee(\Gg^\Ff,s)$ if and only
if there is such a torus~$\Tt$ with
\[
\la \pi,\R_\Tt^\Gg s\ra\ \ne\ 0.
\]
(Here,~$\la\cdot,\cdot\ra$ denotes the natural~$\Gg$-invariant inner
product on class functions, and we identify (equivalence classes of)
representations with their characters.) 

Given a semisimple element~$s\in\Gg^{*,\Ff}$,
the centralizer~$\Gg^*_s$ is a connected reductive group of the same
rank as~$\Gg$, though in general it is not a Levi subgroup.
Then the Jordan decomposition of
characters~\cite[Corollary~7.10]{L77} (see
also~\cite[Theorem~15.8]{CE}) gives a bijection
\[
\psi_s^\Gg:\Ee(\Gg^\Ff,s)\to\Ee(\Gg^{*,\Ff}_s,1) 
\]
with the following properties (see~\cite[\S7.8]{L77}):
\begin{itemize}
\item for any irreducible representation~$\pi$ in~$\Ee(\Gg^\Ff,s)$ and
 any~$\Ff$-stable maximal torus~$\Tt$ containing~$s$,
\begin{equation}\label{eqn:char}
\e_{\Gg}\la \pi,\R_\Tt^\Gg s\ra = \e_{\Gg_s}\la \psi_s^G(\pi),\R_\Tt^{\Gg_s^*}1\ra,
\end{equation}
where~$\e_{\Gg}=(-1)^{\FF_q\mrank(\Gg)}$ (see~\cite[Definition~8.3]{DM});
\item there is a uniform constant~$c_s$ such that
\[
\dim\pi\ =\ c_s\dim \psi_s^\Gg(\pi),
\]
for all~$\pi$ in~$\Ee(\Gg^\Ff,s)$; explicitly, writing~$d_{p'}$ for
the maximal divisor of~$d$ coprime to~$p$, for any positive
integer~$d$, we have~$c_s=|\Gg^{*,\Ff}|_{p'}|\Gg_s^{*,\Ff}|^{-1}_{p'}$;
\item if the identity components of the centres of~$\Gg^*$
and~$\Gg_s^*$ have the same~$\Ff_q$-rank then~$\pi$
in~$\Ee(\Gg^\Ff,s)$ is cuspidal if and only if~$\psi_s^\Gg(\pi)$ is
cuspidal; otherwise, no representation in~$\Ee(\Gg^\Ff,s)$ is
cuspidal.
\end{itemize}
(We remark that all the results so far extend to the case of
disconnected centre once we replace the geometric Lusztig series with
the rational series -- see below in section~\ref{SS.disconnected} for the notion
of rational Lusztig series, which coincides with the geometric one for groups 
with connected cntre.)

Thus we get a classification of the irreducible cuspidal
representations of~$\Gg^\Ff$ from a classification of
pairs~$(s,\tau)$, with~$s$ a semisimple element of~$\Gg^{*,\Ff}$ (up
to conjugacy) such that the identity components of the centres
of~$\Gg^*$ and~$\Gg_s^*$ have the same~$\Ff_q$-rank, and~$\tau$ an
irreducible cuspidal unipotent representation of~$\Gg_s^{*,\Ff}$ (up to
equivalence). Lusztig classified the irreducible cuspidal unipotent
representations of classical groups -- in particular, there is at most
one irreducible cuspidal unipotent representation for each such group -- and we find
(see~\cite[p172]{L77}) the following. 

For~$s\in\Gg^{*,\Ff}$ semisimple, we denote by~$P_s\in\FF_q[X]$ its
characteristic polynomial (as an automorphism of the space~$\Vv$ on
which~$\Gg^*$ acts naturally), by~$\Vv_+$ the~$(+1)$-eigenspace and by~$\Vv_-$
the~$(-1)$-eigenspace. Then there is
a bijection between the (equivalence classes of) irreducible cuspidal
representations of~$\Gg^\Ff$ and the set of conjugacy classes of
semisimple elements~$s\in\Gg^{*,\Ff}$ such that
\[
P_s(X)\ =\ \prod_{P} P(X)^{a_P},
\]
where the product runs over all irreducible self-dual monic
polynomials over~$\FF_q$ and the integers~$a_p$ satisfy:

\def\theequation{(\arabic{section}.\arabic{equation})}
\begin{notitle}\label{eqn:notitle}
\begin{itemize}
\item[{$\bullet$\qquad\quad\,}] \ \hskip-1.2cm$\displaystyle\sum_P a_P\deg(P) = \dim\Vv$;
\item for~$P(X)\ne (X\pm 1)$, we have~$a_P=\tfrac
12(m_P^2+m_P^{})$, for some integer~$m_P^{}\ge 0$;
\item writing~$a_+:=a_{(X-1)}$ and~$a_-:=a_{(X+1)}$, there are
 integers~$m_+^{},m_-^{}\ge 0$ such that
\begin{enumerate}
\item if~$\Gg=\SO_{2N+1}$ then~$a_+=2(m_+^2+m_+^{})$
 and~$a_-=2(m_-^2+m_-^{})$,
\item if~$\Gg=\GSp_{2N}$ then~$a_+=2(m_+^2+m_+^{})+1$
 and~$a_-=2m_-^2$,
\item if~$\Gg=\GSO_{2N}^{\pm}$ then~$a_+=2m_+^2$ and~$a_-=2m_-^2$,
\end{enumerate}
where, in case~(iii),~$\Vv_{\pm}$ is an even-dimensional orthogonal space of
type~$(-1)^{m_\pm}$, and the same in case~(ii) for~$\Vv_-$ only.
\end{itemize}
\end{notitle}
\def\theequation{\arabic{section}.\arabic{equation}}

Let~$s$ be a semisimple element of~$\Gg^{*,\Ff}$ and suppose that we
have an~$\Ff$-stable Levi subgroup~$\Ll^*$ contained in
an~$\Ff$-stable parabolic subgroup~$\Pp^*$ of~$\Gg^*$ such
that~$s\in\Ll^*$. Correspondingly, we have an~$\Ff$-stable
Levi subgroup~$\Ll$ contained in an~$\Ff$-stable parabolic
subgroup~$\Pp$ of~$\Gg$. Then~$\Ll^*_s$ is an~$\Ff$-stable rational
Levi subgroup of~$\Gg^*_s$ (though it need not be a \emph{proper} Levi
subgroup), contained in an~$\Ff$-stable parabolic~$\Pp^*_s$. Then we
have a diagram
\[
\xymatrix{
\ZZ\Ee(\Gg^\Ff,s)\ar[rr]^{\psi_s^\Gg}&&\ZZ\Ee(\Gg_s^{*,\Ff},1)\\
\Ee(\Ll^\Ff,s)\ar[rr]^{\psi^\Ll_s}\ar[u]_{\Ind_{\Ll,\Pp}^{\Gg}}&&
\Ee(\Ll_s^{*,\Ff},1)\ar[u]_{\Ind_{\Ll_s^{*},\Pp_s^*}^{\Gg_s^{*}}}
}
\]
(The vertical arrows here are parabolic induction -- \ie
Harish-Chandra induction -- and we have abbreviated
from~$\Ind_{\Ll^\Ff,\Pp^\Ff}^{\Gg^\Ff}$ since the notation is already heavy.)
This diagram commutes (this is a result of Shoji, which can be extracted from the appendix to~\cite{FS}); in the cases that interest us here it can be seen fairly directly:
\begin{itemize}
\item If~$\Gg_s^*\subseteq\Ll^*$ then the vertical arrows preserve
irreducibility (this is a special case of~\cite[(7.9.1)]{L77}) and
the diagram commutes by~\eqref{eqn:char}. In fact, this also
generalizes to the case where the parabolic~$\Pp$ is not~$\Ff$-stable,
replacing~$\Ind_{\Ll,\Pp}^{\Gg}$ by the Deligne--Lusztig
map~$\e_\Gg\e_\Ll\R_{\Ll\subset\Pp}^\Gg$; indeed, in that case the diagram
commutes~\emph{by definition} of the map~$\psi_s^\Gg$ (see the proof
of~\cite[Proposition~7.9]{L77}).
\item Suppose~$\Ll$ is a \emph{maximal} proper Levi subgroup
and~$\tau\in\Ee(\Ll,s)$ is cuspidal. Then~$\Nn_\Gg(\Ll)/\Ll$ has
order~$1$ or~$2$. We are interested in the case where~$\Ind_\Ll^\Gg\tau$ is
reducible; equivalently,~$\Nn_\Gg(\Ll)/\Ll$ has order~$2$ and, writing~$w$ for a representative of the nontrivial coset,~$w$ normalizes~$\tau$. In this case the
induced representation decomposes as
\[
\Ind_{\Ll,\Pp}^\Gg\tau\ =\ \pi_1\oplus\pi_2, \qquad\dim(\pi_1)>\dim(\pi_2),
\]
(the inequality is strict by~\cite[Theorem~8.6]{L})
and~$\End_{\Gg^\Ff}(\Ind_{\Ll,\Pp}^{\Gg}\tau)$ is a two-dimensional
algebra with a quadratic generator~$T$ satisfying a relation of the
form
\[
(T+1)(T-q^{f_\tau})\ =\ 0,\qquad q^{f_\tau}=\frac{\dim(\pi_1)}{\dim(\pi_2)}.
\]
Moreover, the same is true
for~$\Ind_{\Ll_s^*,\Pp_s^*}^{\Gg_s^*}\psi_s^\Ll(\tau)$ and the recipe given
in~\cite[\S8]{L} (see~\emph{op.\ cit.\/} Theorem~8.6) to
calculate~$f_\tau$ depends on the computation of certain Weyl groups
which are the same for both induced representations. (Indeed, this
matching is the idea behind the inductive proof of the Jordan
decomposition of characters.) On the side of the centralizer, we have
unipotent representations and the parameter~$q^{f_\tau}$ can be read
from the tables in~\cite{LCBMS}. In the special case that~$s^2=1$ (to
which one could reduce) one can also read off the parameter
from~\cite[Proposition~8.3]{L}.
\end{itemize}

\subsection{Disconnected centre}
\label{SS.disconnected}

We now consider the case which really interests us here. So we suppose
that~$\Gg$ is a \emph{classical group} over a finite field~$\FF_q$ of
odd characteristic~$p$, with Frobenius map~$\Ff$. By classical group here,
we mean that~$\Gg$ is one of:
\begin{enumerate}
\item an odd-dimensional special orthogonal group~$\SO_{2N+1}$;
\item a symplectic group~$\Sp_{2N}$;
\item an even-dimensional special orthogonal group~$\SO_{2N}^+$ or~$\SO_{2N}^-$,
 of Witt index~$N$,~$N-1$ respectively, where again we do not allow the group~$\SO_2^+$.
\end{enumerate}
In each case, the Frobenius map~$\Ff$ is again the standard
one. Case~(i) has already been treated above, so we only
consider cases~(ii),(iii) here.

In each case, we embed~$\Gg$ in a group~$\tGg$ with connected centre of
the type considered in Section~\ref{SS.connected}. Then we get a
map~$\tGg^{*}\to\Gg^{*}$ which maps conjugacy classes of
semisimple elements in~$\tGg^{*,\Ff}$ to (rational)
$\Gg^{*,\Ff}$-conjugacy classes of semisimple elements
in~$\Gg^{*,\Ff}$. 

The geometric conjugacy class of a semisimple element~$s$
in~$\Gg^{*,\Ff}$ splits into two~$\Gg^{*,\Ff}$-conjugacy classes if
and only if its centralizer~$\Gg_s^*$ is disconnected, which happens
if and only both~$1$ and~$-1$ are eigenvalues of~$s$. 
Here we also have a partition (into \emph{rational} Lusztig series) of
the set of equivalence classes of irreducible representations of~$\Gg^\Ff$,
\[
\Ee(\Gg^\Ff)\ =\ \bigcup_s \Ee(\Gg^\Ff,s),
\]
where~$s$ runs over the~$\Gg^{*,\Ff}$-conjugacy classes of semisimple
elements of~$\Gg^{*,\Ff}$. Each geometric Lusztig series is the union
of at most two rational Lusztig series, corresponding to the rational
conjugacy classes in a geometric conjugacy class. Moreover, if~$\tilde
s\in\tGg^{*,\Ff}$ maps to~$s\in\Gg^{*,\Ff}$, then the rational
series~$\Ee(\Gg^\Ff,s)$ is precisely the set of irreducible components
of the restriction to~$\Gg^\Ff$ of the representations in the Lusztig
series~$\Ee(\tGg^\Ff,\tilde s)$ (see~\cite[Proposition~15.6]{CE}).

\medskip

We begin by considering the irreducible cuspidal representations
of~$\Gg^\Ff$. An irreducible representation of~$\Gg^\Ff$ is cuspidal if and
only if it is a component of the restriction of an irreducible
cuspidal representation of~$\tGg^\Ff$. In general, an irreducible cuspidal
representation of~$\tGg^\Ff$ will decompose as a sum of at
most two pieces on restriction to~$\Gg^\Ff$,
inequivalent but of the same dimension when there are two (since they
are conjugate by~$\tGg^\Ff$). Precisely what happens is essentially
determined by~\cite[Lemma~8.9]{L77}, which treats the 
special case of quadratic unipotent representations, as follows.

Let~$\tilde s\in\tGg^{*,\Ff}$ be such that its image~$s\in\Gg^{*,\Ff}$ is an
involution and such that~$\Ee(\tGg^\Ff,\tilde s)$ contains an
irreducible cuspidal representation~$\pi$; then~$\pi|_{\Gg^\Ff}$ is irreducible if
and only if~$s=\pm 1$. (Note that~$s=-1$ is in fact not possible
when~$\G=\Sp_{2n}$.)

This is enough to deal with the general case because of the following
(see~\cite[Theorem~8.27]{CE}). Suppose~$\Ll$ is an~$\Ff$-stable Levi
subgroup of~$\Gg$ and~$\Pp$ is a parabolic subgroup (not
necessarily~$\Ff$-stable) with Levi~$\Ll$, and let~$\Ll^*$,~$\Pp^*$ be
the corresponding subgroups of~$\Gg$. Suppose~$s\in\Gg^{*,\Ff}$ is a
semisimple element such
that~$\Gg^{*,\so}_s\Gg^{*,\Ff}_s\subseteq\Ll^*$,
where~$\Gg^{*,\so}$ denotes the connected component of~$\Gg^{*}$. Then
Deligne--Lusztig twisted induction~$\R_{\Ll\subset\Pp}^\Gg$ gives a bijection 
\[
\e_\Gg\e_\Ll\R_{\Ll\subset\Pp}^\Gg:\Ee(\Ll^\Ff,s)\to\Ee(\Gg^\Ff,s).
\]
Moreover (cf.~\cite[\S7.9]{L77}), there is a
constant~$c_{\Ll,\Gg}=\lvert \Gg^\Ff\rvert_{p'}\lvert \Ll^\Ff\rvert^{-1}_{p'}$
such that, for any~$\pi\in \Ee(\Ll^\Ff,s)$, 
\[
\dim\(\e_\Gg\e_\Ll\R_{\Ll\subset\Pp}^\Gg(\pi)\)\ =\ c_{\Ll,\Gg}\dim(\pi).
\]
Finally, this map respects cuspidality when the connected centres
of~$\Gg^*$ and~$\Ll^*$ have the same~$\FF_q$-rank: \ie in that
case,~$\pi\in\Ee(\Ll^\Ff,s)$ is cuspidal if and only
if~$\e_\Gg\e_\Ll\R_{\Ll\subset\Pp}^\Gg\pi$ in~$\Ee(\Gg^\Ff,s)$ is cuspidal.

Now, given a semisimple element~$\tilde s$ of~$\tGg^{*,\Ff}$, mapping
to~$s\in\Gg^{*,\Ff}$, such that~$\Ee(\tGg^\Ff,\tilde s)$ contains a
cuspidal representation, denote by~$\Vv=\bigoplus_{P}\Vv_P\oplus\Vv_0$
the decomposition of~$\Vv$ into the rational eigenspaces corresponding
to the irreducible monic self-dual polynomials of even degree,
with~$\Vv_0=\Vv_+\oplus\Vv_-$ the~$(\pm 1)$-eigenspace; then the
stabilizer~$\Ll^*$ of this decomposition is an~$\Ff$-stable Levi
subgroup containing the centralizer of~$s$, and the connected centres
of~$\Gg^*$ and~$\Ll^*$ have the same~$\FF_q$-rank. Thus we have a
bijection between the irreducible cuspidal representations
in~$\Ee(\Gg^\Ff,s)$ and in~$\Ee(\Ll^\Ff,s)$. Moreover, denoting
by~$\tLl,\tPp$ the inverse images of~$\Ll,\Pp$ in~$\tGg$ respectively,
we have (see~\cite[(15.5)]{CE}) a commutative diagram
\[
\xymatrix{
\Ee(\tLl^\Ff,\tilde s)\ar[rr]^{\R_{\tLl\subset\tPp}^\tGg}\ar[d]^{\Res^{\tLl}_{\Ll}}
&&\Ee(\tGg^{\Ff},\tilde s)\ar[d]^{\Res^{\tGg}_{\Gg}}\\
\ZZ\Ee(\Ll^\Ff,s)\ar[rr]^{\R_{\Ll\subset\Pp}^\Gg}
&&\ZZ\Ee(\Gg^{\Ff},s),
}
\]
where we have omitted the superscripts~$\Ff$ in the functors, for ease
of notation.

Putting this together, we get that, for general semisimple~$\tilde s$
of~$\tGg^{*,\Ff}$, mapping to~$s\in\Gg^{*,\Ff}$, and~$\pi$ an
irreducible cuspidal representation in~$\Ee(\tGg^\Ff,\tilde s)$, the
restriction~$\pi|_{\Gg^\Ff}$ is irreducible if and only if~$s$ acts
as~$\pm 1$ on~$\Vv_0$. Thus, in fact, the restriction remains
irreducible if and only if the centralizer~$\Gg_s^*$ is connected. We
have proved:

\begin{lemma}\label{lem:extend}
Let~$\pi\in\Ee(\Gg^\Ff,s)$ be an irreducible cuspidal representation
and let~$\tilde s\in\tGg^{*,\Ff}$ be a semisimple element mapping
to~$s$. The following are equivalent:
\begin{enumerate}
\item $\pi$ extends to an irreducible representation
 in~$\Ee(\tGg^\Ff,\tilde s)$;
\item the centralizer~$\Gg_s^*$ is connected;
\item at most one of~$\pm 1$ is an eigenvalue of~$s$.
\end{enumerate}
\end{lemma}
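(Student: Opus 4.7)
The plan is to assemble the proof directly from the three observations already gathered in the preceding discussion.

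The equivalence $(ii)\Leftrightarrow(iii)$ was recalled at the start of this subsection: for $\Gg$ a symplectic or even special orthogonal group, the centralizer $\Gg_s^*$ of a semisimple element is disconnected precisely when both $+1$ and $-1$ occur as eigenvalues of $s$, since for the other eigenvalues $\zeta\neq\pm 1$ the block $\Gg^*_{s,\zeta}$ is a general linear group (coupling the $\zeta$ and $\zeta^{-1}$ eigenspaces) and is therefore automatically connected.

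For $(i)\Leftrightarrow(iii)$, I would first perform the reduction to the case of an involution by twisted Deligne--Lusztig induction from the Levi $\Ll^*$ stabilizing the rational eigenspace decomposition $\Vv=\bigoplus_{P}\Vv_P\oplus\Vv_0$. Because $\Ee(\tGg^\Ff,\tilde s)$ contains a cuspidal, the connected centres of $\Gg^*$ and $\Ll^*$ have the same $\FF_q$-rank, so $\e_\Gg\e_\Ll\R^\Gg_{\Ll\subset\Pp}$ restricts to a bijection between cuspidals in $\Ee(\Ll^\Ff,s)$ and those in $\Ee(\Gg^\Ff,s)$, and analogously for the $\widetilde{\phantom{a}}$-versions. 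Combined with the commutative diagram from \cite[(15.5)]{CE} displayed just above the statement, this transfers the question: $\pi|_{\Gg^\Ff}$ is irreducible if and only if the corresponding cuspidal of $\Ll^\Ff$ remains irreducible under restriction from $\tLl^\Ff$. Now $\Ll$ factors as a product of general linear groups (one for each pair of self-dual blocks $\Vv_P$, $\deg P$ even) times a classical group $\Gg_0$ acting on $\Vv_0$; the $\GL$-factors lie in $\tLl$ already, so they always extend irreducibly, and the question collapses to whether the cuspidal of $\Gg_0^\Ff$ labelled by $s|_{\Vv_0}$ extends to $\tGg_0^\Ff$. Since $s|_{\Vv_0}$ is an involution, this is exactly the setting of \cite[Lemma~8.9]{L77}, quoted above, and the extension is irreducible if and only if $s|_{\Vv_0}=\pm 1$, i.e.\ if and only if at most one of $\pm 1$ is an eigenvalue of $s$ on $\Vv$. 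This closes the cycle.

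The main obstacle is the bookkeeping around the reduction: one must check that the disconnectedness of $\Gg_s^*$ is genuinely concentrated in the classical factor $\Gg_0$ of $\Ll$, and that the commutative diagram matches up cuspidals on the two levels compatibly with the notion of ``extending to $\tGg^\Ff$''. Both points follow from the results already cited, together with the fact that the map $\tGg^*\to\Gg^*$ restricts to the analogous map $\tLl^*\to\Ll^*$ on the Levi factor.
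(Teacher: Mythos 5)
Your proposal is correct and follows essentially the same route as the paper: the equivalence of (ii) and (iii) is the standard eigenvalue criterion recalled at the start of the subsection, and (i)$\Leftrightarrow$(iii) is obtained by reducing to the involution case via the Levi stabilizing the eigenspace decomposition $\Vv=\bigoplus_P\Vv_P\oplus\Vv_0$, the cuspidality-preserving bijection given by twisted Deligne--Lusztig induction, the commutative diagram from \cite[(15.5)]{CE}, and finally Lusztig's \cite[Lemma~8.9]{L77}. Your explicit remark that the $\GL$-factors of $\Ll$ already sit inside $\tLl$, so the index-two issue is concentrated in the classical factor $\Gg_0$, is a helpful elaboration of a point the paper leaves implicit, but it is not a different argument.
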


\medskip

Now we turn to the computation of the parameter. Thus, as in
Section~\ref{SS.connected}, we have~$s$ a semisimple element
of~$\Gg^{*,\Ff}$ and we suppose that~$\Ll^*$ is a maximal
proper~$\Ff$-stable Levi subgroup contained in an~$\Ff$-stable parabolic
subgroup~$\Pp^*$ of~$\Gg^*$ such that~$\Gg_s^*\subseteq\Ll^*$. 
Correspondingly, we have~$\Ff$-stable Levi and parabolic
subgroups~$\Ll,\Pp$ in~$\Gg$. We lift~$s$ to~$\tilde s\in\tGg^*$ and
likewise have lifts~$\tLl^*$ and~$\tPp^*$, with~$\tGg_{\tilde
 s}^*\subseteq\tLl^*$, and corresponding subgroups~$\tLl,\tPp$
in~$\tGg$ into which~$\Ll,\Pp$ respectively embed.

We are given a cuspidal representation~$\tau$ in~$\Ee(\Ll^\Ff,s)$, so
that~$\tau$ appears (with multiplicity one) in the restriction of a
cuspidal representation~$\tilde\tau$ in~$\Ee(\Ll^\Ff,\tilde s)$,
with~$\tilde s\in\tLl^{*,\Ff}$ mapping to~$s$. Let~$w$ be any
representative for the non-trivial element of~$\Nn_\Gg(\Ll)/\Ll$. We
will eventually be interested in the case where~$w$ normalizes~$\tau$
(so that the parabolically induced representation from~$\tau$ is
reducible) but, in that case, it is not immediately clear
whether or not~$w$ normalizes~$\tilde\tau$. We will need to know
exactly when this is the case but we can already say something about the
parameters in the Hecke algebras. For ease of notation, we again omit the
superscripts~$\Ff$ in the following.

\begin{lemma}\label{lem:paramtau}
\begin{enumerate}
\item\label{lem:paramtau.i}
Suppose~$w$ normalizes~$\tilde\tau$. Then~$w$ also normalizes~$\tau$
and we have an isomorphism of Hecke
algebras~$\End_\Gg(\Ind_{\Ll,\Pp}^\Gg\tau)\simeq\End_\tGg(\Ind_{\tLl,\tPp}^\tGg\tilde\tau)$.
\item\label{lem:paramtau.ii}
Suppose~$w$ does not normalize~$\tilde\tau$ but does
normalize~$\tau$. Then~$\End_\Gg(\Ind_{\Ll,\Pp}^\Gg\tau)$ has
generator~$T$ satisfying~$(T+1)(T-1)=0$.
\end{enumerate}
\end{lemma}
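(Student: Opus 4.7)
The plan is to exploit the commutative square displayed just before the lemma, giving
\[
\Res^\tGg_\Gg\,\Ind_{\tLl,\tPp}^\tGg\tilde\tau\ =\ \Ind_{\Ll,\Pp}^\Gg\(\Res^\tLl_\Ll\tilde\tau\),
\]
together with Lemma~\ref{lem:extend}, which controls when the restriction of an irreducible cuspidal from $\tGg^\Ff$ (resp.\ $\tLl^\Ff$) to $\Gg^\Ff$ (resp.\ $\Ll^\Ff$) remains irreducible. Since $\Gg_s^*\subseteq\Ll^*$ we have $\Ll_s^*=\Gg_s^*$, so $\tilde\tau|_{\Ll}$ is either the irreducible cuspidal $\tau$ (when $\Gg_s^*$ is connected) or a sum $\tau\oplus\tau'$ of two non-isomorphic cuspidals transposed by the $\tLl/\Ll$-action (when $\Gg_s^*$ is disconnected). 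Throughout, I will use that $w\in\Nn_\Gg(\Ll)$ also lies in $\Nn_\tGg(\tLl)$ -- since $\tLl=\Ll\cdot\Zz(\tGg)^\circ$ and $w$ commutes with the central torus -- so its action is available on both sides and the cosets in $\Nn_\Gg(\Ll)/\Ll$ and $\Nn_\tGg(\tLl)/\tLl$ are identified.

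For part~(i), assume $w\tilde\tau\simeq\tilde\tau$ and first show $w\tau\simeq\tau$. If $\Gg_s^*$ is connected then $\tilde\tau|_\Ll=\tau$, and restricting the isomorphism $w\tilde\tau\simeq\tilde\tau$ to $\Ll^\Ff$ yields the claim immediately. If $\Gg_s^*$ is disconnected then the $w$-action permutes $\{\tau,\tau'\}$; I rule out the swap by contradiction. If $w\tau\simeq\tau'$ then Mackey theory makes $\Ind_{\Ll,\Pp}^\Gg\tau$ irreducible and isomorphic to $\Ind_{\Ll,\Pp}^\Gg\tau'$, so the commutative square gives
\[
\Res^\tGg_\Gg\Ind_{\tLl,\tPp}^\tGg\tilde\tau\ =\ 2\,\Ind_{\Ll,\Pp}^\Gg\tau,
\]
with only one distinct irreducible on the right. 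But on the left, $\Ind_{\tLl,\tPp}^\tGg\tilde\tau$ decomposes as $\tilde\pi_1\oplus\tilde\pi_2$ with $\tilde\pi_1\not\simeq\tilde\pi_2$, and by Lemma~\ref{lem:extend} applied with $\Gg_s^*$ disconnected each $\tilde\pi_i|_{\Gg^\Ff}$ splits into two distinct irreducibles -- contradicting the single distinct irreducible on the right. Hence $w\tau\simeq\tau$. Both Hecke algebras are then two-dimensional, generated by the elements attached to $w$ and satisfying quadratic relations whose parameters are the ratios of the dimensions of the two irreducible constituents; since the restriction $\Res^\tGg_\Gg$ pairs up these constituents in a dimension-preserving (connected case) or uniformly dimension-halving (disconnected case) way compatible with the $w$-action, the parameters coincide and the algebras are isomorphic.

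For part~(ii), I first argue $\tilde\tau|_\Ll=\tau\oplus\tau'$. Otherwise $\tilde\tau|_\Ll=\tau$ and $\Gg_s^*$ is connected, so $\Ind_{\tLl,\tPp}^\tGg\tilde\tau$ -- irreducible because $w\tilde\tau\not\simeq\tilde\tau$ -- restricts irreducibly to $\Gg^\Ff$ by Lemma~\ref{lem:extend}; yet this restriction equals $\Ind_{\Ll,\Pp}^\Gg\tau$, which is reducible since $w\tau\simeq\tau$, a contradiction. Thus $\Gg_s^*$ is disconnected, $w\tilde\tau$ and $\tilde\tau$ share the same restriction to $\Ll^\Ff$, and $w\tilde\tau\simeq\tilde\tau\otimes\chi$ for some non-trivial character $\chi$ of $\tLl/\Ll$. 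Applying Lemma~\ref{lem:extend} to the irreducible $\Ind_{\tLl,\tPp}^\tGg\tilde\tau$ shows its restriction to $\Gg^\Ff$ is a sum $\pi_1\oplus\pi_2$ of two irreducibles of \emph{equal} dimension. The commutative square identifies this restriction with $\Ind_{\Ll,\Pp}^\Gg\tau\oplus\Ind_{\Ll,\Pp}^\Gg\tau'$, each summand being reducible into two inequivalent constituents by Harish-Chandra theory. Since the four constituents on the right sum to $\pi_1+\pi_2$ as a multiset, each $\pi_j$ appears with multiplicity two, and since the pair $\{\sigma_1,\sigma_2\}$ of constituents of $\Ind_{\Ll,\Pp}^\Gg\tau$ consists of two \emph{distinct} representations, it must equal $\{\pi_1,\pi_2\}$. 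In particular $\Ind_{\Ll,\Pp}^\Gg\tau$ has two constituents of equal dimension, whence the generator $T$ satisfies $(T+1)(T-1)=0$.

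The step I expect to be most delicate is the dimension bookkeeping in the disconnected-centre case of part~(i): one must verify that the pairing between the constituents of $\Ind_{\Ll,\Pp}^\Gg\tau$ and those obtained by restricting $\tilde\pi_1,\tilde\pi_2$ to $\Gg^\Ff$ respects the decomposition into $(-1)$- and $q^{f_\tau}$-eigenspaces of the Hecke generator, so that the two Hecke parameters genuinely agree rather than being inverses. Similarly, justifying the canonical isomorphism $\Nn_\Gg(\Ll)/\Ll\simeq\Nn_\tGg(\tLl)/\tLl$ underlying the comparison of Hecke algebras requires some care with the specific similitude embedding $\Gg\hookrightarrow\tGg$.
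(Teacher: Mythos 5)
Your overall strategy (compare the two induced representations via restriction from $\tGg^\Ff$ to $\Gg^\Ff$ and the Mackey-type identity) is the right one, but the execution has genuine errors. The most serious is the repeated invocation of Lemma~\ref{lem:extend}: that lemma is a statement about irreducible \emph{cuspidal} representations, and you apply it to the constituents $\tilde\pi_i$ of $\Ind_{\tLl,\tPp}^{\tGg}\tilde\tau$ and to $\Ind_{\tLl,\tPp}^{\tGg}\tilde\tau$ itself, none of which is cuspidal. This breaks your part~(ii) at the first step: the claimed contradiction forcing $\tilde\tau|_{\Ll}=\tau\oplus\tau'$ does not exist, and in fact the conclusion is the opposite of the truth. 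Since $\tGg^\Ff/\Gg^\Ff$ is cyclic, restriction of an irreducible of $\tGg^\Ff$ to $\Gg^\Ff$ is multiplicity-free with at most two constituents; if $\tilde\tau|_{\Ll}$ were $\tau\oplus\tau'$ then $\Res^{\tGg}_{\Gg}\Ind_{\tLl,\tPp}^{\tGg}\tilde\tau=\Ind_{\Ll,\Pp}^{\Gg}\tau\oplus\Ind_{\Ll,\Pp}^{\Gg}\tau'$ would have at least three constituents while $\Ind_{\tLl,\tPp}^{\tGg}\tilde\tau$ is irreducible in case~(ii) — so in fact $\tilde\tau|_{\Ll}=\tau$ there. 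Your subsequent bookkeeping ("each $\pi_j$ appears with multiplicity two") contradicts both multiplicity-freeness and your own assertion that the restriction is $\pi_1\oplus\pi_2$. The correct and much shorter argument, which is the one the paper gives, is: $\Ind_{\tLl,\tPp}^{\tGg}\tilde\tau$ is irreducible, so by Mackey all the constituents $\pi_i^\gamma$ of its restriction are $\tGg^\Ff$-conjugate, hence of equal dimension; in particular $\dim\pi_1=\dim\pi_2$ and $f_\tau=0$.

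In part~(i) the step you flag as "delicate" is indeed the crux, and you have not closed it: nothing in your argument rules out the possibility that $\Res^{\tGg}_{\Gg}\tilde\pi_1$ splits while $\Res^{\tGg}_{\Gg}\tilde\pi_2$ does not, which would change the dimension ratio by a factor of $2$. (Whether a non-cuspidal constituent splits on restriction is not governed by connectedness of $\Gg_s^*$, so Lemma~\ref{lem:extend} cannot supply the uniformity.) The paper's Mackey identity resolves this cleanly: writing $\Res^{\tGg}_{\Gg}\Ind_{\tLl,\tPp}^{\tGg}\tilde\tau=\bigoplus_{\gamma\in\tLl/\Nn_{\tLl}(\tau)}\bigoplus_{i=1}^{\ell}\pi_i^\gamma$ and using $\dim\tilde\pi_1>\dim\tilde\pi_2$ (strict, by Lusztig's Theorem~8.6 for the connected-centre group), one gets $\ell=2$ and $\Res^{\tGg}_{\Gg}\tilde\pi_i=\bigoplus_\gamma\pi_i^\gamma$, so both dimensions are divided by the \emph{same} index $\lvert\tLl/\Nn_{\tLl}(\tau)\rvert$ and the parameter $q^{f_\tau}=\dim\pi_1/\dim\pi_2=\dim\tilde\pi_1/\dim\tilde\pi_2$ is unchanged. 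This also disposes of the swap $w\tau\simeq\tau'$ in one line (it would force $\ell=1$, incompatible with two non-isomorphic $\tilde\pi_i$), without the detour through Lemma~\ref{lem:extend}. I recommend you redo both parts starting from the single displayed Mackey decomposition.
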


\begin{proof}
We write
\[
\Ind_{\Ll,\Pp}^\Gg\tau\ =\ \bigoplus_{i=1}^\ell\pi_i,
\qquad\text{where }
\ell=\begin{cases}
2,&\text{ if~$w$ normalizes~$\tau$,}\\
1,&\text{ otherwise.}
\end{cases}
\]
If~$\ell=2$ then we order the terms so that~$\dim(\pi_1)\ge\dim(\pi_2)$,
and~$\End_{\Gg}(\Ind_{\Ll,\Pp}^{\Gg}\tau)$ is a two-dimensional
algebra with a quadratic generator~$T$ satisfying a relation of the
form
\[
(T+1)(T-q^{f_\tau})\ =\ 0,\qquad q^{f_\tau}=\frac{\dim(\pi_1)}{\dim(\pi_2)}.
\]

In either case, from Mackey we also have
\begin{equation}\label{eqn:taumackey}
\Res_{\Gg}^{\tGg}\Ind_{\tLl,\tPp}^{\tGg}\tilde\tau
\ =\ \bigoplus_{\g\in\tLl/\Nn_{\tLl}(\tau)}\Ind_{\Ll,\Pp}^\Gg\tau^\g
\ =\ \bigoplus_{\g\in\tLl/\Nn_{\tLl}(\tau)}\bigoplus_{i=1}^\ell\pi_i^\g.
\end{equation}

\ref{lem:paramtau.i} Suppose~$w$ normalizes~$\tilde\tau$, in which
case
\[
\Ind_{\tLl,\tPp}^\tGg\tilde\tau\ =\ \tilde\pi_1\oplus\tilde\pi_2, 
\qquad\dim(\tilde\pi_1)>\dim(\tilde\pi_2).
\]
Since the dimensions are distinct, comparing
with~\eqref{eqn:taumackey}, we must have~$\ell=2$ and
\[
\Res_{\Gg}^{\tGg}\tilde\pi_i = \bigoplus_{\g\in\tLl/\Nn_{\tLl}(\tau)}\pi_i^\g;
\]
in particular we
get~$\dim(\pi_1)/\dim(\pi_2)=\dim(\tilde\pi_1)/\dim(\tilde\pi_2)$ so
that the relations of the quadratic generators
in~$\End_\Gg(\Ind_{\Ll,\Pp}^\Gg\tau)$
and~$\End_\tGg(\Ind_{\tLl,\tPp}^\tGg\tilde\tau)$ are the same.

\ref{lem:paramtau.ii} Suppose now that~$w$ does not
normalize~$\tilde\tau$ but does normalize~$\tau$ (so that~$\ell=2$).
Then~$\Ind_{\tLl,\tPp}^\tGg\tilde\tau$ is irreducible so the two pieces
of~$\Ind_{\Ll,\Pp}^\Gg\tau$ are conjugate under~$\tGg^\Ff$ so of the
same dimension; in particular we get~$f_\tau=0$, as required.
\end{proof}

It remains now to compute when the representation~$\tilde\tau$
in~$\Ee(\Ll^\Ff,\tilde s)$ containing~$\tau$ is normalized
by~$w$. Ultimately, the answer depends on the
family of groups in question, but we can already make a preliminary
reduction. We assume from now on that~$w$ does indeed normalize~$\tau$.

We write~$\Ll^\Ff\simeq\GL_n^\Ff\times\Gg_0^\Ff$,
where~$\Gg_0$ is a (possibly trivial) classical group in the same
family as~$\Gg$, and~$\tau=\tau_1\otimes\tau_0$, so that~$\tau_1$ is a
self-dual irreducible cuspidal representation of~$\GL_n^\Ff$. We
write~$\tGg_0$ for the similitude group into which~$\Gg_0$ embeds;
if~$\Gg_0$ is a trivial group (\ie acts on a trivial space)
then~$\tGg_0$ is the multiplicative group.

We also have an isomorphism~$\tLl^\Ff\simeq\GL_n^\Ff\times\tGg_0^\Ff$,
which can be seen as follows. We choose a Witt 
basis~$\ee_N^-,\ldots,\ee_1^-,\ee_1^+,\ldots,\ee_N^+$ for the space on
which~$\Gg^\Ff$ acts, with respect to which~$\Ll^\Ff$ is standard (so
it is the stabilizer of the
subspace~$\la\ee_N^-,\ldots,\ee_{N-n+1}^-\ra$), where we have
changed from our usual notation in the case of a non-split special
orthogonal group, with~$\ee_1^-,\ee_1^+$ a basis for the
anisotropic part of the space. We
write~$\mu:\tGg_0^\Ff\to\FF_q^\times$ for the similitude map, which is
the identity map when~$\Gg_0$ is trivial. Then we get an
isomorphism~$\GL_n^\Ff\times\tGg_0^\Ff\to\tLl$ from the map
\[
(g,h)\ \mapsto\ \diag(g,h,\mu(h)w_n (g^{-1})^\T w_n^{-1}),\qquad
\text{for }g\in\GL_n^\Ff,\ h\in\tGg_0^\Ff,
\]
where~$w_n$ is the antidiagonal element of~$\GL_n^\Ff$ with all
non-zero entries equal to~$1$ (a representative for the longest
element of the Weyl group),~$g^\T$ denotes the transpose matrix, and
the matrix on the right hand side is block diagonal. (Note that,
when~$\Gg_0$ is trivial, the central term~$h$ in the block diagonal
matrix is acting on a trivial space, so is not really present.)
Thus we can write~$\tilde\tau=\tau_1\otimes\tilde\tau_0$,
where~$\tilde\tau_0$ is an irreducible cuspidal representation
of~$\tGg_0^\Ff$ whose restriction to~$\Gg_0^\Ff$ contains~$\tau_0$.

We denote by~$\e$ the sign such that~$\Gg$ preserves an~$\e$-symmetric
form. Then, if either~$\Gg$ is a symplectic group or~$n$ is even, we
can take the representative~$w$ given by
\[
w(\ee_i^\pm)\ =\ \begin{cases}
\ee_i^\pm&\text{ if }1\le i\le N-n, \\
\e\ee_i^{\mp}&\text{ if }N-n<i\le N;
\end{cases}
\]
thus~$w$ is block antidiagonal (for the block sizes corresponding
to~$\Ll$), and the conjugation action of~$w$ 
on~$\tLl$, transported back to~$\GL_n^\Ff\times\tGg_0^\Ff$, is given
by
\begin{equation}\label{eqn:conjw}
(g,h)\ \mapsto\ (\mu(h)(g^{-1})^\T,h).
\end{equation}
Since~$\tau_1$ is self-dual, if its central
character is also trivial then the map~\eqref{eqn:conjw} clearly
intertwines~$\tau_1\otimes\tilde\tau_0$ with itself. The only case
when the central character is non-trivial is when~$\tau_1$ is the
quadratic character, where we see that~\eqref{eqn:conjw}
intertwines~$\tau_1\otimes\tilde\tau_0$ with itself if and only
if~$\tilde\tau_0\simeq\tilde\tau_0\otimes(\tau_1\circ\mu)$. 

This leaves the case of even special orthogonal groups
with~$n=1$, where we need a different representative~$w$. 
Note that, in this case, we do not have that~$\Gg_0$ is
the trivial group, since we have excluded the case~$\Gg=\SO_2^+$. 
We have an identification~$\Gg_0\simeq\SO_{2(N-1)}^\pm$, from
the action on~$\la\ee_{N-1}^-,\ldots,\ee_{N-1}^+\ra$, and we
pick~$c_0\in\O_{2(N-1)}^\pm\setminus\SO_{2(N-1)}^\pm$. Then we can
take~$w$ to be the element given by
\[
\begin{cases} w(\ee_N^\pm)=\ee_N^\mp, \\
w|_{\la\ee_{N-1}^-,\ldots,\ee_{N-1}^+\ra}=c_0. 
\end{cases}
\]
The action of~$w$ on~$\tLl$ is given by
\[
(g,h)\ \mapsto\ (\mu(h)g,c_0hc_0^{-1})
\]
and we see that~$\tilde\tau$ is normalized by~$w$ if and only
if~$\tilde\tau_0\simeq\tilde\tau_0^{c_0}\otimes(\tau_1\circ\mu)$. 

If we set~$c_0=1$ in the case of symplectic groups, we can unify the
discussion above into the following statement.

\begin{lemma}\label{lem:wnormrhotilde}
\begin{enumerate}
\item If~$n>1$ then~$\tilde\tau$ is normalized by~$w$.
\item\label{lem:wnormrhotilde.ii} 
If~$n=1$ then~$\tilde\tau$ is normalized by~$w$ if and only
if~$\tilde\tau_0\simeq\tilde\tau_0^{c_0}\otimes(\tau_1\circ\mu)$.
\end{enumerate}
\end{lemma}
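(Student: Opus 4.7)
The plan is to prove both parts by directly computing $\tilde\tau^w$ using the conjugation formulas for~$w$ on~$\tLl$ that were derived in the two paragraphs immediately preceding the statement, and comparing with $\tilde\tau=\tau_1\otimes\tilde\tau_0$. First I would apply the generic formula $(g,h)\mapsto(\mu(h)(g^{-1})^\T,h)$, valid when $n>1$ or when~$\Gg$ is not an even special orthogonal group. This gives
\[
\tilde\tau^w(g,h)\ =\ \tau_1(\mu(h)(g^{-1})^\T)\,\tilde\tau_0(h)\ =\ \omega_{\tau_1}(\mu(h))\,\tau_1((g^{-1})^\T)\,\tilde\tau_0(h),
\]
where~$\omega_{\tau_1}$ is the central character of~$\tau_1$; self-duality of~$\tau_1$ supplies an intertwiner between $\tau_1((g^{-1})^\T)$ and $\tau_1(g)$, so the question $\tilde\tau^w\simeq\tilde\tau$ reduces to whether $\tilde\tau_0\simeq\tilde\tau_0\otimes(\omega_{\tau_1}\circ\mu)$.

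For part~(i), the hypothesis $n>1$ puts us in the first situation above, and it is recalled just before the lemma that a self-dual irreducible cuspidal representation of~$\GL_n(\FF_q)$ has trivial central character whenever~$n>1$ (the quadratic character, which is the only self-dual cuspidal with non-trivial central character, lives at~$n=1$); hence $\omega_{\tau_1}\circ\mu$ is trivial and~$w$ normalizes~$\tilde\tau$ automatically. For part~(ii), the convention~$c_0=1$ means that when~$\Gg$ is not an even special orthogonal group, the same computation applies and, since~$\tau_1$ is now a character so that $\omega_{\tau_1}=\tau_1$, the condition becomes $\tilde\tau_0\simeq\tilde\tau_0\otimes(\tau_1\circ\mu)$, which is the claimed condition with~$c_0=1$. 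If instead~$\Gg$ is an even special orthogonal group and~$n=1$, I would apply the alternative formula $(g,h)\mapsto(\mu(h)g,c_0hc_0^{-1})$ to obtain
\[
\tilde\tau^w(g,h)\ =\ \tau_1(\mu(h)g)\,\tilde\tau_0(c_0hc_0^{-1})\ =\ \tau_1(\mu(h))\,\tau_1(g)\,\tilde\tau_0^{c_0}(h),
\]
so that $\tilde\tau^w\simeq\tilde\tau$ if and only if $\tilde\tau_0\simeq\tilde\tau_0^{c_0}\otimes(\tau_1\circ\mu)$, as required.

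The argument is essentially a repackaging of computations already carried out in the paragraphs preceding the lemma, so there is no genuine obstacle. The only input of substance is the triviality of the central character of self-dual cuspidals of~$\GL_n(\FF_q)$ for~$n>1$, which has been invoked by reference; the remaining steps are manipulations of the explicit conjugation formulas, together with the observation that~$\mu(h)$ is a scalar, so that~$\tau_1$ factors through the central character on it.
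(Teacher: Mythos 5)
Your proposal is correct and follows essentially the same route as the paper: the lemma there is stated as a summary of the computation carried out in the paragraphs immediately preceding it, namely applying the explicit conjugation formulas for~$w$ on~$\tLl$, using self-duality of~$\tau_1$ together with the triviality of its central character for~$n>1$, and handling the even special orthogonal case with~$n=1$ via the alternative representative involving~$c_0$. The only cosmetic difference is that you phrase the validity of the first formula as ``$n>1$ or~$\Gg$ not even special orthogonal'' where the paper says ``$\Gg$ symplectic or~$n$ even,'' but these coincide here since self-dual cuspidals of~$\GL_n(\FF_q)$ exist only for~$n=1$ or~$n$ even.
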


In the following subsections, we analyze precisely when the conditions
in Lemma~\ref{lem:wnormrhotilde}\ref{lem:wnormrhotilde.ii} are
satisfied, in terms of the eigenvalues of the semisimple element~$s$
such that~$\tau\in\Ee(\Ll^\Ff,s)$. 

\subsection{Symplectic groups}

We begin with the case of symplectic groups, so that we are in
case~(ii) of Section~\ref{SS.disconnected}. Thus we have a cuspidal
representation~$\tau$ in~$\Ee(\Ll^\Ff,s)$, for some maximal proper
Levi subgroup~$\Ll$ of~$\Gg$, and a cuspidal representation~$\tilde\tau$
in~$\Ee(\tLl^\Ff,\tilde s)$ whose restriction to~$\Ll$
contains~$\tau$. We denote by~$w$ a representative for the non-trivial element
of~$\Nn_\Gg(\Ll)/\Ll$, which we assume normalizes~$\tau$.

We write~$\Ll^\Ff\simeq\GL_n^\Ff\times\Gg_0^\Ff$,
where~$\Gg_0$ is a (possibly trivial) symplectic group,
and~$\tau=\tau_1\otimes\tau_0$, so that~$\tau_1$ is a self-dual
irreducible cuspidal representation
of~$\GL_n^\Ff$. Then~$\tau_1\in\Ee(\GL_n^\Ff,s_1)$
and~$\tau_0\in\Ee(\Gg_0^\Ff,s_0)$, for some semisimple
elements~$s_0,s_1$ of the respective dual groups. Note that,
if~$\Gg_0$ is the trivial symplectic group then its dual group
is~$\SO_1$ so that~$s_0=1$.

\begin{lemma}\label{lem:wnormrhotildeSp}
The representation~$\tilde\tau$ is normalized by~$w$ unless~$\tau_1$
is the non-trivial quadratic character of~$\GL_1^\Ff$ and~$-1$ is not an
eigenvalue of~$s_0$.
\end{lemma}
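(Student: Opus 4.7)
The plan is to reduce to $n=1$ via Lemma~\ref{lem:wnormrhotilde} and then use Clifford theory for the normal inclusion $\Gg_0^\Ff=\Sp_{2m}^\Ff\subset\tGg_0^\Ff=\GSp_{2m}^\Ff$. For $n>1$ the conclusion is automatic; for $n=1$ (with $c_0=1$ in the symplectic case) the self-dual character $\tau_1$ of $\FF_q^\times$ is either trivial, in which case $\tau_1\circ\mu=1$ and the required condition $\tilde\tau_0\simeq\tilde\tau_0\otimes(\tau_1\circ\mu)$ holds automatically, or else equal to the nontrivial quadratic character $\chi$; I focus on the latter.

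The key observation is that $\chi\circ\mu$ is trivial on $\Gg_0^\Ff$ (since $\mu$ is), so $\tilde\tau_0$ and $\tilde\tau_0\otimes(\chi\circ\mu)$ restrict to the same representation of $\Gg_0^\Ff$. I then apply Lemma~\ref{lem:extend} to the pair $\Gg_0\subset\tGg_0$: since $\Gg_0^*=\SO_{2m+1}$ has odd-dimensional standard representation, $+1$ is always an eigenvalue of $s_0$, so Lemma~\ref{lem:extend} says that $\tilde\tau_0|_{\Gg_0^\Ff}$ is irreducible exactly when $-1$ is not an eigenvalue of $s_0$.

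In the irreducible-restriction case, $\tilde\tau_0$ and $\tilde\tau_0\otimes(\chi\circ\mu)$ are two extensions of a common irreducible representation of $\Gg_0^\Ff$ to $\tGg_0^\Ff$; by Clifford theory any two such extensions differ by a character of $\tGg_0^\Ff/\Gg_0^\Ff\simeq\FF_q^\times$, so nontriviality of $\chi\circ\mu$ makes them inequivalent. In the reducible-restriction case, write $\tilde\tau_0|_{\Gg_0^\Ff}=\pi_1\oplus\pi_2$ with $\pi_2$ a $\tGg_0^\Ff$-conjugate of $\pi_1$, and let $H$ be the stabilizer of $\pi_1$: an index-$2$ subgroup containing $\Gg_0^\Ff$, so $H/\Gg_0^\Ff$ is the subgroup of squares in $\FF_q^\times$. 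By Clifford theory one obtains $(q-1)/2$ extensions $\tilde\pi_1$ of $\pi_1$ to $H$, each inducing a distinct irreducible of $\tGg_0^\Ff$ whose restriction is $\pi_1\oplus\pi_2$; the twisting action of $\hat{\tGg_0^\Ff/\Gg_0^\Ff}$ on these representations is transitive via the identity $\Ind_H^{\tGg_0^\Ff}(\tilde\pi_1)\otimes\eta\simeq\Ind_H^{\tGg_0^\Ff}(\tilde\pi_1\otimes\eta|_H)$, so its stabilizer has order $2$ and, being a subgroup of $\hat{\FF_q^\times}$, must equal $\{1,\chi\}$. Hence $\tilde\tau_0\otimes(\chi\circ\mu)\simeq\tilde\tau_0$ exactly in this case.

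The main obstacle is the Clifford-theoretic counting in the reducible-restriction case: one must check both that the $(q-1)/2$ extensions of $\pi_1$ to $H$ induce to pairwise distinct irreducibles of $\tGg_0^\Ff$, and that the twisting action is transitive with stabilizer $\{1,\chi\}$. Both reduce to standard manipulations with induction and to the surjectivity of the restriction map $\hat{\tGg_0^\Ff/\Gg_0^\Ff}\to\hat{H/\Gg_0^\Ff}$ (whose kernel is the order-$2$ subgroup generated by $\chi$). Combining the two cases yields exactly the statement of the lemma.
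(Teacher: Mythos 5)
Your proof is correct and follows essentially the same route as the paper: reduce via Lemma~\ref{lem:wnormrhotilde} to the case where~$\tau_1$ is the non-trivial quadratic character, translate the condition to~$\tilde\tau_0\otimes(\tau_1\circ\mu)\simeq\tilde\tau_0$, and decide this by Clifford theory for~$\Gg_0^\Ff\trianglelefteq\tGg_0^\Ff$ together with Lemma~\ref{lem:extend} (which the paper invokes in the remark following the statement). The only difference is that the paper disposes of the Clifford-theoretic step in one line, using the standard order-two criterion that~$\tilde\tau_0\otimes\chi\simeq\tilde\tau_0$ if and only if~$\tilde\tau_0$ is reducible on the index-two subgroup~$\Zz(\tGg_0^\Ff)\Gg_0^\Ff=\Ker\chi$, whereas you re-derive this criterion by hand, via Gallagher's theorem and the Clifford correspondence over the full cyclic quotient~$\tGg_0^\Ff/\Gg_0^\Ff\simeq k^\times$.
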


We remark that, since~$1$ is always an eigenvalue of~$s_0$ in the case
of symplectic groups, the condition that~$-1$ not be an eigenvalue
of~$s_0$ is equivalent to the condition that~$\tau_0$ extend to the
similitude group~$\tGg_0^\Ff$, by Lemma~\ref{lem:extend}.

\begin{proof}
By Lemma~\ref{lem:wnormrhotilde}, we only need to consider the case
that~$\tau_1$ is the non-trivial quadratic character; in that
case,~$w$ normalizes~$\tilde\tau$ if and only
if~$\tilde\tau_0\simeq\tilde\tau_0\otimes\chi$, where we recall
that~$\tilde\tau_0$ is an irreducible cuspidal representation of the
similitude group~$\tGg_0^\Ff=\GSp_{2(N-1)}^{\Ff}$ containing~$\tau_0$,
and~$\chi=\tau_1\circ\mu$ is the non-trivial quadratic character
of~$\tGg_0^\Ff$. Denote by~$\Zz(\tGg_0^\Ff)$ the centre
of~$\tGg_0^\Ff$; then~$\tilde\tau_0\otimes\chi\simeq\tilde\tau_0$ if
and only if the restriction of~$\tilde\tau_0$ to the index two
subgroup~$\Zz(\tGg_0^\Ff)\Gg_0^\Ff$ is reducible, \ie~$\tilde\tau_0$
restricts reducibly
to~$\Gg_0^\Ff$. Thus~$\tilde\tau^w\simeq\tilde\tau$ if and only
if~$\tau_0$ does \emph{not} extend to~$\tilde\Gg_0^\Ff$, and we are done.
\end{proof}

\subsection{Even special orthogonal groups}

Now we turn to the case of even-dimensional orthogonal groups, so 
that we are in
case~(iii) of Section~\ref{SS.disconnected}. Here, as well as proving
the analogue of Lemma~\ref{lem:wnormrhotildeSp}, we need to consider
the cases, seen in Section~\ref{S.reduction}, 
where the parameter in the affine Hecke algebra is zero, rather than
matching the parameter in the finite Hecke algebra for the connected
reductive quotient. This happens precisely when either we have a
``trivial orthogonal group'' or an irreducible cuspidal representation
of an even-dimensional special orthogonal group which is not
normalized by the full orthogonal group. Thus we must also identify when
this happens, in the language of the previous sections. We begin with
this question, since the answer is also needed for the proof of the
analogue of Lemma~\ref{lem:wnormrhotildeSp}.

\begin{proposition}\label{prop:orthogonalextension}
Let~$\tau\in\Ee(\SO_{2N}^{\pm,\Ff},s)$ be an irreducible cuspidal
representation, and let~$\tilde\tau\in\Ee(\GSO_{2N}^{\pm,\Ff},\tilde
s)$ be an irreducible cuspidal representations, where~$\tilde s$ is a
semisimple element mapping to~$s$. 
\begin{enumerate}
\item\label{prop:orth.ii} 
$\tilde\tau$ extends to a representation of~$\GO_{2N}^{\pm,\Ff}$
if and only if~$1$ is an eigenvalue of~$s$. 
\item\label{prop:orth.i} 
$\tau$ extends to a representation of~$\O_{2N}^{\pm,\Ff}$ if and
only if at least one of~$\pm 1$ is an eigenvalue of~$s$.
\end{enumerate}
\end{proposition}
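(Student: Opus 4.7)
My plan is to prove part~(i) first and derive part~(ii) from it using Clifford theory and Lemma~\ref{lem:extend}.

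For part~(i), since $[\GO_{2N}^{\pm,\Ff}:\GSO_{2N}^{\pm,\Ff}] = 2$, standard Clifford theory shows that $\tilde\tau$ extends to $\GO^\Ff$ if and only if $\tilde\tau^c \simeq \tilde\tau$ for some $c \in \GO^\Ff \setminus \GSO^\Ff$. Conjugation by $c$ is an algebraic automorphism of $\GSO^\pm_{2N}$ commuting with the Frobenius, and induces a corresponding outer automorphism on the dual group $\tGg^*$. By the Jordan decomposition, $\tilde\tau$ corresponds to a pair consisting of a $\tGg^{*,\Ff}$-conjugacy class of a semisimple element together with a unipotent character of its centralizer, and this correspondence is equivariant with respect to outer automorphisms of this kind. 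By Lusztig's classification, a classical group admits at most one unipotent cuspidal representation (when it exists), which is therefore automatically invariant under every algebraic automorphism, so the question reduces to whether the $\tGg^{*,\Ff}$-conjugacy class of $\tilde s$ is fixed. Decomposing the standard representation into $\tilde s$-eigenspaces and analyzing the centralizer of $\tilde s$ in the ambient disconnected group, this fixedness is equivalent to the existence of a non-trivial self-dual eigenspace of $\tilde s$ (on which the orthogonal form is nondegenerate, providing a determinant-$(-1)$ centralizer element). Translating through the dual map $\tGg^* \to \Gg^* = \SO_{2N}^{\pm,*}$ (which absorbs the similitude character $\mu(\tilde s)$), this condition becomes ``$1$ is an eigenvalue of $s$''.

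For part~(ii), I combine part~(i) with Lemma~\ref{lem:extend} and the parallel centralizer analysis for $\SO \subset \O$. By the analogous eigenspace argument, the $\O^{*,\Ff}$-conjugacy class of $s$ equals its $\SO^{*,\Ff}$-class exactly when the $\O^{*,\Ff}$-centralizer of $s$ contains a determinant-$(-1)$ element, equivalently when at least one of $\pm 1$ is an eigenvalue of $s$. When neither is an eigenvalue, the rational class splits, the outer automorphism sends $\tau$ into a different rational Lusztig series, and $\tau$ does not extend. When at least one is, the series $\Ee(\SO^\Ff, s)$ is stable and $\tau^{c_0} \simeq \tau$ follows from Jordan decomposition (in a form valid for disconnected centralizers when both $\pm 1$ are eigenvalues) together with the uniqueness of unipotent cuspidals. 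The most delicate situation is when both $\pm 1$ are eigenvalues: then $\tilde\tau|_{\SO^\Ff} = \tau \oplus \tau'$ splits, and one must verify via part~(i) that $c_0$ fixes each summand individually rather than swapping the pair.

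I expect the main obstacle to be the bookkeeping at the end of the first paragraph: identifying precisely the image $s$ of $\tilde s$ in $\Gg^{*,\Ff}$ under the dual map $\tGg^*\to\Gg^*$, and verifying that ``$1$ is an eigenvalue of $s$'' on the $\SO^{\pm,*}$-side captures exactly the presence of a self-dual eigenspace of $\tilde s$ on the $\tGg^*$-side. The asymmetry between ``$1$ alone'' in part~(i) and ``either of $\pm 1$'' in part~(ii) is a subtle consequence of how the similitude character $\mu(\tilde s)$ rescales eigenvalues under this map, and will require careful tracking.
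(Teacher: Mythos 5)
Your overall skeleton --- reduce extendability to whether the outer involution fixes the rational class of the semisimple parameter, using that each relevant Lusztig series contains a unique cuspidal representation --- is also the paper's, but both of the points you yourself flag as delicate are left unresolved, and in the first case the mechanism you propose is wrong. For part~(i), the asymmetry between $+1$ and $-1$ has nothing to do with the similitude character rescaling eigenvalues. The dual group of~$\GSO_{2N}^{\pm}$ is the special Clifford group~$\C^0(\Vv)$, sitting inside the full Clifford group~$\C(\Vv)=\C^0(\Vv)\sqcup\C^1(\Vv)$, and the question is whether the centralizer of~$\tilde s$ in~$\C(\Vv)$ meets~$\C^1(\Vv)$. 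An anisotropic eigenvector~$\vv$ of~$s$ with eigenvalue~$1$ is itself an invertible element of~$\C^1(\Vv)$ commuting with~$\tilde s$; but an anisotropic $(-1)$-eigenvector satisfies~$\tilde s\vv\tilde s^{-1}=s(\vv)=-\vv$ and so does \emph{not} commute, and when~$1$ is not an eigenvalue one shows (this is the heart of the paper's proof) that every element of the odd part of the Clifford algebra commuting with~$\tilde s$ is a combination of odd products of isotropic eigenvectors, any such element squares to zero, and hence none is invertible. Your criterion ``existence of a self-dual eigenspace providing a determinant~$-1$ centralizer element'' is the computation for~$\SO\subset\O$, not for~$\C^0\subset\C$; taken at face value it yields ``at least one of~$\pm1$ is an eigenvalue'', which is the wrong answer for~(i).

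For part~(ii), the genuinely hard case is when \emph{both}~$\pm1$ are eigenvalues: then~$\Ee(\SO_{2N}^{\pm,\Ff},s)$ contains two cuspidal representations, the ``unique cuspidal in the series'' argument breaks down, and one must rule out that~$c$ swaps the two constituents of~$\tilde\tau|_{\SO_{2N}^{\pm,\Ff}}$. You write that ``one must verify via part~(i)'' that this does not happen, but part~(i) only gives that~$\tilde\tau$ itself is~$c$-stable, which is perfectly compatible with~$c$ interchanging its two constituents; no verification is actually supplied. The paper closes this gap with a separate argument: it realizes~$\SO_{2N}^{\pm,\Ff}$ as the classical factor of the Levi~$\GL_1^\Ff\times\SO_{2N}^{\pm,\Ff}$ of~$\SO_{2(N+1)}^{\pm,\Ff}$, uses Lemma~\ref{lem:wnormrhotilde}\ref{lem:wnormrhotilde.ii} together with part~(i) to see that the Weyl element~$w$ normalizes~$\boI\otimes\tilde\tau$, and then invokes Lemma~\ref{lem:paramtau}\ref{lem:paramtau.i} (a dimension-count via Mackey restriction from the similitude group) to conclude that~$w$ normalizes~$\boI\otimes\tau$, hence~$\tau^{c}\simeq\tau$. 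Without an argument of this kind --- or an appeal to a Jordan decomposition valid for disconnected centralizers, such as Waldspurger's classification in the quadratic-unipotent case, which you would then have to extend to general~$s$ --- your proof of~(ii) is incomplete.
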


We remark that in the case that~$s^2=1$ (that is,~$\tau$ is a
\emph{quadratic unipotent representation}),~\ref{prop:orth.ii} is
proved in~\cite[Lemma~8.9]{L77}, while~\ref{prop:orth.i} is proved
in~\cite{Wald} (see the proof of \emph{op.\ cit.}
Proposition~4.3). Our proofs in the general case are similar.

\begin{proof}
In order to prove this, we need to recall a little
about the dual group of~$\GSO_{2N}^\pm$. This dual group is the
\emph{special Clifford group}~$\C^0(\Vv)$, which sits in an exact sequence
\[
1\to\GG_m\to\C^0(\Vv)\to\SO(\Vv)\to 1,
\]
which is exact on points by Hilbert's Theorem~90. It is the connected
component of the full Clifford group~$\C(\Vv)$, which sits in
a similar exact sequence, mapping onto the full orthogonal
group~$\O(\Vv)$. The Clifford group is a subgroup of the group of
invertible elements of the Clifford algebra~$\A=\A(\Vv)$, which
is~$\ZZ/2\ZZ$-graded~$\A=\A^0\oplus \A^1$;
then~$\C(\Vv)=\C^0(\Vv)\sqcup\C^1(\Vv)$, with~$\C^i(\Vv)=\C(\Vv)\cap
\A^i$, and the special Clifford group is a subgroup of index two in the
full Clifford group. 

We will need to know when the semisimple element~$\tilde s\in\C^0(\Vv)$
of the special Clifford group is centralized by some element
of~$\C^1(\Vv)$. If~$1$ is an eigenvalue of~$s$ then it has an
anisotropic eigenvector~$\vv$ with eigenvalue~$1$ and one checks
that~$\vv\in\C^1(\Vv)$ is centralized by~$\tilde s$. If~$1$ is not an
eigenvalue of~$s$, then the elements of~$\A^1$ which commute
with~$\tilde s$ are linear combinations of elements of the form
\[
\vv_1\cdots\vv_r,
\]
with~$\vv_i$ linearly independent eigenvectors of~$s$ with
eigenvalue~$\z_i$, such that~$\prod_{i=1}^r\z_i=1$ and~$r$ is
odd. However, any two such elements anti-commute (since~$r$ is odd)
and any such element squares to~$0$, since the~$\vv_i$ are isotropic
unless~$\z_i=-1$ and we cannot have all~$\z_i=-1$, since their product
is~$1$ and~$r$ is odd. Thus any element of~$a\in \A^1$ which commutes
with~$s$ satisfies~$a^2=0$ so is non-invertible. Thus no element
of~$\C^1(\Vv)$ commutes with~$\tilde s$.

Finally, we pick~$c\in\O_{2N}^{\pm,\Ff}\setminus\SO_{2N}^{\pm,\Ff}$
and we are ready to begin the proof.

\ref{prop:orth.ii}\ The representation~$\tilde\tau$ extends
to~$\GO_{2N}^{\pm,\Ff}$ if and only if it is normalized by
some~$c$. 
Now~$\tau^c\in\Ee(\SO_{2N}^{\pm,\Ff},\tilde c^{-1}\tilde s\tilde c)$,
for some~$\tilde c\in\C^1(\Vv)$.
On the other, hand, these Lusztig series contain only one cuspidal
representation each, so~$\tilde\tau\simeq\tilde\tau^c$ if and only
if~$\tilde s$ is conjugate in~$\C^0(\Vv)$ 
to~$\tilde c^{-1}\tilde s\tilde c$, that
is, if and only if the centralizer in~$\C(\Vv)$ of~$\tilde s$
is~\emph{not} contained in the special Clifford group~$\C^0(\Vv)$. 
However, we have seen above that this happens if and only if~$1$ is an
eigenvalue of~$s$.

\ref{prop:orth.i}\ The proof begins in a similar way. The
representation~$\tau$ extends to~$\O_{2N}^{\pm,\Ff}$ if and only if it
is normalized by~$c$. In this
case,~$\tau^c\in\Ee(\SO_{2N}^{\pm,\Ff},c^{-1}sc)$ and, if this Lusztig
series contains only one cuspidal representation,
then~$\tau\simeq\tau^c$ if and only if~$s$ is conjugate
in~$\SO_{2N}^{\pm,\Ff}$ to~$c^{-1}sc$, that is, if and only if the
centralizer in~$\O_{2N}^{\pm,\Ff}$ of~$s$ is~\emph{not} contained in the special
orthogonal group. On the other hand, we already
know that the Lusztig series contains only one cuspidal representation if and only if
at most one of~$\pm 1$ is an eigenvalue of~$s$, while the centralizer
of~$s$ is contained in~$\SO_{2N}^{\pm,\Ff}$ if and only if
neither~$\pm 1$ is an eigenvalue of~$s$. 

This leaves the case when \emph{both}~$\pm 1$ are eigenvalues
of~$s$. Let~$\tilde\tau$ be an irreducible cuspidal representation
in~$\Ee(\GSO_{2N}^{\pm,\Ff},\tilde s)$ whose restriction
to~$\SO_{2N}^{\pm,\Ff}$ contains~$\tau$. By~\ref{prop:orth.ii}, this
representation is normalized by~$c$.

Now we consider the representation~$\boI\otimes\tau$
of~$\Ll^\Ff=\GL_1^\Ff\times\SO_{2N}^{\pm,\Ff}$, a Levi subgroup
of~$\SO_{2(N+1)}^{\pm,\Ff}$. As at the end of
Section~\ref{SS.disconnected}, we choose a Witt 
basis~$\ee_{N+1}^-,\ldots,\ee_{N+1}^+$ with
respect to which~$\Ll^\Ff$ is standard and denote by~$w$ the element
of~$\SO_{2(N+1)}^{\pm,\Ff}$ given by
\[
\begin{cases} w(\ee_{N+1}^\pm)=\ee_{N+1}^\mp, \\
w|_{\la\ee_{N}^-,\ldots,\ee_{N}^+\ra}=c. 
\end{cases}
\]
Similarly, we have the representation~$\boI\otimes\tilde\tau$
of~$\tLl^\Ff=\GL_1^\Ff\times\GSO_{2N}^{\pm,\Ff}$, which is normalized
by~$w$, by Lemma~\ref{lem:wnormrhotilde}\ref{lem:wnormrhotilde.ii}. 
But then Lemma~\ref{lem:paramtau}\ref{lem:paramtau.i} implies
that~$w$ also normalizes~$\boI\otimes\tau$, whence~$\tau$ is
normalized by~$c$. Thus~$\tau$ extends to~$\O_{2N}^{\pm,\Ff}$, as required.
\end{proof}

Now we return to the notation at the end of
section~\ref{SS.disconnected}. Thus, with~$\Gg=\SO_{2N}^{\pm}$
and~$\tGg=\GSO_{2N}^{\pm}$, we have a cuspidal
representation~$\tau$ in~$\Ee(\Ll^\Ff,s)$, for some maximal proper
Levi subgroup~$\Ll$ of~$\Gg$, and a cuspidal representation~$\tilde\tau$
in~$\Ee(\tLl^\Ff,\tilde s)$ whose restriction to~$\Ll$
contains~$\tau$. We denote by~$w$ a representative for the non-trivial element
of~$\Nn_\Gg(\Ll)/\Ll$, which we assume normalizes~$\tau$.

We write~$\Ll^\Ff\simeq\GL_n^\Ff\times\Gg_0^\Ff$,
where~$\Gg_0$ is a (possibly trivial) special orthogonal group,
and~$\tau=\tau_1\otimes\tau_0$, so that~$\tau_1$ is a self-dual
irreducible cuspidal representation
of~$\GL_n^\Ff$. Then~$\tau_1\in\Ee(\GL_n^\Ff,s_1)$
and~$\tau_0\in\Ee(\Gg_0^\Ff,s_0)$, for some semisimple
elements~$s_0,s_1$ of the respective dual groups. Note that,
if~$\Gg_0$ is the trivial special orthogonal group then its dual group
is the trivial group so that~$s_0$ has no eigenvalues.

\begin{corollary}\label{cor:wnormrhotildeSO}
The representation~$\tilde\tau$ is normalized by~$w$ in all but the
following cases: 
\begin{enumerate}
\item $\tau_1$ is the trivial character of~$\GL_1^\Ff$ and~$1$ is not
an eigenvalue of~$s_0$;
\item $\tau_1$ is the quadratic character of~$\GL_1^\Ff$ and~$-1$ is
not an eigenvalue of~$s_0$.
\end{enumerate}
\end{corollary}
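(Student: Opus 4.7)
The plan is to use Lemma \ref{lem:wnormrhotilde} to reduce to $n=1$, and then to split according to the two possibilities for the self-dual character $\tau_1$ of $\GL_1^\Ff$, invoking Proposition \ref{prop:orthogonalextension} in one subcase and a Clifford-algebra computation in the other.

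Since Lemma \ref{lem:wnormrhotilde}(i) handles $n>1$ immediately, the focus is on $n=1$, where $\tau_1$ is either the trivial character $\boI$ or the non-trivial quadratic character of $\GL_1^\Ff$ and Lemma \ref{lem:wnormrhotilde}\ref{lem:wnormrhotilde.ii} reduces the statement to deciding when $\tilde\tau_0\simeq\tilde\tau_0^{c_0}\otimes\chi$, where $\chi=\tau_1\circ\mu$.

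If $\tau_1=\boI$, then $\chi$ is trivial and the condition reads $\tilde\tau_0\simeq\tilde\tau_0^{c_0}$, \ie that $\tilde\tau_0$ extends to $\GO_{2M}^{\pm,\Ff}$; by Proposition \ref{prop:orthogonalextension}\ref{prop:orth.ii} this fails precisely when $1$ is not an eigenvalue of $s_0$, giving case (i). If instead $\tau_1$ is the non-trivial quadratic character, then $\chi$ is the unique non-trivial character of $\tGg_0^\Ff$ factoring through the similitude map; under the identification $\tGg_0^*=\C^0(\Vv_0)$, it corresponds by duality to the order-two central element $-1\in\GG_m\subseteq\C^0(\Vv_0)$. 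Hence $\tilde\tau_0\otimes\chi\in\Ee(\tGg_0^\Ff,-\tilde s_0)$ and $\tilde\tau_0^{c_0}\in\Ee(\tGg_0^\Ff,\tilde c_0^{-1}\tilde s_0\tilde c_0)$ for any lift $\tilde c_0\in\C^1(\Vv_0)^\Ff$ of $c_0$. Since $\tGg_0$ has connected centre, each rational Lusztig series contains at most one irreducible cuspidal representation up to equivalence, so the isomorphism $\tilde\tau_0\simeq\tilde\tau_0^{c_0}\otimes\chi$ is equivalent to the $\tGg_0^{*,\Ff}$-conjugacy of $\tilde s_0$ with $\tilde c_0^{-1}(-\tilde s_0)\tilde c_0$, \ie to the existence of $h\in\C^1(\Vv_0)^\Ff$ satisfying $h\tilde s_0 h^{-1}=-\tilde s_0$.

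It remains to show that such an $h$ exists if and only if $-1$ is an eigenvalue of $s_0$; this is the main obstacle. For the forward direction, if $-1$ is an eigenvalue, the corresponding eigenspace is a non-degenerate $\FF_q$-quadratic subspace of $\Vv_0$, and so contains an $\FF_q$-rational anisotropic vector $\vv$; the Clifford identity $\tilde s_0\vv=s_0(\vv)\tilde s_0=-\vv\tilde s_0$, together with $\vv^{-1}=Q(\vv)^{-1}\vv$, gives $\vv\tilde s_0\vv^{-1}=-\tilde s_0$, and taking $h=\vv$ works. The converse mirrors the nilpotency argument in the proof of Proposition \ref{prop:orthogonalextension}\ref{prop:orth.ii}: assuming $-1$ is not an eigenvalue of $s_0$, the $(-1)$-eigenspace of the $s_0$-action on $\C(\Vv_0)$ is spanned by products $\vv_{i_1}\cdots\vv_{i_r}$ of $s_0$-eigenvectors with $r$ odd and eigenvalues multiplying to $-1$; the only anisotropic eigenvectors have eigenvalue in $\{\pm 1\}$, and since $+1$ alone contributes product $1\ne -1$, every such basis product contains at least one isotropic factor and hence squares to zero, and an argument analogous to that of \emph{loc.~cit.} shows that any $\FF_q$-linear combination of these nilpotent products is itself non-invertible in $\C(\Vv_0)$, so no such $h\in\C^1(\Vv_0)^\Ff$ exists.
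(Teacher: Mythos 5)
Your reduction to $n=1$ via Lemma~\ref{lem:wnormrhotilde} and your treatment of the trivial-character case via Proposition~\ref{prop:orthogonalextension}\ref{prop:orth.ii} are exactly the paper's. For the quadratic-character case you take a genuinely different route. The paper does not return to the dual group at this point: it runs a case analysis on which of $\pm 1$ are eigenvalues of $s_0$, using \emph{both} parts of Proposition~\ref{prop:orthogonalextension} together with elementary Clifford theory of the index-two inclusion $\Zz(\tGg_0^\Ff)\Gg_0^\Ff\subseteq\tGg_0^\Ff$ (the delicate case being when exactly one of $\pm1$ is an eigenvalue, where it first shows $\tilde\tau_0^{c_0}$ is either $\tilde\tau_0$ or $\tilde\tau_0\otimes\chi_1$ and then decides which using part~\ref{prop:orth.ii}). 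Your version instead converts $\tilde\tau_0\simeq\tilde\tau_0^{c_0}\otimes\chi$ into the existence of $h\in\C^1(\Vv_0)^\Ff$ with $h\tilde s_0h^{-1}=-\tilde s_0$ and settles that by a Clifford-algebra computation; this is more uniform (all four eigenvalue configurations are handled at once) and is closer in spirit to the proof of Proposition~\ref{prop:orthogonalextension} itself, at the cost of importing more dual-side machinery.

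Two steps are asserted rather than proved and need shoring up. First, the identification of $\chi=\tau_1\circ\mu$ with the central element $-1\in\GG_m\subseteq\Zz(\C^0(\Vv_0))$, so that $\tilde\tau_0\otimes\chi\in\Ee(\tGg_0^\Ff,-\tilde s_0)$: the general mechanism (linear characters of $\tGg_0^\Ff$ versus $\Ff$-fixed central elements of the dual group, compatibly with Lusztig series) is standard, see \cite[Prop.~13.30]{DM} or \cite{CE}, but since the centre of $\C^0(\Vv_0)$ is strictly larger than $\GG_m$ for an even-dimensional space, you must actually check that the cocharacter dual to $\mu$ is the scalar embedding, \ie that the order-two character $\tau_1\circ\mu$ corresponds to $-1$ and not to another order-two central element. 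Second, in the converse direction, ``contains an isotropic factor, hence squares to zero'' is not automatic: two factors whose eigenvalues are mutually inverse need not be orthogonal, so the square of the product is not simply $\pm\prod Q(\vv_i)$. What is immediate is that each such product is a zero divisor (multiply by its reversal, which gives $\prod Q(\vv_i)=0$), and the passage from ``each basis product is non-invertible'' to ``every linear combination is non-invertible'' requires the anticommutation and nilpotency claims to be checked for the specific spanning set at hand. The paper's own proof of Proposition~\ref{prop:orthogonalextension} glosses over the same point, so you are in no worse shape, but ``analogous to \emph{loc.\ cit.}'' is doing real work here and should be expanded.
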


We remark that, among the exceptional cases, we cannot have
that~$\Gg_0$ is the trivial group, since otherwise we would
have~$\Gg=\SO_2^+$, which we have excluded.

\begin{proof}
By Lemma~\ref{lem:wnormrhotilde}, we need only consider the
case~$n=1$, so that~$\tau_1$ is a trivial or quadratic character; 
then~$\tilde\tau$ is normalized by~$w$ if and only
if~$\tilde\tau_0\simeq\tilde\tau_0^{c_0}\otimes(\tau_1\circ\mu)$,
where we recall that~$\tilde\tau_0$ is an irreducible cuspidal
representation of the similitude
group~$\tGg_0^\Ff=\GSO_{2(N-1)}^{\pm,\Ff}$ containing~$\tau_0$,
and~$c_0\in\O_{2(N-1)}^{\pm,\Ff}\setminus\SO_{2(N-1)}^{\pm,\Ff}$.

Suppose first that~$\tau_1$ is trivial. Then~$\tilde\tau$ is
normalized by~$w$ if and only if~$\tilde\tau_0$ is normalized
by~$c_0$, which happens if and only if~$\tilde\tau_0$ extends to the
full similitude group; by Proposition~\ref{prop:orthogonalextension},
this happens if and only if~$1$ is an eigenvalue of~$s_0$, and we are
done. 

Now suppose that~$\tau_1$ is the non-trivial quadratic character and
put~$\chi_1=\tau_1\circ\mu$. We also denote by~$c_1$ an element
of~$\GSO_{2(N-1)}^{\pm,\Ff}$ which is not
in~$\Zz(\GSO_{2(N-1)}^{\pm,\Ff})\SO_{2(N-1)}^{\pm,\Ff}$; 
thus~$\tilde\tau_0$ is an extension of~$\tau_0$ if and only
if~$\tau_0$ is normalized by~$c_1$, if and only 
if~$\tilde\tau_0\simeq\tilde\tau_0\otimes\chi_1$.

If both~$\pm 1$ are eigenvalues of~$s_0$, 
then~$\tilde\tau_0\simeq\tilde\tau\otimes\chi_1$, 
while~$\tilde\tau_0$ extends to~$\GO_{2(N-1)}^{\pm,\Ff}$, by 
Proposition~\ref{prop:orthogonalextension}\ref{prop:orth.ii}. 
Thus~$\tilde\tau_0^{c_0}\otimes\chi_1\simeq\tilde\tau_0\otimes\chi_1
\simeq\tilde\tau_0$.

If neither~$\pm 1$ is an eigenvalue of~$s_0$, 
then~$\tilde\tau_0^{c_0}\otimes\chi_1$ is an extension 
of~$\tau_0^{c_0}$, which is not equivalent to~$\tau_0$ by 
Proposition~\ref{prop:orthogonalextension}\ref{prop:orth.i}. 
Thus~$\tilde\tau_0^{c_0}\otimes\chi_1\not\simeq\tilde\tau_0$.

Finally, if exactly one of~$\pm 1$ is an eigenvalue of~$s_0$,
then~$\tau_0^{c_0}\simeq\tau_0$, by
Proposition~\ref{prop:orthogonalextension}\ref{prop:orth.i}. 
Then~$\tilde\tau_0^{c_0}$ contains~$\tau_0$ on restriction, 
so is equivalent either
to~$\tilde\tau_0$ or to~$\tilde\tau_0\otimes\chi_1$, since it agrees
with~$\tilde\tau_0$ on the index two
subgroup~$\Zz(\GSO_{2(N-1)}^{\pm,\Ff})\SO_{2(N-1)}^{\pm,\Ff}$
of~$\GSO_{2(N-1)}^{\pm,\Ff}$. By
Proposition~\ref{prop:orthogonalextension}\ref{prop:orth.ii}, the
former happens if and only if~$1$ is an eigenvalue of~$s_0$, in which
case~$\tilde\tau_0^{c_0}\otimes\chi_1\simeq\tilde\tau_0\otimes\chi_1\not\simeq\tilde\tau_0$. Thus
the latter happens if and only if~$-1$ is an eigenvalue~$s_0$, in
which case~$\tilde\tau_0^{c_0}\otimes\chi_1\simeq\tilde\tau_0$. 
\end{proof}

\subsection{Summary}\label{SS.summary}

We summarize the results of all these calculations, including looking up
parameters in Lusztig's tables in~\cite{LCBMS}, in the following
table. We are given an irreducible cuspidal representation~$\tau$
in~$\Ee(\Ll^\Ff,s)$, for some maximal proper Levi subgroup~$\Ll$
of~$\Gg$, which is normalized by~$\Nn_\Gg(\Ll)$. We
write~$\Ll^\Ff\simeq\GL_n^\Ff\times\Gg_0^\Ff$, where~$\Gg_0$ is a
(possibly trivial) classical group of the same type as~$\Gg$,
and~$\tau=\tau_1\otimes\tau_0$, so that~$\tau_1$ is a self-dual
irreducible cuspidal representation
of~$\GL_n^\Ff$. Then~$\tau_1\in\Ee(\GL_n^\Ff,s_1)$
and~$\tau_0\in\Ee(\Gg_0^\Ff,s_0)$, for some semisimple
elements~$s_0,s_1$ of the respective dual groups. 

We write
\[
P_{s_0}(X)=\prod_P P(X)^{a_P}(X-1)^{a_+}(X+1)^{a_-}
\]
for the characteristic polynomial of~$s_0$, where the product is over
all irreducible self-dual monic polynomials over~$\FF_q$ of even
degree, and the integers~$a_P,a_{\pm}$ are related to
integers~$m_P,m_\pm$ as in the description in~\ref{eqn:notitle}. We
also write~$Q$ for the characteristic
polynomial of~$s_1\in\GL_n^{*,\Ff}$; thus either~$Q(X)=(X\pm 1)$ or~$Q$
is an irreducible self-dual monic polynomial of even
degree~$n=n_Q$. In the table, the cases (i)--(iii) refer to the
different possible classical groups, as in Section~\ref{SS.disconnected}.

\begin{center}
\begin{tabular}{|c|c|c|c|} 
\hline
degree~$n$ & polynomial~$Q$ & case & $f_\tau$ \\ \hline \hline
$1$ & $X-1$ & {\begin{tabular}{c} (i),(ii) \\ (iii) \end{tabular}}
& {\begin{tabular}{c} $2m_++1$ \\ $2m_+$ \end{tabular}} \\ \hline \hline
$1$ & $X+1$ & {\begin{tabular}{c} (i) \\ 
  (ii),(iii) \end{tabular}} & {\begin{tabular}{c} $2m_-+1$
  \\ $2m_-$ \end{tabular}}\\ \hline\hline
$n_Q$ even & \phantom{$\dfrac{2^2}{y_y}$}$Q$\phantom{$\dfrac{2^2}{y_y}$} & (i),(ii),(iii)& $(2m_Q+1)\dfrac{n_Q}2$ \\
\hline 
\end{tabular}
\end{center}

\subsection{Unitary groups}\label{SS.unitary}

Finally, we consider the case of unitary groups. We could have
included this in the cases of
Sections~\ref{SS.connected}--\ref{SS.summary} above but it would
have further complicated the
notation. Instead, we indicate here the differences with the previous
cases and summarize the final results.

Let~$q=q_\so^2$ be an even power of an odd prime~$p$, 
take~$\Gg=\GL_n$ over the finite field~$\FF_{q_\so}$, and 
let~$\Ff$ be the twisted Frobenius map, so
that~$\Gg^\Ff$ is a unitary group (which we can think of as a subgroup
of~$\GL_n(\FF_{q})$). Then~$\Gg^*=\GL_n$ act naturally on
an~$n$-dimensional vector space~$\Vv$ with
an~$\FF_q/\FF_{q_\so}$-hermitian form. For~$s\in\Gg^{*,\Ff}$
semisimple, we denote 
by~$P_s(X)\in\FF_{q}[X]$ its characteristic polynomial as an
automorphism of~$\Vv$. 

From~\cite[\S9]{L77}, the equivalence classes of irreducible cuspidal
representations of~$\Gg^\Ff$ are in bijection with the set of
conjugacy classes of semisimple elements~$s$ in~$\Gg^{*,\Ff}$ whose
characteristic polynomial is of the form
\[
P_s(X)=\prod_P P(X)^{a_P},
\]
where the product runs over all irreducible~$\FF_q/\FF_{q_\so}$-self-dual monic
polynomials in~$\FF_{q}[X]$ (see Section~\ref{SS.polynomials}),
and~$a_P=\tfrac 12(m_P^2+m_P^{})$, for some integer~$m_P^{}\ge 0$. 

Now suppose~$\Ll$ is a maximal proper~$\Ff$-stable Levi subgroup
of~$\Gg$ contained in an~$\Ff$-stable parabolic subgroup~$\Pp$. We
write~$\Ll^\Ff\simeq\GL_m(\FF_q)\times\Gg_0^\Ff$, with~$\Gg_0^\Ff$ again
a unitary group. 
Let~$\tau$ be an irreducible cuspidal representation of~$\Ll^\Ff$ with
the property that any representative~$w$ for the non-trivial element
of~$\Nn_{\Gg}(\Ll)/\Ll$ normalizes~$\tau$. Thus we may
decompose~$\tau=\tau_1\otimes\tau_0$, with~$\tau_1$ a
(conjugate)-self-dual irreducible cuspidal representation
of~$\GL_m(\FF_q)$ and~$\tau_0$ an irreducible cuspidal representation
of~$\Gg_0^\Ff$. 

In this situation, the induced representation~$\Ind_{\Ll,\Pp}^\Gg\tau$
decomposes again as~$\pi_1\oplus\pi_2$,
with~$\dim(\pi_1)>\dim(\pi_2)$,
and~$\End_{\Gg^\Ff}(\Ind_{\Ll,\Pp}^{\Gg}\tau)$ is a two-dimensional
algebra with a quadratic generator~$T$ satisfying a relation of the form
\[
(T+1)(T-q^{f_\tau})\ =\ 0,\qquad q^{f_\tau}=\frac{\dim(\pi_1)}{\dim(\pi_2)}>1.
\]
As in the connected case above, the parameter may be computed via the
Jordan decomposition of characters and Lusztig's tables, as
follows. For~$s_0$ a semisimple element of~$\Gg_0^{*,\Ff}$ such
that~$\tau_0\in\Ee(\Gg_0^\Ff,s_0)$, we write its characteristic
polynomial
\[
P_{s_0}(X)=\prod_P P(X)^{a_P},
\]
for integers~$a_P=\tfrac 12(m_P^2+m_P^{})$ as above. We also write~$Q$
for the irreducible characteristic polynomial of an
element~$s_1\in\GL_m^{*,\Ff}$ such that~$\tau_1\in\Ee(\GL_m^\Ff,s_1)$;
thus~$Q$ is an irreducible~$\FF_q/\FF_{q_\so}$-self-dual monic
polynomial, 
of some odd degree~$n=n_Q$. Then we get
\[
f_\tau = (2m_Q+1)\frac{n_Q}2.
\]

\section{Synthesis}
\label{S.synthesis}

In this section, we put together the previous results to verify the
equality~\eqref{eqn:expected}, for~$\pi$ a depth zero irreducible cuspidal
representation of the classical group~$\G$. Recall that~$N_\hG$ is the
dimension of the vector space on which the complex dual group~$\hG$
acts naturally. Strictly speaking, here we only prove the inequality
\[
\sum_{\rho\in\Aa^\s(\F)}\left\lfloor\(s_\pi(\rho)\)^2\right\rfloor n_\rho\ \ge\ N_{\hG},
\]
by checking that the sum over depth zero self-dual irreducible cuspidal representations
already gives us~$N_{\hG}$, that is:
\begin{equation}\label{eqn:expected0bis}
\sum_{\rho\in\Aa^\s_{[0]}(\F)}\left\lfloor\(s_\pi(\rho)\)^2\right\rfloor n_\rho\ =\ N_{\hG},
\end{equation} 
In many cases, the opposite inequality was already proved
by M\oe glin in~\cite{Mo2}; alternatively, the techniques used here,
together with the results in~\cite{MiS}, easily show that, for~$\rho$
a positive depth self-dual irreducible cuspidal representation we
have~$s_\pi(\rho)\in\left\{0,\pm\frac12\right\}$, so that these do not
contribute to the sum. (See~\cite{BHS}, where this is carried out in a
more general situation, for details.)

Thus we return to the notation of
Sections~\ref{S.notation}--\ref{S.reduction}: we
have~$\pi=\cind_{\J_\pi}^\G\l_\pi$ an irreducible cuspidal depth zero representation of a
classical group~$\G$, with~$\J_\pi$ the normalizer of a standard
maximal parahoric subgroup~$\J_{N_1,N_2}^\so$,
and~$\l_\pi|_{\J_{N_1,N_2}^\so}$ contains an irreducible
representation~$\l_\pi^\so$ inflated from an irreducible cuspidal
representation~$\tau_\pi\simeq\tau_\pi^{(1)}\otimes\tau_\pi^{(2)}$ of the
reductive quotient~$\Gg_{N_1,N_2}^\so \simeq
\Gg_{N_1}^{(1)}\times\Gg_{N_2}^{(2)}$. For~$i=1,2$, there is a unique
conjugacy class~$(s_i)$ in~$\Gg_{N_i}^{(i),*}$ such that~$\tau_\pi^{(i)}$ is
in the Lusztig series~$\Ee(\Gg_{N_i}^{(i)},s_i)$, and we denote the
characteristic polynomial of~$s_i$ by
\[
\prod_{P}P(X)^{a_P^{(i)}},
\]
where the product runs over irreducible~$k_\F/k_\so$-self-dual monic
polynomials in~$k_\F[X]$, and the powers~$a_P^{(i)}$
satisfy the conditions of~\ref{eqn:notitle}; in particular, there
are integers~$m_p^{(i)}\ge 0$ such that:
\begin{itemize}
\item if~$k_\F\ne k_\so$ or~$P(X)\ne (X\pm 1)$ then~$a_P^{(i)}=\frac 12 m_P^{(i)}(m_P^{(i)}+1)$;
\item if~$k_\F=k_\so$ and~$P(X)=(X\pm 1)$ then we
 write~$m_+^{(i)}=m_{(X-1)}^{(i)}$ and~$m_-^{(i)}=m_{(X+1)}^{(i)}$, 
 to match the notation of Section~\ref{S.computation},
 and these satisfy the conditions in~\ref{eqn:notitle}.
\end{itemize}

\begin{remark}
It may be that~$N_i=N_i^\an=0$, for~$i=1$ or~$2$; in this case the
group~$\Gg_{N_i}^{(i)}$ is trivial, but we must interpret it as the
``right'' trivial group. That is, if~$\G$ is symplectic then the group
is a trivial symplectic group; if~$\G$ is special orthogonal it is a trivial
special orthogonal group; if~$\G$ is unramified unitary it is a
trivial unitary group; and if~$\G$ is ramified unitary then it is a
trivial symplectic group if~$\e=(-1)^i$, and a trivial special
orthogonal group otherwise. In particular, if the group is trivial
symplectic then the characteristic polynomial of~$s_i$ is~$X-1$;
in the other cases, the characteristic polynomial of~$s_i$ is the
constant polynomial~$1$.
\end{remark}

Now, for~$\rho$ a self-dual irreducible cuspidal depth zero representation of
some~$\GL_{n}(\F)$, we have a unique self-dual irreducible cuspidal
representation~$\tau_\rho$ of~$\GL_n(k_\F)$ such that~$\rho$ contains
the representation~$\l_\rho$ of~$\GL_n(\o_\F)$ obtained from~$\tau_\rho$
by inflation. Then~$\tau_\rho$ is in the Lusztig series associated to
some conjugacy class in~$\GL_n(k_\F)$ with irreducible self-dual
characteristic polynomial~$Q=Q_\rho$ of degree~$n$.

We suppose first that~$k_\F\ne k_\so$ or~$Q(X)\ne(X\pm 1)$; thus
either~$n>1$ or~$\G$ is an unramified unitary group, and the
parameters~$q^{f_i}$ of the Hecke algebra are always computed from the
Hecke algebra in the finite group. Then the formulae in
Sections~\ref{SS.summary}--\ref{SS.unitary} give
\[
f_i = f_{\tau_\rho\otimes\tau_\pi^{(i)}} = (2m_Q^{(i)}+1)\frac{n}{2}
\]
and, from~\eqref{eqn:redpoints} we get reducibility points
\[
\left\{\pm s_\pi(\rho),\pm s_\pi(\rho')\right\} = \left\{\pm \frac{(m_Q^{(1)}+m_Q^{(2)}+1)}{2},\pm\frac{(m_Q^{(1)}-m_Q^{(2)})}{2}\right\}.
\]
Since one of these is an integer and the other a half-integer, we get
\[
\lfloor s_\pi(\rho)^2\rfloor + \lfloor s_\pi(\rho')^2\rfloor
=
\(\frac{(m_Q^{(1)}+m_Q^{(2)}+1)}{2}\)^2 + \(\frac{(m_Q^{(1)}-m_Q^{(2)})}{2}\)^2 -
\frac 14 = a_Q^{(1)} + a_Q^{(2)}.
\]
Thus we are already done in the case of unramified unitary groups:
summing, we get
\[
\sum_{\rho\in\Aa_{[0]}^\s(\F)}\lfloor s_\pi(\rho)^2\rfloor n_\rho =
\sum_{P}(a_P^{(1)}+a_P^{(2)})\deg(P) = (2N_1+N_1^\an)+(2N_2+N_2^\an) = 2N+N^\an,
\]
as required.

For the cases~$k_\F=k_\so$ and~$Q(X)=(X\pm 1)$, we will split
according to the type of group~$\G$, since the values for the
parameters do not admit such a uniform description. 

\subsection{Symplectic groups}
We suppose first that~$Q(X)=X-1$, so that~$\rho,\rho'$ are the trivial
character and the unramified character of order~$2$, and
write~$m_Q^{(i)}=m_+^{(i)}$. Since both~$\Gg_{N_i}^{(i)}$ are
symplectic groups, we get
\[
f_i = (2m_+^{(i)}+1),
\]
with reducibility points
\[
\left\{\pm s_\pi(\rho),\pm s_\pi(\rho')\right\} = \left\{\pm
 (m_+^{(1)}+m_+^{(2)}+1),\pm (m_+^{(1)}-m_+^{(2)})\right\}.
\]
Thus
\[
\lfloor s_\pi(\rho)^2\rfloor + \lfloor s_\pi(\rho')^2\rfloor
=
(m_+^{(1)}+m_+^{(2)}+1)^2 + (m_+^{(1)}-m_+^{(2)})^2 = a_+^{(1)} + a_+^{(2)} - 1.
\]

Now suppose that~$Q(X)=X+1$, so that~$\rho,\rho'$ are (tamely) ramified
characters of order~$2$, and write~$m_Q^{(i)}=m_-^{(i)}$. Then we get
\[
f_i = 2m_-^{(i)},
\]
with reducibility points
\[
\left\{\pm s_\pi(\rho),\pm s_\pi(\rho')\right\} = \left\{\pm
 (m_-^{(1)}\pm m_-^{(2)})\right\}.
\]
Thus
\[
\lfloor s_\pi(\rho)^2\rfloor + \lfloor s_\pi(\rho')^2\rfloor
=
(m_-^{(1)}+m_-^{(2)})^2 + (m_-^{(1)}-m_-^{(2)})^2 = a_-^{(1)} + a_-^{(2)}.
\]

Finally, summing we get
\[
\sum_{\rho\in\Aa^\s_{[0]}(\F)}\lfloor s_\pi(\rho)^2\rfloor n_\rho =
\sum_{P}(a_P^{(1)}+a_P^{(2)})\deg(P) - 1 = (2N_1+1)+(2N_2+1) -1 = 2N+1.
\]

\subsection{Ramified unitary groups}
In this case, the groups~$\Gg_{N_i}^{(i)}$ are one symplectic and one
orthogonal; for ease of exposition, we will assume that~$\Gg_{N_1}^{(1)}$
is symplectic (otherwise exchange~$1$ and~$2$).

We begin again with the case~$Q(X)=X-1$ and write~$m_+^{(i)}$ in place
of~$m_Q^{(i)}$. Thus we get
\[
f_1 = 2m_+^{(1)}+1,\ f_2=
\begin{cases} 2m_+^{(2)}+1,&\hbox{ if }N_2^\an=1, \\
2m_+^{(2)},&\hbox{ otherwise.}\end{cases}
\]
Thus we get reducibility points
\[
\left\{\pm s_\pi(\rho),\pm s_\pi(\rho')\right\} = 
\begin{cases}
\left\{\pm (m_+^{(1)}+ m_+^{(2)}+1), \pm (m_+^{(1)}-m_+^{(2)})\right\}, &\hbox{ if
}N_2^\an=1,\\
\left\{\pm \(m_+^{(1)}+m_+^{(2)}+\tfrac 12\), \pm \(m_+^{(1)}-m_+^{(2)}+\tfrac
 12\)\right\}, &\hbox{ otherwise.}
\end{cases}
\]
Thus, if~$N_2^\an=1$, we get
\[
\lfloor s_\pi(\rho)^2\rfloor + \lfloor s_\pi(\rho')^2\rfloor
=
(m_+^{(1)}+m_+^{(2)}+1)^2 + (m_+^{(1)}-m_+^{(2)})^2 = a_+^{(1)} + a_+^{(2)};
\]
and otherwise, since both reducibility points are half-integers, we get
\[
\lfloor s_\pi(\rho)^2\rfloor + \lfloor s_\pi(\rho')^2\rfloor
=
\(m_+^{(1)}+m_+^{(2)}+\tfrac 12\)^2 + \(m_+^{(1)}-m_+^{(2)}+\tfrac 12\)^2 -\tfrac 12 = a_+^{(1)} + a_+^{(2)}-1.
\]

The case~$Q(X)=X+1$ is similar, the main difference being
that~$f_1=2m_-^{(1)}$. Then we get reducibility points
\[
\left\{\pm s_\pi(\rho),\pm s_\pi(\rho')\right\} = 
\begin{cases}
\left\{\pm (m_-^{(1)}+ m_-^{(2)}+\tfrac 12), \pm (m_-^{(1)}-m_-^{(2)}-\tfrac 12)\right\}, &\hbox{ if
}N_2^\an=1,\\
\left\{\pm \(m_-^{(1)}\pm m_-^{(2)}\)\right\}, &\hbox{ otherwise.}
\end{cases}
\]
Now in both cases we get
\[
\lfloor s_\pi(\rho)^2\rfloor + \lfloor s_\pi(\rho')^2\rfloor
= a_-^{(1)} + a_-^{(2)}.
\]
Noting that we have
\[
\sum_{P} a_P^{(1)}\deg(P) = 2N_1+1,\quad
\sum_{P} a_P^{(2)}\deg(P) = \begin{cases}
2N_2,&\hbox{ if }N_2^\an=1,\\
2N_2+N_2^\an,&\hbox{ otherwise,}
\end{cases}
\]
we once again see that, summing over all depth zero self-dual irreducible cuspidal
representations of all~$\GL_n(\F)$, equation~\ref{eqn:expected0bis} is satisfied.

\subsection{Special orthogonal groups}
The case of special orthogonal groups is exactly analogous and we do not give
the details. One can check the equality in~\eqref{eqn:expected0bis} 
by working through the
cases according to the parities of~$N_1^\an,N_2^\an$. For example, if
both are odd, then with~$Q(X)=X\pm1$ we get
\[
\lfloor s_\pi(\rho)^2\rfloor + \lfloor s_\pi(\rho')^2\rfloor = a_\pm^{(1)} + a_\pm^{(2)}+1.
\]
The additions of the extra~$1$ here exactly compensate for the fact
that the dual groups of~$\Gg_{N_i}^{(i)}$ have dimension~$2N_i=2N_i+N_i^\an-1$.

\subsection{Summary}
In all cases, we have now checked that the
equality~\eqref{eqn:expected0bis} holds. We have also seen
that~$s_\pi(\rho)\in\frac 12\ZZ$ in all cases.

\section{$\L$-packets and Examples}
\label{S.examples}

In this final section, we examine the implications of the results here
for the computation of~$\L$-packets and give some examples. Firstly, we 
recall some facts about the (expected) sizes of discrete series~$\L$-packets
containing an irreducible cuspidal representation, and the (expected) number 
of cuspidal representations in them. 

Let~$\vphi:\Ww_\F\times\SL_2(\CC)\to\hG\rtimes\Ww_\F$ be a Langlands parameter for~$\G$ 
whose~$\L$-packet~$\Pi_\vphi$ contains an irreducible cuspidal representation~$\pi$ of~$\G$. 
Then, as recalled in the introduction, we should have
\[
\vphi\ =\ \bigoplus_{(\rho,m)\in\Jord(\pi)} \vphi_\rho\otimes\st_m,
\]
where~$\vphi_\rho$ is the (irreducible) representation of the Weil
group~$\Ww_\F$ corresponding to~$\rho$ via the Langlands
correspondence for general linear groups, and~$\st_m$ is
the~$m$-dimensional irreducible representation of~$\SL_2(\CC)$. 
Putting~$\ell(\pi)=\#\Jord(\pi)$, the number of representations one expects in the 
packet~$\Pi_\vphi$ is~$2^{\ell(\pi)-1}$, since this is the number of characters 
of the component group of~$\Cent_{\hG}(\Im(\vphi))$ trivial on the centre 
of~$\hG$. We also set
\[
\E(\pi)\ =\ \left\{\rho\in\Aa^\s(\F)\mid s_\pi(\rho)\in\NN\right\}
\]
and~$e(\pi)=\#\E(\pi)$; this is the number of~$\rho\in\Aa^\s(\F)$ such 
that~$(\rho,m)\in\Jord(\pi)$ for some \emph{odd} integer~$m$. Finally, put
\[
e_0(\pi)\ =\ \begin{cases} 1&\text{ if there is }\rho\in\Aa^\s(\F)
\text{ such that } s_\pi(\rho)\in 1+2\NN,\\
0&\text{ otherwise.}
\end{cases}
\]
Then the number of irreducible cuspidal representations one expects in the~$\L$-packet~$\Pi_\vphi$
is~$2^{e(\pi)-e_0(\pi)}$, since this is the number of characters
of the component group of~$\Cent_{\hG}(\Im(\vphi))$ which are trivial on 
the centre of~$\hG$ and \emph{alternating}, in the sense of, for 
example,~\cite[Section~8]{Mo3}.

\begin{remark}
The reference to the work of M\oe glin in~\cite[Section~8]{Mo3} is in fact only for 
unitary groups, though it also holds when~$G$ is a quasi-split unitary, orthogonal, 
symplectic or~$\mathrm{GSpin}$ group (see~\cite{Mo4}) and it seems reasonable to expect 
it to hold in more generality. Although our results here on the representations with 
given inertial reducibility set do not require it, for the purposes of 
discussion, from now on we make the following assumption:
\begin{itemize}
\item[(A)] The description of the number of irreducible cuspidal representation in an~$\L$-packet above is valid for the group~$\G$.
\end{itemize}
However, extra care must be taken when~$\G$ is an even-dimensional special orthogonal 
group -- see Example~\ref{ex:ortho}.
\end{remark}

\medskip

Now we describe how our results allow us to find all irreducible cuspidal representations with 
the same inertial reducibility set, hence all irreducible cuspidal representations in a union 
of one, two or four~$\L$-packets (assuming (A)). We suppose we are
in the general situation of the previous section, with~$\pi$ an
irreducible cuspidal depth zero representation of~$\G$. Recall that
\[
\IRed(\pi)\ =\ 
\left\{\!\left\{([\rho],m)\mid \rho\in\Aa^\s(\F),
~m\in\NN\text{ with }2s_\pi(\rho)=m+1\right\}\!\right\},
\]
where~$[\rho]$ denotes the inertial equivalence class of~$\rho$.

Our representation~$\pi$ is induced from a
representation containing the inflation of an irreducible cuspidal
representation~$\tau_\pi\simeq\tau_\pi^{(1)}\otimes\tau_\pi^{(2)}$ of the
reductive quotient~$\Gg_{N_1}^{(1)}\times\Gg_{N_2}^{(2)}$ of a maximal
parahoric subgroup. For~$i=1,2$ and~$P$ an irreducible~$k_\F/k_\so$-self-dual
monic polynomial in~$k_\F[X]$, we denote by~$m_P^{(i)}$ the
associated non-negative integer as in~\ref{eqn:notitle}.

The formulae obtained above show that, for
each irreducible~$k_\F/k_\so$-self-dual monic polynomial~$P$ in~$k_\F[X]$,
the pair of integers~$\{m_P^{(1)},m_P^{(2)}\}$ can be recovered from the
reducibility points~$\{s_\pi(\rho),s_\pi(\rho')\}$, for~$\rho=\rho_P$ a
representation in~$\Aa^\s_{[0]}(\F)$ with associated characteristic
polynomial~$P$, and~$\rho'$ its self-dual unramified twist. Indeed,
one gets the following, where we write~$\lvert \,\cdot\,\rvert_\infty$
for the usual (archimedean) absolute value on~$\RR$:
\begin{itemize}
\item if~$P(X)\ne X\pm 1$ or~$k_\F\ne k_\so$ then
\[
\left\{m_P^{(1)},m_P^{(2)}\right\}\ =\ 
\left\{\left\lfloor \lvert s_\pi(\rho)\pm s_\pi(\rho') \rvert_\infty
 \right\rfloor \right\};
\]
\item if~$P(X)= X\pm 1$ and~$k_\F=k_\so$, so that~$\rho$ is a character of~$\GL_1(\F)$
of order at most~$2$, then
\[
\left\{m_P^{(1)},m_P^{(2)}\right\}\ =\ 
\left\{\left\lfloor \frac{\lvert s_\pi(\rho)\pm s_\pi(\rho') \rvert_\infty}2
 \right\rfloor \right\}.
\]
\end{itemize}
Thus one obtains the same inertial reducibility set as for~$\pi$
only for irreducible cuspidal representations~$\pi'$
with~$\{m_P^{(1)}(\pi),m_P^{(2)}(\pi)\}=\{m_P^{(1)}(\pi'),m_P^{(2)}(\pi')\}$,
for every irreducible~$k_\F/k_\so$-self-dual monic polynomial~$P$
in~$k_\F[X]$. Hence, in order to obtain other representations with the
same inertial reducibility set, it is enough to exchange (some of) the
integers~$m_P^{(1)}(\pi),m_P^{(2)}(\pi)$. Note, however, that it is
not always possible to do this, for parity reasons. We set
\[
\Q(\pi)\ =\ \left\{\text{irreducible self-dual monic }P\in k_\F[X]\mid
m_P^{(1)}(\pi)\ne m_P^{(2)}(\pi)\right\}
\]
and put~$q(\pi):=\#\Q(\pi)$. 

\begin{remark}\label{rmk:Q=E}
If~$P\in k_\F[X]$ is an irreducible~$k_\F/k_\so$-self-dual monic polynomial, 
with~$P(X)\ne X\pm 1$ or~$k_\F\ne k_\so$, and~$\rho_P,\rho'_P$ are the 
corresponding self-dual irreducible cuspidal representations of some~$\GL_n(\F)$, then 
one of~$s_\pi(\rho_P),s_\pi(\rho'_P)$ is integral and the other
non-integral. In particular, we see that (exactly) one of~$\rho_P,\rho'_P$ is 
in~$\E(\pi)$ if and only if~$P\in\Q(\pi)$.

A similar analysis can be done for~$P(X)=X\pm 1$ and~$k_\F= k_\so$, but it depends 
on the type of group. We summarize the results in the separate cases below.
\end{remark}

We will parametrize the irreducible cuspidals~$\pi'$ with~$\IRed(\pi')=\IRed(\pi)$ by 
maps~$\bee:\Q(\pi)\to\{1,2\}$, noting that not all such maps are permissible, 
and that each map may give rise to more than one representation.
We split again according to the type of group.

\subsection{Symplectic groups}
We begin with the case that~$\G=\Sp_{2N}(\F)$ is a symplectic group,
in which case there are no restrictions on the maps~$\bee:\Q(\pi)\to\{1,2\}$. 
We put
\[
\d(\pi)=\#\left\{i\mid m_{X+1}^{(i)}(\pi)\ne 0\right\}.
\]
Now suppose we have~$\bee:\Q(\pi)\to\{1,2\}$ and, for
irreducible self-dual monic~$P\not\in\Q(\pi)$, we
set~$\bee(P)=1$. Then we can find, for~$i=1,2$, a semisimple
element~$s_{\bee}^{(i)}$ in a suitable odd special orthogonal
group~$\SO_{2N'_i+1}(k_\F)$ with characteristic polynomial
\[
\prod_P P(X)^{a_P^{(i)}(\bee)},
\]
where the product is taken over all irreducible self-dual monic
polynomials in~$k_\F[X]$ and the integers~$a_P^{(i)}(\bee)$ are
related to integers~$m_P^{(i)}(\bee)$ as in~\ref{eqn:notitle}, with
\[
m_P^{(i)}(\bee)\ =\ m_P^{(i\cdot\bee(P))},
\]
where the index is understood modulo~$3$. Correspondingly, we have
irreducible cuspidal
representations~$\tau_\bee=\tau_\bee^{(1)}\otimes\tau_\bee^{(2)}$
of~$\Gg_{N_1',N_2'}=\Sp_{2N'_1}(k_\F)\times\Sp_{2N'_2}(k_\F)$; note that, for
each~$\bee$, the number of such
representations is~$2^{\d(\pi)}$. Inflating each~$\tau_\bee$ to the
maximal parahoric subgroup~$\J_{N_1',N_2'}$ and inducing to~$\G$, we
get an irreducible cuspidal representation. Thus we
get~$2^{q(\pi)+\d(\pi)}$ irreducible cuspidal representations of~$\G$.

The analysis of Remark~\ref{rmk:Q=E}, along with that for the cases~$P(X)=X\pm
1$, shows that
\[
e(\pi)\ =\ \begin{cases} 
q(\pi)+\d(\pi)+1 &\text{ if }\d(\pi)\le 1; \\
q(\pi)+\d(\pi) &\text{ if }\d(\pi)=2.
\end{cases}
\]
On the other hand, we always have~$e_0(\pi)=1$, since either~$s_\pi(\boI)$ 
or~$s_\pi(\om_0)$ is an odd integer, where~$\boI$ is the trivial character 
of~$\GL_1(\F)$ and~$\om_0$ is the unramified character of order two.
Hence we have constructed the irreducible cuspidal
representations in a union of two~$\L$-packets if~$\d(\pi)=2$, or in a
single~$\L$-packet otherwise.

Thus, in some cases we are able to identify all the representations
in a single~$\L$-packet, but in others we cannot distinguish between
the representations in two~$\L$-packets without further work.
We give some examples to illustrate these phenomena.
In the following, we write~$\om_1,\om_2=\om_0\om_1$ for the
(tamely) ramified characters of~$\GL_1(\F)$ of order two.

\begin{example}\label{ex:yes}
We begin with an example where
we are able to recover all the cuspidal representations in a
single~$\L$-packet. We take~$\G=\Sp_6(\F)$ and begin with the parahoric
subgroup~$\J_{2,1}$, which has reductive
quotient~$\Gg_{2,1}\simeq\Sp_4(k_\F)\times\SL_2(k_\F)$.

We take the representation~$\theta_{10}$ of~$\Sp_4(k_\F)$, that is the
unique cuspidal representation in the Lusztig
series~$\Ee(\Sp_4(k_\F),1)$ (so that the associated characteristic
polynomial is~$(X-1)^5$). We also take an irreducible cuspidal
representation~$\tau$ of~$\SL_2(k_\F)$ in a Lusztig series with
associated characteristic polynomial~$(X-1)(X+1)^2$. Thus~$\tau$ is a
representation of dimension~$\frac{q-1}2$, of which there are two. We denote
by~$\l_\pi$ the representation of~$\J_{2,1}$ inflated
from~$\theta_{10}\otimes\tau$ and
put~$\pi=\cind_{\J_{2,1}}^\G\l_\pi$, an irreducible cuspidal
representation of~$\G$. 

Following the recipe in Section~\ref{S.synthesis}, we find
that~$s_\rho(\pi)\in\left\{0,\pm\frac 12\right\}$ unless~$\rho$ is a
character of~$\GL_1(\F)$. On the other hand, we get
\[
m_{X-1}^{(1)}=1,\ m_{X-1}^{(2)}=0,\qquad m_{X+1}^{(1)}=0,\ m_{X+1}^{(2)}=1,
\]
and hence
\[
\{s_\pi(\boI),s_\pi(\om_0)\}=\{2,1\},\qquad 
\{s_\pi(\om_1),s_\pi(\om_2)\}=\{1\};
\]
thus~$\IRed(\pi)$ is the
multiset~$\{\!\{([\boI],2),([\boI],1),([\om_1],1),([\om_1],1)\}\!\}$. In this case
we know more since the Langlands parameter~$\varphi_\pi$ corresponding
to~$\pi$ has image in~$\SO_7(\CC)$ so, in particular, has
determinant~$1$; thus it must be
\[
\varphi_\pi=\boI\otimes(\st_3\oplus\st_1)\oplus\om_0\oplus\om_1\oplus\om_2,
\]
since exchanging~$\boI$ and~$\om_0$ would give a representation with
determinant~$\om_0$.

In the notation above, we have~$\E(\pi)=\{\boI,\om_0,\om_1,\om_2\}$
so that~$e(\pi)=4$. Thus the~$\L$-packet containing~$\pi$ consists
of~$16$ irreducible representations,~$8$ of which are cuspidal. On the other hand,
we have~$\Q(\pi)=\{X+1,X-1\}$ so that~$q(\pi)=2$, and~$\d(\pi)=1$, so
that we can construct exactly the cuspidal representations in
the~$\L$-packet, as follows:
\begin{enumerate}
\item There is one rational conjugacy class~$(s)$
in~$\SO_3(k_\F)$ such that its characteristic polynomial
is~$(X-1)(X+1)^2$ \emph{and} the corresponding Lusztig
series~$\Ee(\SL_2(k_\F),s)$ contains two 
irreducible cuspidal representations~$\tau,\tau'$.

Now we can inflate the representations~$\theta_{10}\otimes\tau$
and~$\theta_{10}\otimes\tau'$ of~$\Sp_4(k_\F)\times\SL_2(k_\F)$ to
either~$\J_{2,1}$ or~$\J_{1,2}$ and then induce to~$\G$. This gives us
four inequivalent irreducible cuspidal representations of~$\G$ (one of
which is~$\pi$).
\item There is one rational conjugacy class~$(s_1)$
in~$\SO_7(k_\F)$ such that its characteristic polynomial
is~$(X-1)^5(X+1)^2$ \emph{and} the corresponding Lusztig
series~$\Ee(\Sp_6(k_\F),s_1)$ contains two
irreducible cuspidal representation~$\tau_1,\tau_1'$.

We inflate these representations to either~$\J_{3,0}$ or~$\J_{0,3}$
and induce to~$\G$, giving us another four inequivalent irreducible cuspidal
representations of~$\G$, also inequivalent to those in (i).
\end{enumerate}
\end{example}

\begin{example}\label{ex:no}
Now we look at the simplest example where the information we have so
far is only sufficient to recover the cuspidal representations in
a union of two~$\L$-packets. We take~$\G=\Sp_4(\F)$ and begin with the parahoric
subgroup~$\J_{1,1}$, which has reductive
quotient~$\Gg_{1,1}\simeq\SL_2(k_\F)\times\SL_2(k_\F)$.

We take irreducible cuspidal representations~$\tau_1,\tau_2$ of~$\SL_2(k_\F)$ each
in a Lusztig series with associated characteristic
polynomial~$(X-1)(X+1)^2$, as in the previous example. We denote
by~$\l_\pi$ the representation of~$\J_{1,1}$ inflated
from~$\tau_1\otimes\tau_2$ and put~$\pi=\cind_{\J_{1,1}}^\G\l_\pi$,
an irreducible cuspidal representation of~$\G$. Following the recipe,
this time we obtain
\[
\IRed(\pi)=\{\!\{([\boI],1),([\om_1],2)\}\!\}
\]
and, using the fact that the corresponding Langlands
parameter~$\varphi_\pi$ has determinant~$1$, we have
\begin{equation}\label{eqn:whichomega}
\varphi_\pi=\boI\oplus\om\otimes(\st_3\oplus\st_1),
\end{equation}
where~$\om$ is either~$\om_1$ or~$\om_2$. However, without further
work, we cannot distinguish which ramified quadratic character occurs
here. This reflects the fact that~$m_{X+1}^{(1)}=m_{X+1}^{(2)}=1$ so
that~$\d(\pi)=2$. 

Thus, at this stage, we can only identify the~$4$ cuspidal
representations occurring in the union of the two~$\L$-packets
corresponding to~$\om=\om_1,\om_2$ in~\eqref{eqn:whichomega}: they are
given by independently choosing the~$\tau_i$ to be one of the two
irreducible cuspidal representations of~$\SL_2(k_\F)$ of dimension~$\frac{q-1}2$.

Distinguishing these two~$\L$-packets (and identifying the two
discrete series representation in each of them) requires further
analysis: for this particular example, this is carried out in~\cite{BHS2}.
\end{example}

In general, distinguishing the representations when we have two~$\L$-packets
as in Example~\ref{ex:no} will probably require, as a first step, the
classification of quadratic-unipotent irreducible cuspidal representations of
finite classical groups, and the compatibility of this classification with
Deligne--Lusztig induction, which is done by Waldspurger in~\cite{Wald}.

\subsection{Unramified unitary groups}
Suppose now that~$\G$ is an unramified unitary group of
dimension~$2N+N^\an$. On the one hand
this case is simpler, and we will see that the set of representations
with given inertial reducibility (multi)set is a
single~$\L$-packet. On the other hand, we cannot arbitrarily exchange
the integers~$m_P^{(1)},m_P^{(2)}$ as above, due to parity constraints
-- swapping would sometimes lead to representations of the isometry
group of a non-isometric hermitian space.

Recall that~$\pi$ is induced from the inflation of an irreducible cuspidal
representation~$\tau_\pi^{(1)}\otimes\tau_\pi^{(2)}$ 
of~$\Gg_{N_1}^{(1)}\times\Gg_{N_2}^{(2)}$,
where~$\Gg_{N_i}^{(i)}=\U(\ov\V_{(i)})$, with~$\ov\V_{(i)}$ a
hermitian space of dimension~$2N_i+N_i^\an$. Moreover,
if~$\Gg_{N'_1}^{(1)}\times\Gg_{N'_2}^{(2)}$ is the reductive quotient
of another maximal parahoric subgroup of~$\G$ then, for~$i=1,2$, 
the corresponding space~$\ov\V'_{(i)}$ must have dimension of the
same parity as that of~$\ov\V_{(i)}$.

Recall also that we have a semisimple element~$s_i$ of the dual group of~$\Gg_{N_i}^{(i)}$, such
that~$\tau_\pi^{(i)}$ is the (unique) irreducible cuspidal representation in the
corresponding Lusztig series, and that~$s_i$ has characteristic
polynomial
\[
\prod_P P(X)^{a_P^{(i)}},
\]
where the product runs over all irreducible~$k_\F/k_\so$-self-dual
monic polynomials in~$k_\F[X]$, and~$a_P^{(i)}=\frac 12
m_P^{(i)}(m_P^{(i)}+1)$. Since the degree of each such polynomial~$P$ is
odd, we have
\[
\deg\(P(X)^{a_P^{(i)}}\)\text{ is }\begin{cases}
\text{odd }&\text{ if }m_P^{(i)}\equiv 1,2\pmod 4,\\
\text{even }&\text{ if }m_P^{(i)}\equiv 0,3\pmod 4.
\end{cases}
\]
Thus, if one~$m_P^{(i)}$ is~$1,2\pmod 4$ and the other is~$0,3\pmod
4$, then~$m_P^{(1)}$ cannot be exchanged with~$m_P^{(2)}$
independently of other changes. This is exactly reflected in the
(expected) size of the~$\L$-packet as follows.

We put
\[
\Q_0(\pi)\ =\ \left\{P\in\Q(\pi) \left\vert\ 
\left\lceil{m_P^{(1)}}/2\right\rceil \nequiv
\left\lceil{m_P^{(2)}}/2\right\rceil \pmod{2}
\right.\right\}.
\]
We saw in Remark~\ref{rmk:Q=E} that, in this case, we have~$q(\pi)=e(\pi)$. 
Moreover, the formula for the reducibility points shows
that~$P\in\Q_0(\pi)$ if and only if one
of~$s_\pi(\rho_P),s_\pi(\rho'_P)$ is an odd integer; thus~$\Q_0(\pi)$ is empty if
and only if~$e_0(\pi)=0$. 

Now suppose we are given a map~$\bee:\Q(\pi)\to\{1,2\}$ \emph{such
that~$\#\{P\in\Q_0(\pi)\mid\bee(P)=2\}$ is even}; for
irreducible~$k_\F/k_\so$-self-dual monic~$P\not\in\Q(\pi)$, we
set~$\bee(P)=1$. Then we can find, for~$i=1,2$, a semisimple
element~$s_{\bee}^{(i)}$ in a suitable unitary
group~$\U(2N'_i+N_i^\an,k_\F)$ with characteristic polynomial
\[
\prod_P P(X)^{a_P^{(i)}(\bee)},
\]
where~$a_P^{(i)}(\bee)=\frac 12 m_P^{(i)}(\bee)(m_P^{(i)}(\bee)+1)$ and
\[
m_P^{(i)}(\bee)\ =\ m_P^{(i\cdot\bee(P))},
\]
with the index understood modulo~$3$. Correspondingly, we have
a unique irreducible cuspidal representation~$\tau_\bee=\tau_\bee^{(1)}\otimes\tau_\bee^{(2)}$
of~$\Gg_{N_1',N_2'}$ and, by inflation and compact induction, a unique
irreducible cuspidal representation of~$\G$. 

In this way, we construct~$2^{q(\pi)-e_0(\pi)}$ inequivalent irreducible cuspidal
representations of~$\G$ with the same inertial reducibility set
as~$\pi$, which is exactly the number of cuspidal representations in
the~$\L$-packet of~$\pi$.

\subsection{Special orthogonal and ramified unitary groups}
A similar analysis can be made in the cases of special orthogonal and
unitary groups~$\G$. The constraints for the maps~$\bee$ are like
those in the unramified case, since the anisotropic dimensions of the
groups~$\Gg_{N_i}^{(i)}$ are determined by the group~$\G$, as is the 
sum of the dimensions of the spaces on which~$\Gg_{N_i}^{(i)}$ act. Since the 
details are rather similar to the cases above, we only sketch them.

We are given an irreducible cuspidal representation~$\pi$ of~$\G$. For~$P$ a 
self-dual monic polynomial in~$k_\F[X]$, as in previous cases, we get 
integers~$m_P^{(i)}$, for~$i=1,2$. We modify slightly the definition of~$\Q(\pi)$, 
replacing it with
\[
\begin{cases}
\Q(\pi)\setminus\{X-1\}&\text{ if~$\G$ is even-dimensional ramified unitary;}\\
\Q(\pi)\setminus\{X+1\}&\text{ if~$\G$ is odd-dimensional ramified unitary;}\\
\Q(\pi)\setminus\{X-1,X+1\}&\text{ if~$\G$ is odd-dimensional orthogonal.}
\end{cases}
\]
We also put
\[
\Q'(\pi)=
\begin{cases}
\Q(\pi)&\text{ if~$\G$ is even-dimensional orthogonal;}\\
\Q(\pi)\setminus\{X-1,X+1\}&\text{ otherwise.}
\end{cases}
\]
For~$P$ a self-dual monic polynomial in~$k_\F[X]$, we set
\[
f_P=\begin{cases} 1&\hbox{ if }\deg(P)=1,\\
2&\hbox{ otherwise,}
\end{cases}
\]
and then define
\[
\Q_0(\pi)\ =\ \left\{P\in\Q'(\pi) \left\vert\ 
\left\lceil{m_P^{(1)}}/f_P\right\rceil \nequiv
\left\lceil{m_P^{(2)}}/f_P\right\rceil \pmod{2}
\right.\right\}.
\]
Then we again constrain our map~$\bee:\Q(\pi)\to\{1,2\}$ such
that~$\#\{P\in\Q_0(\pi)\mid\bee(P)=2\}$ is even.
For each such~$\bee$ we can construct a finite set of irreducible cuspidal 
representations of~$\G$. The total number of cuspidal representations obtained 
in this way is one, two or four times the expected number of cuspidal 
representations in the packet, \emph{or half this number}; the latter can occur 
only in the case of even orthogonal groups.

We illustrate this, in particular the last case, with examples, using the same
notation for the quadratic characters of~$\GL_1(\F)$ as in
Examples~\ref{ex:yes} and~\ref{ex:no}.

\begin{example}\label{ex:ortho}
Let~$\G=\SO(\V)$ be the (split) special orthogonal group of an~$8$-dimensional 
orthogonal space~$\V$ with Witt index~$4$. Denote by~$\J_{4,0},\J_{0,4}$ the maximal
compact subgroups whose reductive quotients are~$\SO^+_8(k_\F)$. Denote by~$\tau$ 
the unipotent irreducible cuspidal representation of~$\SO^+_8(k_\F)$, which we may inflate to 
either~$\J_{4,0}$ or~$\J_{0,4}$, and thus obtain irreducible cuspidal 
representations~$\pi=\cind_{\J_{4,0}}^\G\tau$ and~$\pi'=\cind_{\J_{0,4}}^\G\tau$. 
We have
\[
\IRed(\pi)=\IRed(\pi')=\{\!\{([\boI],2),([\boI],2)\}\!\}
\]
and the corresponding Galois parameter has the form
\[
\varphi_\pi=\varphi_{\pi'}=\boI\otimes(\st_3\oplus\st_1)\oplus
\om_0\otimes(\st_3\oplus\st_1).
\]
According to the discussion at the beginning of the section, the packet should 
contain four cuspidal representations, but~$\pi,\pi'$ are the only two cuspidal 
representations with this inertial reducibility set. This disparity comes from 
the difference between the group~$\G$ and the full orthogonal group~$\G^+=\O(\V)$. 
By Proposition~\ref{prop:orthogonalextension}\ref{prop:orth.i}, the 
representation~$\tau$ extends to a representation of~$\O^+_8(k_\F)$, in two ways, 
and inducing the inflation of these two representations from~$\J^+_{4,0}$ 
and~$\J^+_{0,4}$ to~$\G^+$, we obtain four inequivalent irreducible cuspidal 
representations, two restricting to~$\pi$ and the other two to~$\pi'$.

This example illustrates that, for even orthogonal groups, the expected number 
of representations in a packet should be interpreted for the \emph{full} orthogonal 
group, rather than for the special orthogonal group.
\end{example}

\begin{example}\label{ex:orthodd}
Let~$\G=\SO(\V)$ be the special orthogonal group of a~$5$-dimensional
orthogonal space~$\V$ with Witt index~$2$, and denote by$~\J_{2,0}$ the 
maximal compact subgroup whose reductive quotient~$\Gg_{2,0}$
has connected component~$\Gg^\so_{2,0}\simeq\SO^+_{4}(k_\F)\times
\SO_{1}(k_\F)$.

In the dual of the finite group~$\SO^+_{4}(k_\F)$ there is an element~$s$ 
with characteristic polynomial~$(X-1)^2(X+1)^2$ such that the Lusztig 
series~$\Ee(\SO^+_{4}(k_\F),s)$ contains a cuspidal
representation~$\tau$ (in fact, two such representations). The 
inflation of~$\tau$ has two extensions to~$\Gg_{2,0}$ and we denote by~$\l_\pi$ 
the inflation to~$\J_{2,0}$ of one such extension. 
Then~$\pi=\cind_{\J_{2,0}}^\G\l_\pi$ is an irreducible
cuspidal representation of~$\G$, for which
\[
\IRed(\pi)=\{\!\{([\boI],2),([\boI],1),([\om_1],2),([\om_1],1)\}\!\},
\]
and the corresponding Galois parameter~$\vphi_\pi$ has the form
\[
\varphi_\pi=\om\otimes\st_2
\oplus \om'\om_1\otimes\st_2,
\]
for some unramified characters~$\om,\om'$ of order at most~$2$. For
each choice of~$\om,\om'$, the corresponding packet should contain a
unique cuspidal representation, while we have constructed four such 
representations. Thus we have the irreducible cuspidal representations in a union 
of four~$\L$-packets. 
\end{example}

\begin{example}\label{ex:orth}
Let~$\G=\SO(\V)$ be the special orthogonal group of a~$20$-dimensional
orthogonal space~$\V$ with Witt index~$8$ and anisotropic part of
dimension~$4$. Denote by~$\J_{4,4}$
the maximal compact subgroup whose reductive quotient~$\Gg_{4,4}$
has connected component~$\Gg^\so_{4,4}\simeq\SO^-_{10}(k_\F)\times
\SO^-_{10}(k_\F)$.

In the dual of the finite group~$\SO^-_{10}(k_\F)$ there are
elements~$s_1,s_2$ with characteristic polynomials~$(X-1)^8(X+1)^2$
and~$(X-1)^2(X+1)^8$ respectively, and such that the corresponding
Lusztig series~$\Ee(\SO^-_{10}(k_\F),s_i)$ contains a cuspidal
representation~$\tau_i$ (in fact, two such representations). The
representation~$\tau_1\otimes\tau_2$ has two extensions to~$\Gg_{4,4}$
and we denote by~$\l_\pi$ the inflation to~$\J_{4,4}$ of one such
extension. Then~$\pi=\cind_{\J_{4,4}}^\G\l_\pi$ is an irreducible
cuspidal representation of~$\G$, for which
\[
\IRed(\pi)=\{\!\{([\boI],3),([\boI],1),([\om_1],3),([\om_1],1)\}\!\}.
\]
The corresponding Galois parameter~$\vphi_\pi$ has the form
\[
\varphi_\pi=\om\otimes(\st_5\oplus\st_3\oplus\st_1)\oplus\om\om_0
\oplus\om'\om_1\otimes(\st_5\oplus\st_3\oplus\st_1)\oplus\om'\om_2,
\]
for some unramified characters~$\om,\om'$ of order at most~$2$. For
each choice of~$\om,\om'$, the corresponding packet should contain~$8$
irreducible cuspidal representations. 

From the two choices for each of~$\tau_1,\tau_2$ above, and the two
choices of extension to~$\Gg_{4,4}$, we get~$8$
representations. However, we also get~$8$ more by exchanging the roles
of~$\tau_1,\tau_2$, and these also have the same inertial reducibility
(multi)set. However, each of these irreducible cuspidal representations 
has two extensions to the full orthogonal group~$\G^+$. Thus  
we in fact have the irreducible cuspidal representations in the union of 
four~$\L$-packets for the full orthogonal group~$\G^+$.

In this case, there are also~$16$ other representations of the split
special orthogonal group~$\H=\SO(\V')$, where~$\V'$ is
a~$20$-dimensional orthogonal space with Witt index~$10$, with the same inertial
reducibility set, obtained as follows. We denote by~$\J_{8,2}$ a
maximal compact subgroup of~$\H$ whose reductive quotient has connected
component~$\SO^+_{16}(k_\F)\times\SO^+_4(k_\F)$. In the duals of the
isotropic finite groups~$\SO^+_{16}(k_\F)$ and~$\SO^+_4(k_\F)$ there 
are elements~$s_1,s_2$ respectively, with characteristic
polynomials~$(X-1)^8(X+1)^8$
and~$(X-1)^2(X+1)^2$ respectively, such that the corresponding
Lusztig series contain cuspidal
representations~$\tau_1,\tau_2$ respectively (two such
representations in each series). The
representation~$\tau_1\otimes\tau_2$ has two extensions to the
reductive quotient of~$\J_{8,2}$ and, inflating and then inducing
to~$\G$, we obtain an irreducible cuspidal representation. Since we
can also inflate to~$\J_{2,8}$, we obtain~$16$ inequivalent 
representations in this way. Again, each of these representations extends in
two ways to the full orthogonal group~$\H^+$.
\end{example}

\begin{example}\label{ex:uniram}
Let~$\F/\F_\so$ be a ramified quadratic extension and let~$\G=\U(\V)$ be the 
unitary group of a~$14$-dimensional hermitian space~$\V$ with Witt index~$6$ 
and anisotropic part of dimension~$2$. Denote by~$\J_{0,6}$
the maximal compact subgroup whose reductive quotient~$\Gg_{0,6}$
has connected component~$\Gg^\so_{0,6}\simeq\SO^-_{2}(k_\F)\times
\Sp_{12}(k_\F)$.

We fix an irreducible self-dual monic polynomial~$P\in k_\F[X]$ of degree two. 
Then there are semisimple elements~$s_1,s_2$ in the dual groups 
of~$\SO^-_{2}(k_\F)$,~$\Sp_{12}(k_\F)$ respectively, with characteristic 
polynomials~$P(X),P(X)^6$ respectively, such that the corresponding Lusztig 
series contain unique irreducible cuspidal representations~$\tau_1,\tau_2$ 
respectively. Then there is a unique irreducible representation~$\l_\pi$ 
of~$\J_{0,6}$ inflated from a representation of~$\Gg_{0,6}$ 
containing~$\tau_1\otimes\tau_2$ (since~$\tau_1$ does not extend 
to~$\O^-_2(k_\F)$), and~$\pi=\cind_{\J_{0,6}}^\G\l_\pi$ is irreducible and cuspidal.
We have
\[
\IRed(\pi)=\{\!\{([\rho_P],5/2),([\rho_P],1)\}\!\},
\]
where~$\rho_P,\rho'_P$ are the self-dual irreducible cuspidal representations of~$\GL_2(\F)$ 
corresponding to~$P$, and the corresponding Galois parameter has the form
\[
\varphi_\pi=\varphi_P\otimes(\st_4\oplus\st_2)\oplus
\varphi'_P,
\]
where~$\varphi_P,\varphi'_P$ are the Galois parameters corresponding to~$\rho_P,\rho'_P$ 
respectively. The corresponding packet should contain a unique irreducible 
cuspidal representation, which is~$\pi$.

As in Example~\ref{ex:orth}, we also find an irreducible cuspidal representation 
of the~$14$-dimensional quasi-split ramified unitary group with the same inertial 
irreducibility (multi)set, by exchanging the characteristic polynomials of~$s_1,s_2$.
\end{example}

\def\cprime{$'$}


\end{document}